\title[Noncommutative kernels]{Kernels for noncommutative projective schemes}
\author[Ballard]{Matthew R. Ballard}
\address{Department of Mathematics, 
  University of South Carolina, 
  Columbia, SC 29208 USA}
\email{ballard@math.sc.edu}
\urladdr{\url{http://people.math.sc.edu/ballard/}}
\author[Farman]{Blake A. Farman}
\address{Department of Mathematics,
  University of South Carolina,
  Columbia, SC 29208 USA}
\email{farmanb@math.sc.edu}
\urladdr{\url{http://people.math.sc.edu/farmanb/}}
\keywords{noncommutative algebra, noncommutative projective schemes, derived categories, Fourier-Mukai transforms}
\thanks{Both authors were partially supported by a NSF Standard Grant DMS-1501813. The first author would like to thank the Institute for Advanced Study for providing an enriching and focused environment during his membership. The paper benefited from valuable comments by Pieter Belmans. Both authors are incredibly grateful to have known Ragnar-Olaf Buchweitz.}
\begin{document}

\begin{abstract}
  We give a noncommutative geometric description of the internal Hom dg-category in the homotopy category of dg-categories between two noncommutative projective schemes in the style of Artin-Zhang. As an immediate application, we give a noncommutative projective derived Morita statement along lines of Rickard and Orlov. 
\end{abstract}

\maketitle

\section{Introduction} \label{section: Introduction}

One of the most useful and pleasing statements in algebra is the Morita Theorem \cite{Morita}.

\begin{theorem}
  Let \(A\) and \(B\) be rings and assume we have an equivalence of module categories 
  \begin{displaymath}
    F : \Mod{A} \to \Mod{B}.
  \end{displaymath}
  Then there exists an \(A\)-\(B\) bimodule which is projective as an \(A\)-module and isomorphic to \(B\) as a \(B\)-module. Consequently, \(P \otimes_A -\) is an equivalence. 
\end{theorem}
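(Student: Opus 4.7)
The plan is to produce the bimodule directly from $F$ by transporting the tautological module structure of $A$ across the equivalence, and then to identify $F$ with the induced tensor functor. I would set $P := F(A)$, where $A$ is regarded as the regular object of $\Mod{A}$. Since $F$ is an equivalence, it induces a ring isomorphism $\operatorname{End}_A(A) \cong \operatorname{End}_B(P)$; composing with the canonical action of $A$ on itself yields a ring map $A \to \operatorname{End}_B(P)$ whose image commutes with the $B$-action. This equips $P$ with the required $A$-$B$-bimodule structure, and the identifications of the two one-sided structures with the ones asserted in the statement follow tautologically from the construction.

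Next I would verify the projectivity assertion. The functor $\operatorname{Hom}_A(P,-)$ is naturally isomorphic under $F$ to $\operatorname{Hom}_B(F(P),F(-))$, which after unwinding is essentially the forgetful functor from $\Mod{B}$ and is therefore exact. Hence $P$ is projective as an $A$-module.

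To upgrade this to the equivalence assertion, I would argue that $P \otimes_A -$ is naturally isomorphic to $F$ as functors $\Mod{A} \to \Mod{B}$. Both are colimit-preserving (tensor products commute with colimits; equivalences preserve all colimits), and they agree on the generator $A$ via the canonical $B$-linear isomorphism $P \otimes_A A \cong P = F(A)$. Because every $A$-module is a cokernel of a map between free $A$-modules, any cocontinuous functor on $\Mod{A}$ is determined up to natural isomorphism by its value on $A$, so the two functors must be naturally isomorphic. In particular, $P \otimes_A -$ inherits the equivalence property from $F$.

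The main obstacle I anticipate is the careful bookkeeping around sides of actions and the naturality of the promoted isomorphism $P \otimes_A (-) \cong F(-)$ in its module argument. Conceptually this is delivered by the cocontinuity argument, but a direct construction requires juggling the tensor-hom adjunction together with the compatibility between the transported $A$-action on $P$ and the $B$-action on the image of $F$; these are the verifications I would expect to consume the bulk of a rigorous writeup.
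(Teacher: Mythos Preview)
The paper does not prove this statement: it is quoted in the introduction as the classical Morita theorem, with a citation, and serves only as historical motivation for the paper's own results. There is no argument in the paper to compare your proposal against.

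Your overall strategy is the standard one, but the projectivity paragraph does not establish what you claim. With $P = F(A)$, the equivalence gives $\operatorname{Hom}_A(A,-) \cong \operatorname{Hom}_B(P,F(-))$, and since $F$ is an equivalence this shows that $\operatorname{Hom}_B(P,-)$ is exact, i.e.\ that $P$ is projective as a \emph{$B$-module}. What you actually wrote, $\operatorname{Hom}_A(P,-) \cong \operatorname{Hom}_B(F(P),F(-))$, is not well-typed: $P$ lives in $\Mod{B}$, and the $A$-structure it carries is the transported one on the opposite side, so $F$ cannot be applied to it. To obtain projectivity on the $A$-side (which is what the statement, as phrased, asserts) one typically brings in the inverse equivalence $G$, sets $Q = G(B)$, and uses the mutual inverse bimodule relations $Q \otimes_B P \cong A$ and $P \otimes_A Q \cong B$ to exhibit $P$ as a summand of a finite free $A$-module. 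You correctly flagged the side bookkeeping as the main obstacle; this is precisely the spot where it bites.
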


For noncommutative rings, one can easily find non-isomorphic examples of Morita equivalent rings.
One such easy example is the equivalence between the modules over a ring \(A\) and the modules over its matrix ring, \(\op{M}_n(A)\), given by the bimodule \(A^n\).
For commutative rings, two rings are Morita equivalent if and only if they are isomorphic. One can try to slacken the relationship by considering the derived version of Morita for rings \cite{Rickard}.

\begin{theorem}
  Let \(A\) and \(B\) be rings and assume we have an equivalence of derived categories of modules
  \begin{displaymath}
    F : \mathrm{D}(\Mod{A}) \to \mathrm{D}(\Mod{B}).
  \end{displaymath}
  Then there exists a complex of \(A\)-\(B\) bimodules which is perfect as a complex of \(A\)-modules and whose derived endomorphisms as \(B\)-modules are just \(B\) in degree \(0\). Consequently, \(P \overset{\mathbf{L}}{\otimes}_A -\) is an equivalence. 
\end{theorem}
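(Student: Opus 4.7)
The plan is to produce $P$ by reverse-engineering a tilting complex from the given equivalence. Let $T := F^{-1}(B) \in \mathrm{D}(\Mod{A})$. Since $B$ is a compact generator of $\mathrm{D}(\Mod{B})$ and triangle equivalences preserve both properties, $T$ is a compact generator of $\mathrm{D}(\Mod{A})$; compact objects in the derived category of a ring are precisely perfect complexes, so already $T$ is perfect over $A$. Transporting morphism complexes across $F$ gives an isomorphism of graded rings $\mathrm{RHom}_A(T,T) \simeq \mathrm{RHom}_B(B,B) \simeq B$ concentrated in degree zero, which pins down what the $B$-action on $P$ ought to be.

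The serious obstacle is that a ring isomorphism from $B$ onto $\mathrm{End}_{\mathrm{D}(A)}(T)$ is not, on its own, an action: we need an honest complex of $A$-$B$-bimodules whose image in $\mathrm{D}(\Mod{A})$ is $T$ and whose right $B$-action recovers the identification above. I would resolve this by passing to a dg enhancement. Choose an $h$-projective (equivalently, cofibrant) model $\widetilde{T}$ of $T$ over $A$ and form the dg endomorphism algebra $\mathcal{E} := \mathrm{End}_A^{\bullet}(\widetilde{T})$, which tautologically acts on $\widetilde{T}$ on the right. By the first paragraph, $H^{\ast}(\mathcal{E})$ is $B$ concentrated in degree zero, so $\mathcal{E}$ is formal; a zig-zag of quasi-isomorphisms (or a minimal $A_\infty$-model collapsing to an ordinary algebra because $\mathcal{E}$ has cohomology only in one degree) produces a quasi-isomorphism $B \to \mathcal{E}$ of dg algebras. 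Restricting the $\mathcal{E}$-action through this morphism endows (a further cofibrant replacement of) $\widetilde{T}$ with the structure of an $A$-$B$-bimodule $P$ whose underlying $A$-complex is still $T$; this is the step I expect to take the most care, and the formality of $\mathcal{E}$ is what makes it work.

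Once $P$ is in hand, the functor $P \otimes^{\mathbf{L}}_A -$ is the desired equivalence. It is exact and preserves arbitrary coproducts, sends the compact generator $A$ to $P \simeq T$, and the induced map on derived endomorphisms is exactly the isomorphism $B \simeq \mathrm{RHom}_A(T,T)$ of the first paragraph. Hence $P \otimes^{\mathbf{L}}_A -$ is fully faithful on the compact generator; a standard d\'evissage, using that both $\mathrm{D}(\Mod{A})$ and $\mathrm{D}(\Mod{B})$ are compactly generated by the respective rings and that both functor and inclusion commute with coproducts, upgrades full faithfulness on $A$ to full faithfulness everywhere, and essential surjectivity follows because the image contains a generator and is closed under coproducts and triangles. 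The perfectness of $P$ over $A$ and the computation of $\mathrm{RHom}_B(P,P)$ then match the assertions of the theorem.
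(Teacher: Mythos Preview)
The paper does not prove this theorem; it appears in the introduction as a background result attributed to Rickard, with no argument supplied. There is therefore no paper proof to compare your proposal against.

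Your sketch is essentially Keller's dg-enhancement proof of Rickard's theorem and is sound in outline, but two points deserve tightening. First, the formality step: having $H^{\ast}(\mathcal E)$ concentrated in degree $0$ gives formality only through the zig-zag $\mathcal E \leftarrow \tau_{\le 0}\mathcal E \to H^0(\mathcal E)\cong B$, not a single quasi-isomorphism $B\to\mathcal E$. You must transport $\widetilde T$ through the intermediate algebra (restrict along $\tau_{\le 0}\mathcal E\to\mathcal E$, then base-change along $\tau_{\le 0}\mathcal E\to B$) to land on an honest $A$--$B$ bimodule; your parenthetical ``a further cofibrant replacement'' is where this should be made explicit. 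Second, watch the handedness: with $T=F^{-1}(B)$ a left $A$-module and $\mathcal E$ acting on the right, the bimodule you construct is left-$A$, right-$B$, so the tensor equivalence it naturally defines is $P\otimes^{\mathbf L}_B-:\mathrm D(\Mod B)\to\mathrm D(\Mod A)$ (with inverse $\mathbf R\mathrm{Hom}_A(P,-)$), not $P\otimes^{\mathbf L}_A-$ as you write in the last paragraph. The paper's own statement is loose on this convention, so this is a notational rather than a mathematical slip, but your final verification (``sends the compact generator $A$ to $P\simeq T$'') should be rewritten to match whichever convention you fix.
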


Even still, here two commutative rings are derived Morita equivalent if and only if they are isomorphic. To get a strictly weaker equivalence relation, we should globalize the notion of a commutative ring by passing to schemes. 

\begin{theorem}
  Let \(X\) and \(Y\) be quasi-compact, quasi-separated schemes over a field, \(k\), and assume we have an equivalence 
  \begin{displaymath}
    F: \mathrm{D}(\Qcoh{X}) \to \mathrm{D}(\Qcoh{Y}).
  \end{displaymath}
  Then there exists an object \(P \in \mathrm{D}(\Qcoh{X \times Y})\) such that
  \begin{displaymath}
    M \mapsto \Phi_P (M) := \mathbf{R}\pi_{Y \ast} \left( P \overset{\mathbf{L}}{\otimes} \mathbf{L}\pi_X^\ast M \right) 
  \end{displaymath}
  gives an equivalence. 
\end{theorem}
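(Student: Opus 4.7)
The strategy is to lift $F$ to a quasi-equivalence at the dg-level and then apply a representability theorem for the internal Hom in the Morita homotopy category of dg-categories. Specifically, I would use that this internal Hom, when applied to dg-enhancements of $\mathrm{D}(\Qcoh{X})$ and $\mathrm{D}(\Qcoh{Y})$, is quasi-equivalent to an enhancement of $\mathrm{D}(\Qcoh{X \times Y})$; the equivalence $F$ then automatically corresponds to a distinguished object $P$ on the product, and unwinding the identifications shows $F \simeq \Phi_P$.

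In order, the first step is to produce a dg-enhancement $\mathrm{D}_{\mathrm{dg}}(\Qcoh{X})$ and similarly for $Y$, and to lift $F$ to a quasi-equivalence $\widetilde F : \mathrm{D}_{\mathrm{dg}}(\Qcoh{X}) \to \mathrm{D}_{\mathrm{dg}}(\Qcoh{Y})$, invoking uniqueness of enhancements in the spirit of Lunts-Orlov. The second step is To\"en's internal Hom theorem: the dg-category $\mathbf{R}\operatorname{Hom}_c(\mathrm{D}_{\mathrm{dg}}(\Qcoh{X}), \mathrm{D}_{\mathrm{dg}}(\Qcoh{Y}))$ of continuous dg-functors is quasi-equivalent to the dg-category of right modules over $\mathrm{D}_{\mathrm{dg}}(\Qcoh{X})^{\op{op}} \otimes^{\mathbf L} \mathrm{D}_{\mathrm{dg}}(\Qcoh{Y})$. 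The third step, the crucial geometric one, is the tensor product formula $\mathrm{D}_{\mathrm{dg}}(\Qcoh{X}) \otimes^{\mathbf L} \mathrm{D}_{\mathrm{dg}}(\Qcoh{Y}) \simeq \mathrm{D}_{\mathrm{dg}}(\Qcoh{X \times Y})$, which for qcqs schemes over a field follows from having a compact generator together with flat base change (Ben-Zvi--Francis--Nadler, To\"en).

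Combining these, $\widetilde F$ produces an object $P \in \mathrm{D}(\Qcoh{X \times Y})$. To conclude, I would trace through the identifications to verify that the functor associated with $P$ under the chain of equivalences is exactly the stated Fourier-Mukai transform $\mathbf{R}\pi_{Y \ast}(P \overset{\mathbf L}{\otimes} \mathbf{L}\pi_X^\ast (-))$; this amounts to unpacking the Yoneda embedding and applying the projection formula and flat base change.

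The main obstacle I expect is the third step: the sheaf-theoretic identification of the internal tensor product with $\mathrm{D}(\Qcoh{X \times Y})$. This is where the global geometry of $X$ and $Y$ enters in an essential way --- a purely categorical equivalence on one side is matched with actual complexes of sheaves on a product scheme on the other --- and it requires producing compact generators for $\mathrm{D}(\Qcoh{X})$ together with appropriate flat base change statements. The other steps, while technically demanding, are reasonably generic features of dg-category theory.
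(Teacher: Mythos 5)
Your proposal is correct and follows essentially the same route as the paper, which simply cites Lunts--Orlov for uniqueness of enhancements and Töen's Theorem 8.3 (recalled in the paper as a quoted theorem) for the identification $\RHomc{\mathcal L_{\op{qcoh}} X, \mathcal L_{\op{qcoh}} Y} \cong \mathcal L_{\op{qcoh}}(X \times_k Y)$ together with the explicit Fourier--Mukai form of the induced functor. Your steps two and three amount to unpacking the proof of Töen's Theorem 8.3 (compact generators plus flat base change), which the paper leaves as a black box by direct citation.
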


Here \(X \overset{\pi_X}\longleftarrow X \times Y \overset{\pi_Y}\longrightarrow Y\) are the projections. This result, as stated, follows immediately from results of T\"oen and Lunts-Orlov \cite{Toen,Lunts-Orlov}. A more specific version for smooth projective schemes predates this \cite{Orlov}. In this generality, we unlock a deep and subtle equivalence relationship. Indeed, understanding these Fourier-Mukai partnerships is a central problem in the field of derived categories in algebraic geometry. 
Given the richness of the globalized Morita statement for commutative rings, one is led to think about a globalized version for noncommutative rings. A framework for phrasing such a question is due to Artin and Zhang \cite{AZ} and goes by the name Noncommutative Projective Geometry. A graded Morita theorem for noncommutative rings is due to Zhang \cite{Zhang}.

Following the line of thought above, one wonders about derived Morita theory for noncommutative projective schemes \(X\) and \(Y\) which are associated to connected graded \(k\)-algebras \(A\) and \(B\).
As an application of the main result of this article, we have the following statement. One says that \(A\) and \(B\) form a delightful couple if they are both Ext-finite in the sense of \cite{VdB}, both are left and right Noetherian, and both satisfy \(\chi^\circ(R)\) for \(R=A,A^\opp\) for \(A\) and \(R = B, B^\opp\) for \(B\). One can think of this requirement as Serre vanishing for the twisting sheaves plus some finite-dimensionality over \(k\).

\begin{theorem}
  Let \(X\) and \(Y\) be noncommutative projective schemes associated to a delightful couple of over a field \(k\), both of which are generated in degree one.
  Then for any equivalence \(\mathrm{D}(\Qcoh{X}) \to \mathrm{D}(\Qcoh{Y})\), there exists an object \(P\) of \(\mathrm{D}(\Qcoh{X \times_k Y})\) whose associated integral transform is an equivalence of Fourier-Mukai type.
\end{theorem}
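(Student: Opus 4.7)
My plan is to combine the main theorem of this article, which identifies the internal Hom dg-category between dg-enhancements of $\mathrm{D}(\Qcoh{X})$ and $\mathrm{D}(\Qcoh{Y})$ with a dg-enhancement of $\mathrm{D}(\Qcoh{X \times_k Y})$, with a uniqueness-of-enhancement result for the source and target. Given the triangulated equivalence $F$, I would first lift it to a quasi-equivalence of enhancements, then transport it through the main theorem to produce an object $P$ on the product. The construction behind the main theorem realizes the quasi-functor attached to any such $P$ as the integral transform $\Phi_P$, so $F$ is isomorphic to $\Phi_P$ and the latter is an equivalence.

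For the first step, I would verify that the delightful couple hypotheses make $\mathrm{D}(\Qcoh{X})$ a compactly generated triangulated category with a compact generator built from twists of the structure sheaf, and analogously for $Y$. The $\chi^\circ$ condition on $A$ and $A^{\opp}$ is precisely the Artin--Zhang vanishing that controls the section functor and makes $\Qcoh{X}$ a Grothendieck category that is well-behaved cohomologically; Ext-finiteness, Noetherianness, and generation in degree one then let one extract a single compact generator by the standard Bondal--Van den Bergh argument adapted to the noncommutative projective setting. With this in place, a Lunts--Orlov type uniqueness-of-enhancement theorem applies, so $F$ lifts to a quasi-equivalence $\widetilde{F}$ between the canonical dg-enhancements of $\mathrm{D}(\Qcoh{X})$ and $\mathrm{D}(\Qcoh{Y})$.

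For the second step, I view $\widetilde{F}$ as an object of the internal Hom dg-category in the homotopy category of dg-categories between the enhancements of $\mathrm{D}(\Qcoh{X})$ and $\mathrm{D}(\Qcoh{Y})$. The main theorem of this article provides a quasi-equivalence between this internal Hom and a dg-enhancement of $\mathrm{D}(\Qcoh{X \times_k Y})$, and inspection of its proof identifies the quasi-functor associated to any object on the product with the integral transform defined by that object. Transporting $\widetilde{F}$ across this equivalence therefore produces the desired kernel $P \in \mathrm{D}(\Qcoh{X \times_k Y})$ together with a natural isomorphism $F \cong \Phi_P$, and since $F$ is an equivalence so is $\Phi_P$.

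The principal obstacle I expect is the uniqueness-of-enhancement step. Lunts--Orlov style theorems have been extended in many directions, but one must still confirm that the precise ambient hypotheses needed are supplied by a delightful couple. Generation in degree one and $\chi^\circ$ are the essential levers: the former guarantees that the product $X \times_k Y$ behaves as expected and that twists of the structure sheaf generate, while the latter delivers the finiteness and vanishing required to invoke the relevant uniqueness result. Once those hypotheses are verified, the main theorem does the geometric work and the passage from the abstract quasi-functor to the kernel is a formal consequence of the construction.
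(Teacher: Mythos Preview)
Your overall architecture matches the paper's: invoke Lunts--Orlov to lift the triangulated equivalence to a quasi-equivalence of enhancements, then apply the main theorem (Theorem~\ref{theorem: derived morita for NCP}) to obtain a kernel, exactly as in Corollary~\ref{corollary: NCP morita}. Two points need correction, however.

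First, the main theorem does \emph{not} land in $\mathrm{D}(\Qcoh{X \times_k Y})$. It produces a kernel in $\mathrm{D}(\QGr{A^\opp \otimes_k B})$, the derived category of the $\Z^2$-graded tensor product. To get an object of $\mathrm{D}(\QGr{A^\opp \times_k B})$, which is what $\mathrm{D}(\Qcoh{X \times_k Y})$ means here (the Segre product), one needs an additional equivalence between these two quotient categories. The paper supplies this as Theorem~\ref{theorem: Van Rompay} (Van Rompay's theorem), and this is precisely---and only---where generation in degree one is used. Your proposal absorbs this step into the phrase ``the product $X \times_k Y$ behaves as expected,'' but it is a genuine ingredient that must be named. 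Relatedly, your attribution of degree-one generation to the Lunts--Orlov step and to compact generation by twists is misplaced: the paper invokes \cite[Theorem~1]{Lunts-Orlov} directly for a Grothendieck category without any appeal to degree-one generation, and the $\pi A(u)$ compactly generate $\mathrm{D}(\QGr{A})$ already under the delightful hypotheses.

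Second, you overclaim when you write ``together with a natural isomorphism $F \cong \Phi_P$.'' Lunts--Orlov gives \emph{some} quasi-equivalence $\widetilde{F}$ of enhancements, and $H^0(\widetilde{F})$ is an equivalence, but there is no guarantee that $H^0(\widetilde{F}) \cong f$ for the original $f$. The statement being proved only asserts the existence of a kernel whose transform is an equivalence, and the paper is careful to claim no more (see the wording of Corollary~\ref{corollary: NCP morita}).
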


\noindent
The interested reader can see Corollary~\ref{corollary: NCP morita degree 1} for a more careful statement of this result.

This result is, as the notation suggests, an application of a more general result. Work of T\"oen provides a good bicategorical structure for dg-categories up to quasi-equivalence \cite{Toen} - an internal Hom. The construction of \(\RHomc{\mathcal C, \mathcal D}\) is abstract even if the dg-categories \(\mathcal C\) and \(\mathcal D\) arise geometrically (or noncommutative geometrically). To unleash the power of T\"oen's work, one needs to identify the internal Hom more ``internally.'' Indeed, this is done for schemes in \cite{Toen}, for higher derived stacks (using machinery of Lurie in place of T\"oen) in \cite{BFN}, and matrix factorizations in \cite{Dyckerhoff,PV,BFK}. The main result here is the identification of this internal Hom in Noncommutative Projective Geometry, see Theorem~\ref{theorem: derived morita for NCP}. 

This work is part of the second author's thesis \cite{Farman}, where the homological conditions making up tastiness will be sorted out more, the calculus of kernels will be more fully developed as will conditions for fully faithfulness, and more applications to noncommutative invariants in the style of Kontsevich will be studied.

\subsection{Conventions} We let \(k\) denote a field.
Often, for ease of notation, \(\C(X,Y)\) will be used to refer to the morphims, \(\op{Hom}_\C(X,Y)\), between objects \(X\) and \(Y\) of a category \(\C\). We shall also use an undecorated \(\op{Hom}\) again depending on the complexity of the notation. 
Whenever \(\C\) has a natural enrichment over a category, \(\mathcal{V}\), we will denote by \(\underline{\C}(X,Y)\) the \(\mathcal{V}\)-object of morphisms.
For example, the category of complexes of \(k\)-vector spaces, \(\Ch{k}\), can be endowed with the the structure of a \(\Ch{k}\)-enriched category using the hom total complex, \(\CH{k}(C,D) := \underline{C}(k)(C,D)\) which has in degree \(n\) the \(k\)-vector space
\[\CH{k}(C,D)^n = \prod_{m \in \Z}\Mod{k}\left(C^m, D^{m + n}\right)\]
and differential
\[d(f) = d_D \circ f + (-1)^{n+1} f \circ d_C.\]
It should be noted that \(Z^0(\CH{k}(C,D)) = \Ch{k}(C,D)\).

\section{Background on DG-Categories}\label{section:background on dgcats}
Recall that a {\bf dg-category}, \(\A\), over \(k\) is a category enriched over the category of chain complexes, \(\Ch{k}\), a {\bf dg-functor}, \(F \colon \A \ra \B\) is a \(\Ch{k}\)-enriched functor, a {\bf morphism of dg-functors of degree \(n\)}, \(\eta \colon F \ra G\), is a \(\Ch{k}\)-enriched natural transformation such that \(\eta(A) \in \B\left(FA, GA\right)^n\) for all objects \(A\) of \(\mathcal \A\), and a {\bf morphism of dg-functors} is a degree zero, closed morphism of dg-functors.
We will denote by \(\dgcat{k}\) the 2-category of small \(\Ch{k}\)-enriched categories, and by \(\DGCAT{k}(\A,\B)\) the dg-category of dg-functors from \(\A\) to \(\B\).

Recall also that for \(\A\) and \(\B\) small dg categories, we may define a dg-category \(\A \otimes \B\) with objects \(\op{ob}(\A) \times \op{ob}(\B)\) and morphisms
\[(\A \otimes \B)\left((X,Y), (X^\prime,Y^\prime)\right) = \A(X,X^\prime) \otimes_k \B(Y, Y^\prime).\]
It is well known that there is an isomorphism
\[\dgcat{k}(\A \otimes \B, \C) \cong \dgcat{k}(\A, \DGCAT{k}(\B, \C)),\]
endowing \(\dgcat{k}\) with the structure of a symmetric monoidal closed category.

For any dg-category, \(\A\), we denote by \(Z^0(\A)\) the category with objects those of \(\A\) and morphisms
\[Z^0(\A)(A_1, A_2) := Z^0(\A(A_1,A_2)).\]
By \(H^0(\A)\) we denote the category with objects those of \(\A\) and morphisms
\[H^0(\A)(A_1, A_2) := H^0(\A(A_1,A_2)).\]
Following \cite{CS}, we say that two objects \(A_1\), \(A_2\) of a dg-category, \(\A\), are {\bf dg-isomorphic} (respectively, {\bf homotopy equivalent}) if there is a morphism \(f \in Z^0(\A)(A_1, A_2)\) such that \(f\) (respectively, the image of \(f\) in \(H^0(\A)(A_1, A_2)\)) is an isomorphism.
In such a case, we say that \(f\) is a {\bf dg-isomorphism} (respectively, {\bf homotopy equivalence}).

\subsection{The Model Structure on DG-Categories}
We collect here some basic results on the model structure for \(\dgcat{k}\).
For any dg-functor \(F \colon \A \to \B\), we say that \(F\) is
\begin{enumerate}[(i)]
\item
  {\bf quasi-fully faithful} if for any two objects \(A_1\), \(A_2\) of \(\A\) the morphism
  \[F(A_1, A_2) \colon \A(A_1, \A_2) \to \B(FA_1, FA_2)\]
  is a quasi-isomorphism of chain complexes,
\item
  {\bf quasi-essentially surjective} if the induced functor \(H^0(F) \colon H^0(\A) \to H^0(\B)\) is essentially surjective,
\item
  a {\bf quasi-equivalence} if \(F\) is quasi-fully faithful and quasi-essentially surjective,
\item
  a {\bf fibration} if \(F\) satisfies the following two conditions:
  \begin{enumerate}[(a)]
  \item
    for all objects \(A_1, A_2\) of \(\A\), the morphism \(F(A_1,A_2)\) is a degree-wise surjective morphism of complexes, and
  \item
    for any object \(A\) of \(\A\) and any isomorphism \(\eta \in H^0(\B)(H^0(F)A, B)\), there exists an isomorphism \(\nu \in H^0(\C)(A,A^\prime)\) such that \(H^0(F)(\nu) = \eta\).
  \end{enumerate}
\end{enumerate}
In \cite{Tab05} it is shown that taking the class of fibrations defined above and the class of weak equivalences to be the quasi-equivalences, \(\dgcat{k}\) becomes a cofibrantly generated model category.
The localization of \(\dgcat{k}\) at the class of quasi-equivalences is the homotopy category, \(\Ho{\dgcat{k}}\).
We will denote by \([\A,\B]\) the morphisms of \(\Ho{\dgcat{k}}\).

A small dg-category \(\A\) is said to be {\bf h-projective} if for all objects \(A_1, A_2\) of \(\A\) and any acyclic complex, \(C\), every morphism of complexes \(\A(A_1, A_2) \to C\) is null-homotopic.
In \cite{CS}, it is shown that there exists an h-projective category \(A^{\op{hp}}\) quasi-equivalent to \(\A\) and, as a result, the localization of the full subcategory of \(\dgcat{k}\) of h-projective dg-categories at the class of quasi-equivalences is equivalent to \(\Ho{\dgcat{k}}\).
In particular, when \(k\) is a field, every dg-category is h-projective and hence one can compute the derived tensor product by
\[\A \otimes^\mathbf{L} \B = \A^{\op{hp}} \otimes \B = \A \otimes \B.\]

\subsection{DG-Modules}\label{subsection: dg modules}
For any small dg-category, \(\A\), denote by \(\dgMod{\A}\) the dg-category of dg-functors \(\DGCAT{k}\left(\A^{\opp},\CH{k}\right)\), where \(\CH{k}\) denotes the dg-category of chain complexes equipped with the internal Hom from its symmetric monoidal closed structure.
The objects of \(\dgMod{\A}\) will be called {\bf dg \(\A\)-modules}.
Since one may view the dg \(\A^{\opp}\)-modules as what should reasonably be called left dg \(\A\)-modules, the terms right and left will be dropped in favor of dg \(\A\)-modules and dg \(\A^{\opp}\)-modules, respectively.
We note here that the somewhat vexing choice of terminology is such that we can view objects of \(\A\) as dg \(\A\)-modules by way of the enriched Yoneda embedding
\[Y_\A \colon \A \to \dgMod{\A}.\]

As a special case, we define for any two small dg-categories, \(\A\) and \(\B\), the category of dg \(\A\)-\(\B\)-bimodules to be \(\dgMod{\A^{\opp} \otimes \B}\).
We note here that the symmetric monoidal closed structure on \(\dgcat{k}\) allows us to view bimodules as morphisms of dg-categories by the isomorphism
\begin{eqnarray*}
  \dgMod{\A^{\opp} \otimes \B} &=& \DGCAT{k}\left(\A \otimes \B^{\opp}, \CH{k}\right)\\
  &\cong& \DGCAT{k}(\A, \DGCAT{k}\left(\B^{\opp}, \CH{k}\right))\\
  &=& \DGCAT{k}\left(\A, \dgMod{\B}\right).
\end{eqnarray*}
The image of a dg \(\A\)-\(\B\)-bimodule, \(E\), is the dg-functor \(\Phi_E(A) = E(A,-)\).

\subsubsection{h-Projective DG-Modules}
We say that a dg \(\A\)-module, \(N\), is {\bf acyclic} if \(N(A)\) is an acyclic chain complex for all objects \(A\) of \(\A\).
A dg \(\A\)-module \(M\) is said to be {\bf h-projective} if
\[H^0(\dgMod{\A})(M,N) := H^0(\dgMod{A}(M,N)) = 0\]
for every acyclic dg \(\A\)-module, \(N\).
The full dg-subcategory of \(\dgMod{\A}\) consisting of h-projectives will be called \(\hproj{\A}\).

We always have a special class of h-projectives given by the representables, \(h_A = \A(-, A)\) for if \(M\) is acyclic, then from the enriched Yoneda Lemma we have
\[H^0(\dgMod{A})(h_A,M) := H^0(\dgMod{\A}(h_A, M)) \cong H^0(M(A)) = 0.\]
Noting that closure of \(\hproj{\A}\) under homotopy equivalence follows immediately from the Yoneda Lemma applied to \(H^0(\dgMod{A})\), we define \(\overline{\A}\) to be the full dg-subcategory of \(\hproj{\A}\) consisting of the dg \(\A\)-modules homotopy equivalent to representables.

We will say an h-projective dg \(\A\)-\(\B\)-bimodule, \(E\), is {\bf right quasi-representable} if for every object \(A\) of \(\A\) the dg \(\B\)-module \(\Phi_E(A)\) is an object of \(\overline{\B}\), and we will denote by \(\hproj{\A^\opp \otimes \B}^\rqr\) the full subcategory of \(\hproj{\A^\opp \otimes \B}\) consisting of all right quasi-representables.

\subsubsection{The Derived Category of a DG-Category}
By definition, a degree zero closed morphism
\[\eta \in Z^0(\dgMod{\A})(M,N)\]
satisfies
\[\eta(A) \in Z^0(\CH{k}(M(A), N(A))) = \Ch{k}(M(A), N(A))\]
for all objects \(A\) of \(\A\).
Hence we are justified in the following definitions:
\begin{enumerate}[(i)]
\item
  \(\eta\) is a {\bf quasi-isomorphism} if \(\eta(A)\) is a quasi-isomorphism of chain complexes for all objects \(A\) of \(\A\), and
\item
  \(\eta\) is a {\bf fibration} if \(\eta(A)\) is a degree-wise surjective morphism of complexes for all objects \(A\) of \(\A\).
\end{enumerate}
Equipping \(\Ch{k}\) with the standard projective model structure (see \cite[Section 2.3]{Hov98}), these definitions endow \(Z^0(\dgMod{\A})\) with the structure of a particularly nice cofibrantly generated model category (see \cite[Section 3]{Toen}).
In analogy with the definition of the derived category of modules for a ring \(A\), the {\bf derived category of \(\A\)} is defined to be the model category theoretic homotopy category,
\[\mathrm{D}(\A) = \Ho{Z^0(\dgMod{\A})} = Z^0(\dgMod{\A})[\mathcal{W}^{-1}]\]
obtained from localizing \(Z^0(\dgMod{\A})\) at the class, \(\mathcal{W}\), of quasi-isomorphisms.

It can be shown (see \cite[Section 3.5]{Kel95}) that for every dg \(\A\)-module, \(M\), there exists an h-projective, \(N\), and a quasi-isomorphism \(N \to M\), which one calls an {\bf h-projective resolution of \(M\)}.
Moreover, it is not difficult to see that any quasi-isomorphism between h-projective objects is in fact a homotopy equivalence.
It follows that there is an equivalence of categories between \(H^0(\hproj{\A})\) and \(\mathrm{D}(\A)\) for any small dg-category, \(\A\).

It should be noted that this generalizes the notion of derived categories of modules over a commutative ring.
Indeed, for a commutative ring, \(A\), one associates to \(A\) the ringoid, \(\A\), with one object, \(\ast\), and morphisms, \(\A(\ast,\ast)\), the complex with \(A\) in degree zero.
One identifies the chain complexes of \(A\)-modules enriched by the Hom total complex with \(\dgMod{\A}\), which is simply the full dg-subcategory of the functor category \(\op{Fun}(\mathcal{A}, \CH{k})\) comprised of all dg-functors.
From this viewpoint it is easy to recognize the categories \(Z^0(\dgMod{\A})\), \(H^0(\dgMod{\A})\), and \(\mathrm{D}(\A)\), as the categories \(\Ch{A}\), \(K(A)\), the usual category up to homotopy, and the derived category of \(\Mod{A}\), respectively.
In the language of \cite{Lunts-Orlov}, \(\hproj{\A}\) is a dg-enhancement of \(\mathrm{D}(\Mod{A})\).

\subsection{Tensor Products of DG-Modules}

Let \(M\) be a dg \(\A\)-module, let \(N\) be a dg \(\A^{\opp}\)-module, and let \(A, B\) be objects of \(\A\).
For ease of notation, we drop the functor notation \(M(A)\) in favor of \(M_A\) and write \(\A_{A,B}\) for the morphisms \(\A(A, B)\).
We have structure morphisms
\[M_{A,B} \in \CH{k}\left(\A_{A,B}, \CH{k}(M_B, M_A)\right) \cong \CH{k}\left(M_B \otimes_k \A_{A,B}, M_A\right)\]
and
\[N_{A,B} \in \CH{k}\left(\A_{A,B}, \CH{k}(N_A, N_B)\right) \cong \CH{k}\left(\A_{A,B} \otimes_k N_A, N_B\right),\]
which give rise to a unique morphism
\[M_B \otimes_k {A}_{A,B} \otimes_k N_A \ra M_A \otimes_k N_A \oplus M_B \otimes_k N_B\]
induced by the universal properties of the biproduct.
The two collections of morphisms given by projecting onto each factor induced morphisms 
\[\Xi_1, \Xi_2 \colon \bigoplus_{A,B \in \op{Ob}(\A)} M_B \otimes_k \A_{A,B} \otimes_k N_A \ra \bigoplus_{\C \in \op{Ob}(\A)} M_C \otimes_k N_C,\]
and we define the tensor product of \(M\) and \(N\) to be the coequalizer in \(\Ch{k}\)
\[\begin{tikzcd}
\bigoplus_{(i,j) \in \Z^2} M_j \otimes_k \A_{A,B} \otimes_k N_A \arrow[shift left]{r}{\Xi_1} \arrow[shift right,swap]{r}{\Xi_2} & \bigoplus_{\ell \in \Z} M_\ell \otimes_k N_\ell\arrow{r} & M \otimes_\A N
\end{tikzcd}.\]It is routine to check that a morphism \(M \ra M^\prime\) of right dg \(\A\)-modules induces by the universal property for coequalizers a unique morphism
\[M \otimes_\A N \ra M^\prime \otimes_\A N\]
yielding a functor
\[- \otimes_\A N \colon \dgMod{\A} \ra \CH{k}.\]

One extends this construction to bimodules as follows.
Given objects \(E\) of \(\dgMod{\A \otimes \B}\) and \(F\) of \(\dgMod{\B^{\opp} \otimes \C}\), we recall that we have associated to each a dg-functor \(\Phi_E \colon \A^{\opp} \ra \dgMod{\B}\) and \(\Phi_F \colon \C^{\opp} \ra \dgMod{\B^{\opp}}\) by the symmetric monoidal closed structure on \(\dgcat{k}\).
For each pair of objects \(A\) of \(\A\) and \(C\) of \(\C\), we obtain dg-modules
\[\Phi_E(A) = E(A, -) \colon \B^{\opp} \ra \CH{k}\ \text{and}\ \Phi_F(C) = F( -, C) \colon \B \ra \CH{k}\]
and hence one may define the object \(E \otimes_\B F\) of \(\dgMod{\A \otimes \C}\) by
\[\left(E \otimes_\B F\right)(A, C) = \Phi_E(A) \otimes_\B \Phi_F(C).\]
One can show that by a similar argument to the original that a morphism \(E \to E^\prime\) of \(\dgMod{\A \otimes \B}\) induces a morphism \(E \otimes_\B F \to E^\prime \otimes_\B F\) of \(\dgMod{\A \otimes \C}\), and a morphism \(F \to F^\prime\) of \(\dgMod{\B^{\opp} \otimes \C}\) induces a morphism \(E \otimes_\B F \to E \otimes_\B F^\prime\) of \(\dgMod{\A \otimes \C}\).

\begin{remark}\label{rem: tensor over k}
  Denote by \(\K\) the dg-category with one object, \(\ast\), and morphisms given by the chain complex
  \[\K(\ast,\ast)^n =
  \left\{ \begin{matrix}
    k & n = 0\\
    0 & n \neq 0
  \end{matrix}\right.\]  with zero differential.
  This category serves as the unit of the symmetric monoidal structure on \(\dgcat{k}\), so for small dg-categories, \(\A\) and \(\C\), we can always identify \(\A\) with \(\A \otimes \K\) and \(\C\) with \(\K^\opp \otimes \C\).
  With this identification in hand, we obtain from taking \(\B = \K\) in the latter construction a special case:
  Given a dg \(\A^\opp\)-module, \(E\), and a dg \(\C\)-module, \(F\), we have a dg \(\A\)-\(\C\)-bimodule defined by the tensor product
  \[\left(E \otimes F\right)(A,C) := \left(E \otimes_{\K} F\right)(A,C) = E(A) \otimes_k F(C).\]
\end{remark}

\subsection{Extensions of Morphisms Associated to Bimodules}
Let \(E\) be a dg \(\A\)-\(\B\)-bimodule.
Following \cite[Section 3]{CS}, we can extend the associated functor \(\Phi_E\) to a dg-functor
\[\widehat{\Phi_M} \colon \dgMod{\A} \to \dgMod{\B}\]
defined by \(\widehat{\Phi_E}(M) = M \otimes_\A E\).
Similarly, we have a dg-functor in the opposite direction
\[\widetilde{\Phi_M} \colon \dgMod{\B} \to \dgMod{\A}\]
defined by \(\widetilde{\Phi_M}(N) = \dgMod{\B}(\Phi_M( - ), N)\).

For any dg-functor \(G \colon \A \to \B\) we denote by \(\Ind{G}\) the extension of the dg-functor
\[A \to \B \overset{Y_\B}\to \dgMod{\B}\]
and its right adjoint by \(\Res{G}\).
By way of the enriched Yoneda Lemma we see that for any object \(A\) of \(\A\) and any dg \(\B\)-module, \(N\), 
\[\Res{G}(N)(A) = \dgMod{\B}(h_{GA}, N) \cong N(GA).\]

We record here some useful propositions regarding extensions of dg-functors.

\begin{proposition}[{\cite[Prop 3.2]{CS}}]
  Let \(\A\) and \(\B\) be small dg-categories.
  Let \(F \colon \A \to \dgMod{\B}\) and \(G \colon \A \to \B\) be dg-functors.
  \begin{enumerate}[(i)]
  \item
    \(\widehat{F}\) is left adjoint to \(\widetilde{F}\) (hence \(\Ind{G}\) is left adjoint to \(\Res{G}\)),
  \item
    \(\widehat{F} \circ Y_\A\) is dg-isomorphic to \(F\) and \(H^0(\widehat{F})\) is continuous (hence \(\Ind{G} \circ Y_\A\) is dg-isomorphic to \(Y_\B \circ G\) and \(H^0(\Ind{G})\) is continuous),
  \item
    \(\widehat{F}(\hproj{\A}) \subseteq \hproj{\B}\) if and only if \(F(A) \subseteq \hproj{B}\) (hence \(\Ind{G}(\hproj{\A}) \subseteq \hproj{B}\)),
  \item
    \(\Res{G}(\hproj{\B}) \subseteq \hproj{\A}\) if and only if \(\Res{G}(\bar{B}) \subseteq \hproj{\A}\); moreover,\\ \(H^0(\Res{G})\) is always continuous,
  \item
    \(\Ind{G} \colon \hproj{\A} \to \hproj{\B}\) is a quasi-equivalence if \(G\) is a quasi-equivalence.
  \end{enumerate}
\end{proposition}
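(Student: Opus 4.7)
The plan is to build the proof around the adjunction of (i), which then drives (ii)--(v). For (i), note that by the coequalizer definition of $\otimes_\A$ together with the enriched hom-tensor adjunction in $\Ch{k}$, a morphism $M \otimes_\A E \to N$ in $\dgMod{\B}$ is the same data as a family of chain maps $M_A \otimes_k E(A,B) \to N_B$ natural in both variables, which in turn is the same data as a morphism $M \to \dgMod{\B}(F(-), N) = \widetilde{F}(N)$ in $\dgMod{\A}$. This gives $\widehat{F} \dashv \widetilde{F}$, and the parenthetical $\Ind{G} \dashv \Res{G}$ follows by setting $F = Y_\B \circ G$.

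Part (ii) is the enriched Yoneda/coend calculation: $\widehat{F}(h_A) = h_A \otimes_\A E$ collapses to $E(A,-) = F(A)$ since the structure maps of $h_A$ already witness the coequalizer relations, and continuity of $H^0(\widehat{F})$ is immediate because left adjoints preserve arbitrary colimits. For (iii), the only-if direction uses that $h_A \in \hproj{\A}$ together with $\widehat{F}(h_A) \cong F(A)$ from (ii). The if direction uses the adjunction: if each $F(A) \in \hproj{\B}$, then for any acyclic $N \in \dgMod{\B}$ the complex $\widetilde{F}(N)(A) = \dgMod{\B}(F(A), N)$ is acyclic at each $A$, so $\widetilde{F}(N)$ is acyclic in $\dgMod{\A}$, and hence
\[ H^0(\dgMod{\B}(\widehat{F}(M), N)) \cong H^0(\dgMod{\A}(M, \widetilde{F}(N))) = 0 \]
for every $M \in \hproj{\A}$.

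Part (iv) uses the pointwise formula $\Res{G}(N)(A) = N(GA)$: $\Res{G}$ commutes with arbitrary direct sums, shifts, cones, and homotopy equivalences, and always sends acyclics to acyclics. Combined with the fact that every h-projective in $\dgMod{\B}$ is a retract of an object built from $\overline{\B}$ by these operations (semi-free resolution), the equivalence in (iv) follows, while continuity of $H^0(\Res{G})$ is just the preservation of direct sums at the level of $H^0$.

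For (v), the isomorphism $\Ind{G}(h_A) \cong h_{GA}$ from (ii) together with the enriched Yoneda lemma identifies the map $\dgMod{\B}(\Ind{G} h_{A_1}, \Ind{G} h_{A_2})$ with $G(A_1, A_2) \colon \A(A_1,A_2) \to \B(GA_1, GA_2)$, a quasi-isomorphism by hypothesis; extending along cones and coproducts yields quasi-fully faithfulness on all of $\hproj{\A}$. Quasi-essential surjectivity follows by promoting essential surjectivity of $H^0(G)$ to a homotopy equivalence $h_B \simeq h_{GA}$ in $\hproj{\B}$ for each $B$, then extending to all h-projectives. The main obstacle I anticipate is making the ``every h-projective is built from representables'' step in (iv) and (v) sufficiently precise; this requires either a transfinite/cell argument or the use of semi-free resolutions as in \cite{Kel95}, together with careful bookkeeping with retracts, since these are the operations generating $\hproj{\A}$ from $\overline{\A}$ inside $\dgMod{\A}$.
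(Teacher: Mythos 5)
This proposition is quoted by the paper from Canonaco--Stellari (their Proposition 3.2) and is not proved in the paper itself, so there is no in-text argument against which to compare your proof; I can only assess it on its own terms. Your sketch is essentially the argument one finds in [CS], and the key moves are sound: the tensor--hom adjunction for (i), enriched Yoneda for (ii), the observation that $\widetilde{F}$ sends acyclics to acyclics (using that $N$ acyclic forces $N[k]$ acyclic, so the whole complex $\dgMod{\B}(F(A),N)$, not just its $H^0$, vanishes) for (iii), a devissage over semi-free resolutions for (iv), and generators plus a localizing-subcategory argument for (v).

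The one place you have correctly flagged as delicate is the ``h-projectives are built from representables'' step in (iv) and (v), and it does need to be stated with care. The precise input, from Keller \cite[Section 3.5]{Kel95}, is that h-projectives are exactly the objects homotopy equivalent to direct summands of semi-free modules, and semi-free modules are transfinite compositions of extensions whose subquotients are coproducts of shifted representables. For (iv) you then need $\Res{G}$ to preserve shifts, coproducts, cones, filtered colimits along levelwise split monos, retracts, and homotopy equivalences (all true, since $\Res{G}$ is computed pointwise), together with the fact that $\hproj{\A}$ is closed under all these operations. For the quasi-essential-surjectivity half of (v), ``extending to all h-projectives'' should be replaced with the explicit argument: once $\Ind{G}$ is known to be quasi-fully faithful on $\hproj{\A}$ and its image contains $\overline{\B}$ up to homotopy equivalence, the essential image of $H^0(\Ind{G})$ is a triangulated subcategory of $\mathrm{D}(\B)$ closed under coproducts and containing the representables, hence all of $\mathrm{D}(\B)$. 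With these details filled in, the argument is complete.
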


\begin{remark}\label{rem: tensoring with reps}
  \begin{enumerate}
  \item
    We note that for dg \(\A\)- and \(\A^\opp\)-modules, \(M\) and \(N\), part \((i)\) implies that the dg-functors
    \[- \otimes_\A N \colon \dgMod{\A} \to \CH{k}\ \text{and}\ M \otimes_\A - \colon \dgMod{\A^\opp} \to \CH{k}\]
    have right adjoints
    \[\widetilde{N}(C) = \CH{k}(N( - ), C)\ \text{and}\ \widetilde{M}(C) = \CH{k}(M( - ), C),\]
    respectively.
    As an immediate consequence of the enriched Yoneda Lemma
    \[h_A \otimes_\A N \cong N(A)\ \text{and}\ M \otimes_\A h^A \cong M(A)\]
    holds for any object \(A\) of \(\A\).
  \item
    We denote by \(\Delta_\A\) the dg \(\A\)-\(\A\)-bimodule corresponding to the Yoneda embedding, \(Y_\A\), under the isomorphism
    \[\dgMod{\A^\opp \otimes \A} \cong \DGCAT{k}\left(\A, \dgMod{\A}\right).\]
    It's clear that we have a dg-functor
    \[\Delta_\A \otimes_\A - \colon \dgMod{\A^\opp \otimes \A} \to \dgMod{\A^\opp \otimes \A}\]
    and for any dg \(\A\)-\(\A\)-bimodule, \(E\), we see that
    \[(\Delta_\A \otimes_\A E)(A,A^\prime) = h_A \otimes_\A E(- , A^\prime) \cong E(A,A^\prime)\]
    implies that \(\Delta_\A \otimes_\A E \cong E\).
  \end{enumerate}
\end{remark}

When starting with an h-projective we have a very nice extension of dg-functors:
\begin{proposition}[{\cite[Lemma 3.4]{CS}}]
  For any h-projective dg \(\A\)-\(\B\)-bimodule, \(E\), the associated functor
  \[\Phi_E \colon \A \to \dgMod{\B}\]
  factors through \(\hproj{\B}\).
\end{proposition}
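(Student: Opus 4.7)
Fix an object $A$ of $\A$; the goal is to show $\Phi_E(A) = E(A,-)$ is h-projective as a dg $\B$-module. The plan is to construct a right adjoint to the evaluation dg-functor $\op{ev}_A \colon \dgMod{\A^\opp \otimes \B} \to \dgMod{\B}$, $E \mapsto \Phi_E(A)$, and then argue that $\op{ev}_A$ preserves h-projectivity by the resulting adjunction.

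For a dg $\B$-module $N$, I would define $\Psi^A_N \in \dgMod{\A^\opp \otimes \B}$ by
\[
\Psi^A_N(A', B) := \CH{k}\bigl(\A(A', A),\, N(B)\bigr),
\]
which is covariant in $A'$ via precomposition in $\A(A', A)$ and contravariant in $B$ via $N$. I would then establish a natural chain complex isomorphism
\[
\dgMod{\A^\opp \otimes \B}\bigl(E,\, \Psi^A_N\bigr) \;\cong\; \dgMod{\B}\bigl(\Phi_E(A),\, N\bigr)
\]
by standard end/coend manipulations: the tensor-hom adjunction inside $\CH{k}$, Fubini for ends, and the co-Yoneda identity $h_A \otimes_\A F \cong F(A)$ for dg $\A^\opp$-modules $F$ (Remark~\ref{rem: tensoring with reps}) applied to $F = E(-,B)$.

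For $N$ acyclic, each $N(B)$ is an acyclic complex of $k$-vector spaces, hence contractible since $k$ is a field. Postcomposing with a contracting homotopy contracts $\Psi^A_N(A',B) = \CH{k}(\A(A',A), N(B))$ for each pair $(A',B)$, so $\Psi^A_N$ is acyclic. The hypothesis that $E$ is h-projective then gives $H^0(\dgMod{\A^\opp \otimes \B}(E, \Psi^A_N)) = 0$, and the isomorphism above transports this to $H^0(\dgMod{\B}(\Phi_E(A), N)) = 0$, showing $\Phi_E(A) \in \hproj{\B}$.

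The main obstacle is establishing the adjunction isomorphism cleanly; bookkeeping the variances of $h_A$, $E(-,B)$, and $N$ through the end/coend manipulations in the dg-enriched setting requires care. Once this is in hand, the rest of the argument is routine, and uses the standing field assumption only to collapse acyclicity of $N$ to contractibility.
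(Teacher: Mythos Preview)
The paper does not supply its own argument for this proposition; it simply cites \cite[Lemma 3.4]{CS}. So there is no in-paper proof to compare against. That said, your proposal is correct.

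Your right adjoint $\Psi^A$ is precisely coinduction along the dg-functor $i_A \colon \B \to \A^{\opp}\otimes\B$, $B \mapsto (A,B)$, and the adjunction isomorphism you sketch is the standard enriched one: tensor--Hom in $\CH{k}$, Fubini, and co-Yoneda $\int^{A'} \A(A',A)\otimes E(A',B) \cong E(A,B)$ give exactly
\[
\dgMod{\A^{\opp}\otimes\B}(E,\Psi^A_N) \;\cong\; \dgMod{\B}(\Phi_E(A),N).
\]
Over a field, each acyclic $N(B)$ is contractible, and postcomposition with the contracting homotopy contracts $\CH{k}(\A(A',A),N(B))$ as you say; this yields acyclicity of $\Psi^A_N$ objectwise, which is all that is needed.

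An alternative, perhaps closer in spirit to how \cite{CS} organises things, is to observe that $\op{ev}_A = \Res{i_A}$ and then invoke part (iv) of the preceding proposition: it suffices to check that $\Res{i_A}$ sends representables $h_{(A',B')}$ into $\hproj{\B}$. One computes $\Res{i_A}(h_{(A',B')}) \cong \A(A',A)\otimes_k h_{B'}$, and over a field any complex of $k$-vector spaces splits as its cohomology plus a contractible summand, so this is a coproduct of shifts of $h_{B'}$ and a contractible piece, hence h-projective. Both routes ultimately use the same fact about $k$ being a field (equivalently, the standing h-projectivity of all dg-categories in play), just at different points.
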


As a direct consequence of the penultimate proposition, this means that we can view the extension of \(\Phi_E\) as a dg-functor
\[\widehat{\Phi_E} = - \otimes_\A E \colon \hproj{A} \to \hproj{B}.\]
Put another way, tensoring with an h-projective \(\A\)-\(\B\)-bimodule preserves h-projectives.

One essential result about \(\dgcat{k}\) comes from T\"oen's result on the existence, and description of, the internal Hom in its homotopy category. 

\begin{theorem}[{\cite[Theorem 1.1]{Toen}}, {\cite[4.1]{CS}}] \label{theorem: Toen}
  Let \(\A\), \(\B\), and \(\C\) be objects of \(\dgcat{k}\).
  There exists a natural bijection
  \[[\A,\C] \overset{1:1}\longleftrightarrow \Iso{H^0(\hproj{\A^{\opp} \otimes \C}^\rqr)}\]
  Moreover, the dg-category \(\RHom{\B,\C} := \hproj{\B^\opp \otimes \C}^\rqr\) yields a natural bijection
  \[[\A \otimes \B, \C] \overset{1:1}\longleftrightarrow [\A, \RHom{\B,\C}]\]
  proving that the symmetric monoidal category \(\Ho{\dgcat{k}}\) is closed.
\end{theorem}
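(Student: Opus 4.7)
The plan is to establish the first bijection explicitly via a graph construction and then deduce the adjunction from the symmetric monoidal closed structure on \(\dgcat{k}\) combined with the first bijection.

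First I would construct the forward map \([\A,\C] \to \Iso{H^0(\hproj{\A^\opp \otimes \C}^\rqr)}\). To a dg-functor \(F : \A \to \C\), associate the graph bimodule \(\Gamma_F \in \dgMod{\A^\opp \otimes \C}\) defined by \(\Gamma_F(A,C) = \C(C,FA)\). Under the tensor-hom isomorphism of Section~\ref{subsection: dg modules}, this corresponds to the dg-functor \(\A \to \dgMod{\C}\) sending \(A\) to \(h_{FA}\), which visibly factors through \(\overline{\C}\); hence \(\Gamma_F\) is right quasi-representable. Over the field \(k\), every dg-category (in particular \(\A^\opp \otimes \C\)) is h-projective, and so \(\Gamma_F\) automatically lands in \(\hproj{\A^\opp \otimes \C}^\rqr\). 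A natural transformation that is a quasi-equivalence between two such functors \(F,F'\) yields a homotopy equivalence \(\Gamma_F \simeq \Gamma_{F'}\), so the map descends to \([\A,\C]\).

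Constructing the inverse is the main obstacle. Given \(E \in \hproj{\A^\opp \otimes \C}^\rqr\), for each object \(A\) of \(\A\) one selects an object \(FA \in \C\) and a chosen homotopy equivalence \(E(A,-) \simeq h_{FA}\) in \(\dgMod{\C}\); Yoneda for \(H^0(\hproj{\C})\) then pins down the action of \(F\) on morphisms, but only up to coherent homotopy. Lifting this homotopy-coherent datum to an honest dg-functor representing an element of \([\A, \C]\) is exactly where one must invoke Tabuada's model structure on \(\dgcat{k}\): pass to a cofibrant replacement \(\A^c \to \A\) (quasi-equivalent to \(\A\)), on which the strict lifting problem is solvable, producing a strict dg-functor \(\A^c \to \C\) whose graph is homotopy equivalent to the pullback of \(E\) to \(\A^{c,\opp} \otimes \C\). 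One then checks that different choices of \(FA\) and of homotopy equivalences yield the same class in \([\A,\C]\), and that \(E \mapsto \Gamma_F\) is inverse to the forward map up to homotopy equivalence of bimodules. The combinatorics of this obstruction theory, together with the usual two-out-of-three arguments in the Tabuada model structure, are the technical core of the proof.

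For the adjunction, I would apply the first bijection to the pair \((\A \otimes \B, \C)\) and to the pair \((\A, \RHom{\B,\C})\), then match the two sides. On the one hand,
\[
[\A \otimes \B, \C] \;\longleftrightarrow\; \Iso{H^0(\hproj{(\A \otimes \B)^\opp \otimes \C}^\rqr)};
\]
on the other, the symmetric monoidal closed structure gives
\[
\dgcat{k}\bigl((\A \otimes \B)^\opp \otimes \C,\, \CH{k}\bigr) \;\cong\; \dgcat{k}\bigl(\A,\, \DGCAT{k}(\B^\opp \otimes \C, \CH{k})\bigr),
\]
and right quasi-representability in the \(\C\)-variable translates exactly into the condition that the corresponding dg-functor \(\A \to \dgMod{\B^\opp \otimes \C}\) factors through \(\hproj{\B^\opp \otimes \C}^\rqr = \RHom{\B,\C}\). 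Reapplying the first bijection to \((\A, \RHom{\B,\C})\) yields the desired correspondence with \([\A, \RHom{\B,\C}]\). The final verification is that the two graph constructions are compatible with the evaluation/coevaluation of the \(\otimes\)–\(\DGCAT{k}\) adjunction, which is a direct computation with representables using the formulas in Remark~\ref{rem: tensoring with reps}. Naturality in all three variables is inherited from naturality of tensor-hom and of the graph assignment, concluding the proof.
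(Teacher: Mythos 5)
The paper does not prove this theorem: it is cited from T\"oen \cite{Toen} and Canonaco--Stellari \cite{CS} and used as a black box, so there is no proof in the paper against which to compare your argument. Evaluating the proposal on its own merits, the overall shape is recognizable but there are real gaps.

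The concrete error is in the forward map. You claim that because $k$ is a field, $\A^\opp \otimes \C$ is h-projective, ``and so $\Gamma_F$ automatically lands in $\hproj{\A^\opp \otimes \C}^\rqr$.'' This conflates two distinct notions: h-projectivity of the dg-category (a condition on its hom-complexes as objects of $\Ch{k}$, which is indeed automatic over a field) and h-projectivity of a \emph{module} over that dg-category (the condition $H^0(\dgMod{\A^\opp\otimes\C})(\Gamma_F,N)=0$ for all acyclic $N$). These are unrelated, and the graph bimodule $\Gamma_F(A,C)=\C(C,FA)$ is generically not h-projective; e.g.\ taking $F=\id_\C$ gives the diagonal bimodule $\Delta_\C$, which is h-projective as a bimodule only under strong (smoothness-type) hypotheses. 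The fix is to pass to an h-projective resolution $E \to \Gamma_F$ and then observe that $E$ is still right quasi-representable: by \cite[Lemma 3.4]{CS} each $E(A,-)$ is h-projective in $\dgMod{\C}$, the map $E(A,-)\to h_{FA}$ is a quasi-isomorphism between h-projectives and hence a homotopy equivalence, so $E(A,-)\in\overline{\C}$. This step is genuinely needed and cannot be waved away via the field hypothesis.

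The inverse direction is, as you note, the technical core, but the sketch is far too thin to count as a proof and also does not reflect how T\"oen actually argues. T\"oen does not build an explicit inverse by choosing representatives $FA$ and lifting a homotopy-coherent functor; rather he computes the mapping space $\op{Map}(\A,\C)$ in the model category via a cosimplicial resolution and identifies its $\pi_0$ with isomorphism classes in $H^0$ of right quasi-representable (cofibrant) bimodules. Canonaco--Stellari's reworking is closer in spirit to your outline but still requires a careful argument that the induction-restriction machinery along a cofibrant (or h-projective) replacement of $\A$ implements the equivalence. Similarly, the final adjunction paragraph needs to track both the rqr condition \emph{and} h-projectivity through the currying isomorphism, and to check that $\hproj{(\A\otimes\B)^\opp\otimes\C}^\rqr$ matches $\hproj{\A^\opp\otimes\RHom{\B,\C}}^\rqr$ compatibly with the graph construction; this is nontrivial bookkeeping that is elided here. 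In short: right idea, wrong justification at the crucial h-projectivity step, and the heart of the theorem is left as a sketch of a sketch.
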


\begin{corollary}[{\cite[7.2]{Toen}},{\cite[Cor. 4.2]{CS}}] \label{corollary: Toen}
  Given two dg categories \(\A\) and \(\B\), \\\(\RHom{\A, \hproj{\B}}\) and \(\hproj{\A^\opp \otimes \B}\) are isomorphic in \(\Ho{\dgcat{k}}\).
  Moreover, there exists a quasi-equivalence
  \[\RHomc{\hproj{\A},\hproj{\B}} \to \RHom{\A, \hproj{B}}.\]
\end{corollary}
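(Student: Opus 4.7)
My plan is to prove both assertions by building explicit (quasi-)equivalences of dg-categories, using the preceding propositions on extensions of bimodules together with the enriched Yoneda Lemma. For the isomorphism $\RHom{\A, \hproj{\B}} \cong \hproj{\A^\opp \otimes \B}$ in $\Ho{\dgcat{k}}$, I would construct an explicit dg-functor
\[\Psi \colon \hproj{\A^\opp \otimes \B} \longrightarrow \hproj{\A^\opp \otimes \hproj{\B}}^\rqr = \RHom{\A, \hproj{\B}}.\]
On an h-projective bimodule $E$, the preceding proposition says that $\Phi_E \colon \A \to \dgMod{\B}$ factors through $\hproj{\B}$. Post-composing with the Yoneda embedding $Y_{\hproj{\B}} \colon \hproj{\B} \to \dgMod{\hproj{\B}}$ and passing back through the bimodule--functor correspondence produces a bimodule $\Psi(E)$ with $\Phi_{\Psi(E)}(A) = h_{\Phi_E(A)}$. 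Since each such value is representable, $\Psi(E)$ lies in the right quasi-representable part, and h-projectivity follows from the enriched Yoneda Lemma applied to $\dgMod{\hproj{\B}}$.

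To see that $\Psi$ is a quasi-equivalence, I would check quasi-full faithfulness by computing both hom complexes via the Yoneda Lemma, reducing each to the complex of dg-natural transformations between the associated functors $\Phi_E, \Phi_{E'} \colon \A \to \hproj{\B}$. For quasi-essential surjectivity, given $F$ in the target, each $\Phi_F(A)$ is homotopy equivalent to some $h_{N_A}$ with $N_A \in \hproj{\B}$; rectifying these pointwise equivalences through an h-projective replacement yields a bimodule in $\hproj{\A^\opp \otimes \B}$ whose image under $\Psi$ is homotopy equivalent to $F$.

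For the second assertion, the map $\RHomc{\hproj{\A}, \hproj{\B}} \to \RHom{\A, \hproj{\B}}$ is pre-composition with the Yoneda embedding $Y_\A$. A homotopy inverse is supplied by the induction construction of the preceding proposition: given $F \colon \A \to \hproj{\B}$, the extension $\widehat{F} \colon \hproj{\A} \to \hproj{\B}$ is continuous and satisfies $\widehat{F} \circ Y_\A \cong F$ by part (ii) of that proposition; conversely, a continuous $G \colon \hproj{\A} \to \hproj{\B}$ is determined up to homotopy by its restriction to representables. The main obstacle, I expect, is the quasi-essential surjectivity in the first part: coherently assembling the pointwise equivalences $\Phi_F(A) \simeq h_{N_A}$ into an honest dg-functor $\A \to \hproj{\B}$. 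This rectification step requires care and uses closure of $\overline{\hproj{\B}}$ under homotopy equivalence together with h-projective replacement. Once this step is handled, everything else should follow formally from Toen's theorem and the enriched Yoneda Lemma.
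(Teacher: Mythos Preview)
The paper does not supply a proof of this corollary: it is stated with attribution to \cite[7.2]{Toen} and \cite[Cor.\ 4.2]{CS} and then immediately used. So there is no ``paper's own proof'' to compare against; your proposal is an attempt to reconstruct the argument from those references.

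Your outline is broadly the right shape and matches the strategy in \cite{CS}: build the comparison functor from \(\hproj{\A^\opp\otimes\B}\) into \(\RHom{\A,\hproj{\B}}\) by sending \(E\) to the bimodule corresponding to \(Y_{\hproj{\B}}\circ\Phi_E\), and handle the second assertion via restriction/extension along \(Y_\A\). Quasi-full faithfulness by Yoneda is fine, and the second statement follows from the parts of the extension proposition you cite.

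There is, however, a genuine gap at exactly the point you flag. For quasi-essential surjectivity you say you will ``rectify these pointwise equivalences through an h-projective replacement,'' but this is the entire content of the step and your sketch does not explain it. Concretely: given \(F\in\hproj{\A^\opp\otimes\hproj{\B}}^{\rqr}\), you need a preimage in \(\hproj{\A^\opp\otimes\B}\). The natural candidate is to restrict along \(1\otimes Y_\B\colon \A^\opp\otimes\B\to\A^\opp\otimes\hproj{\B}\) and then take an h-projective resolution \(E\); but you must then show that \(\Psi(E)\) is homotopy equivalent to \(F\). This amounts to checking that the counit \(\Ind{}\,\Res{}\,F\to F\) is a homotopy equivalence, which requires knowing that the restricted module is already h-projective in the \(\B\)-direction (so that induction computes the derived tensor) and that evaluation at representables recovers \(\Phi_F(A)\) up to homotopy. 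Neither point is automatic from ``closure of \(\overline{\hproj{\B}}\) under homotopy equivalence''; in \cite{CS} this is handled by a careful analysis of the induction/restriction adjunction and the characterisation of h-projectives. As written, your proposal identifies the obstacle correctly but does not clear it.
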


To get a sense of the value of this result, let us recall one application from \cite[Section 8.3]{Toen}. Let \(X\) and \(Y\) be quasi-compact and separated schemes over \(\op{Spec} k\). 
Recall the dg-model for \(\mathrm{D}(\op{Qcoh} X)\), \(\mathcal L_{\op{qcoh}}(X)\), is the \(\mathcal C(k)\)-enriched subcategory of fibrant-cofibrant objects in the injective model structure on \(C(\op{Qcoh} X)\).

\begin{theorem}[{\cite[Theorem 8.3]{Toen}}]
  Let \(X\) and \(Y\) be quasi-compact, quasi-separated schemes over \(k\). Then there exists an isomorphism in \(\Ho{\dgcat{k}}\)
  \begin{displaymath}
    \RHomc{\mathcal L_{\op{qcoh}} X,\mathcal L_{\op{qcoh}} Y} \cong \mathcal L_{\op{qcoh}} (X \times_k Y)
  \end{displaymath}
  which takes a complex \(E \in \mathcal L_{\op{qcoh}} (X \times_k Y)\) to the exact functor on the homotopy categories
  \begin{align*}
    \Phi_E : \mathrm{D}(\Qcoh{X}) & \to \mathrm{D}(\Qcoh{Y}) \\
    M & \mapsto \mathbf{R}\pi_{2 \ast} \left( E \overset{\mathbf{L}}{\otimes} \mathbf{L}\pi_1^\ast M \right)
  \end{align*}
\end{theorem}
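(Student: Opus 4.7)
My plan is to reduce the geometric statement to one about dg-algebras, apply Corollary~\ref{corollary: Toen}, and then transport the resulting dg-category back through a K\"unneth-type identification for the product $X \times_k Y$.

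First, I would invoke Bondal--Van den Bergh to produce a perfect compact generator of $\mathrm{D}(\Qcoh{X})$ (and of $\mathrm{D}(\Qcoh{Y})$), available since $X$ and $Y$ are quasi-compact and quasi-separated. Let $\widetilde G_X$ and $\widetilde G_Y$ be cofibrant replacements of such generators inside $\mathcal L_{\op{qcoh}} X$ and $\mathcal L_{\op{qcoh}} Y$, and set
\[A_X := \op{End}_{\mathcal L_{\op{qcoh}} X}(\widetilde G_X), \qquad A_Y := \op{End}_{\mathcal L_{\op{qcoh}} Y}(\widetilde G_Y).\]
A Keller-style tilting argument, lifted to dg-enhancements in T\"oen's framework, produces quasi-equivalences $\mathcal L_{\op{qcoh}} X \simeq \hproj{A_X}$ and $\mathcal L_{\op{qcoh}} Y \simeq \hproj{A_Y}$, sending a complex $F$ to the dg-module $\op{Hom}(\widetilde G_\bullet, F)$. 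Plugging these into Corollary~\ref{corollary: Toen} would then give in $\Ho{\dgcat{k}}$
\[\RHomc{\mathcal L_{\op{qcoh}} X, \mathcal L_{\op{qcoh}} Y} \simeq \RHom{A_X, \hproj{A_Y}} \simeq \hproj{A_X^\opp \otimes A_Y}.\]

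The bulk of the remaining work is a K\"unneth-type identification $\hproj{A_X^\opp \otimes A_Y} \simeq \mathcal L_{\op{qcoh}}(X \times_k Y)$. I would argue that the external tensor product $\widetilde G_X \boxtimes \widetilde G_Y := \mathbf{L}\pi_X^\ast \widetilde G_X \otimes^{\mathbf{L}} \mathbf{L}\pi_Y^\ast \widetilde G_Y$ is a perfect compact generator of $\mathrm{D}(\Qcoh{X \times_k Y})$ whose derived endomorphism dg-algebra is quasi-isomorphic to $A_X \otimes_k A_Y$. Compactness is inherited from perfectness; generation reduces, via flat base change along $\pi_X$ and $\pi_Y$, to testing on objects pulled back from either factor; and the endomorphism calculation uses the projection formula together with the flatness of $X \to \op{Spec} k$ and $Y \to \op{Spec} k$, which removes the need for a derived correction over the field. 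A final application of the tilting equivalence to the product then yields the sought-after equivalence.

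Finally, to recognize the composite as $E \mapsto \Phi_E$, I would trace a complex $E \in \mathcal L_{\op{qcoh}}(X \times_k Y)$ through the identifications: it corresponds to the dg $A_X^\opp \otimes A_Y$-bimodule $\op{Hom}(\widetilde G_X \boxtimes \widetilde G_Y, E)$, and the induced dg-functor $\hproj{A_X} \to \hproj{A_Y}$ sends $M$ to $M \otimes_{A_X}^{\mathbf{L}} \op{Hom}(\widetilde G_X \boxtimes \widetilde G_Y, E)$. Undoing the tilting dictionaries on both sides, together with the projection formula, converts this bimodule tensor into $\mathbf{R}\pi_{Y\ast}(E \otimes^{\mathbf{L}} \mathbf{L}\pi_X^\ast(-))$, which is the stated Fourier--Mukai formula. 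The principal obstacle is the K\"unneth step: for smooth projective varieties it is classical, but in full qcqs generality the unbounded complexes and the behavior of compact generators under base change require careful bookkeeping, which is precisely the reason T\"oen works in the dg-enhanced setting from the outset.
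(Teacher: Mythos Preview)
Your sketch is a reasonable outline of T\"oen's actual argument in \cite{Toen}, but you should be aware that the paper itself does not reprove this result: its entire proof reads ``The first part of the statement is exactly as in \cite{Toen}. The second part is implicit.'' In other words, the authors treat this as a quoted theorem, deferring the existence of the isomorphism entirely to T\"oen and merely asserting that the Fourier--Mukai description of the induced functor can be extracted from that reference.

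So there is no real discrepancy to flag: your approach (reduce to dg-algebras via a Bondal--Van den Bergh compact generator, apply Corollary~\ref{corollary: Toen}, and establish a K\"unneth identification for the product) is essentially the strategy of \cite{Toen}, and hence is consistent with what the paper invokes. What your write-up adds beyond the paper is an explicit roadmap for the ``implicit'' second part, tracing the bimodule through the tilting equivalences and the projection formula to recover $\Phi_E$. If you want to match the paper's level of detail, a one-line citation suffices; if you want to actually justify the Fourier--Mukai formula, your last paragraph is the right shape, though the unbounded K\"unneth and base-change steps in full qcqs generality would still need to be pinned to precise statements in \cite{Toen} or \cite{BFN} rather than left as narrative.
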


\begin{proof}
  The first part of the statement is exactly as in \cite{Toen}. The second part is implicit. 
\end{proof}

\section{Background on noncommutative projective schemes} \label{section: background on NCP}

\subsection{Recollections and conditions} \label{subsection: standard results and conditions}

Noncommutative projective schemes were introduced by Artin and Zhang in \cite{AZ}. We recall the definition.

\begin{definition}
  Let \(N\) be a finitely-generated abelian group. We say that a \(k\)-algebra \(A\) is \(N\)-graded if there exists a decomposition as \(k\)-modules 
  \begin{displaymath}
    A = \bigoplus_{n \in N} A_n
  \end{displaymath}
  with \(A_n A_m \subset A_{n+m}\). One says that \(A\) is \textbf{connected graded} if it is \(\Z\)-graded with \(A_0 = k\) and \(A_n = 0\) for \(n < 0\). 
\end{definition}

For algebraic geometers, the most common example is the homogenous coordinate ring of a projective scheme. These are of course commutative. One has a plentitude of noncommutative examples. 

\begin{example} \label{example: ncPs}
  Let us take \(k = \mathbf{C}\) and consider the following quotient of the free algebra 
  \begin{displaymath}
    A_q := \mathbf{C}\langle x_0,\ldots,x_n \rangle/(x_i x_j - q_{ij} x_j x_i)
  \end{displaymath}
  for \(q_{ij} \in \mathbf{C}^\times\). These give noncommutative deformations of \(\mathbf{P}^n\).
\end{example}

\begin{example} \label{example: ncCY}
  Building off of Example~\ref{example: ncPs}, we recall the following class of noncommutative algebras of Kanazawa \cite{Kana}. Pick \(\phi \in \mathbf{C}\) and $q_{ij}$ according to \cite[Theorem 2.1]{Kana} with $\prod_{i=1}^n q_{ij} = 1$ for all $j$. And set 
  \begin{displaymath}
    A^\phi_q := A_q / \left( \sum_{i = 0}^n x_i^{n+1} - \phi(n+1)(x_0\cdots x_n) \right). 
  \end{displaymath}
  This is the noncommutative version of the homogeneous coordinate rings of the Hesse (or Dwork) pencil of Calabi-Yau hypersurfaces in \(\mathbf{P}^n\). 
\end{example}

\begin{definition}
  Let \(M\) be a graded \(A\)-module. We say that \(M\) has \textbf{right limited grading} if there exists a \(d\) such \(M_{d^\prime} = 0\) for \(d^\prime \geq d\). We define \textbf{left limited grading} analogously.
\end{definition}

For a connected graded \(k\)-algebra, \(A\), one has the two-sided ideal
\begin{displaymath}
  A_{\geq m} :=  \bigoplus_{n \geq m} A_n.
\end{displaymath}

\begin{definition}
  Let \(A\) be a finitely generated connected graded algebra. Recall that an element, \(m\), of a module, \(M\), is \textbf{torsion} if there is an \(n\) such that
  \begin{displaymath}
    A_{\geq n} m = 0.
  \end{displaymath}
  We let \(\tau\) denote the functor that takes a module, \(M\), to its torsion submodule.
  The module \(M\) is torsion if \(\tau M = M\).
\end{definition}

\begin{proposition}\label{proposition: f.g. torsion}
  Let \(A\) be a connected graded \(k\)-algebra.
  Denote by \(\Tors{A}\), the full subcategory of \(\Gr{A}\) consisting of all torsion modules.
  If \(A\) is finitely generated in positive degree, then \(\Tors{A}\) is a Serre subcategory.
\end{proposition}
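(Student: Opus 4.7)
The goal is to verify the three defining properties of a Serre subcategory: closure under subobjects, quotients, and extensions. Closure under subobjects and quotients is essentially immediate from the definition. If $M$ is torsion and $M' \hookrightarrow M$, every $m \in M'$ sits inside $M$ and is annihilated by $A_{\geq n}$ for some $n$, so $M' \in \Tors{A}$. Dually, for a quotient $M \twoheadrightarrow M''$, lifting $\bar m$ to $m \in M$ with $A_{\geq n} m = 0$ gives $A_{\geq n} \bar m = 0$. Neither argument uses the finite-generation hypothesis.

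The substantive content lies in closure under extensions. Given a short exact sequence $0 \to M' \to M \to M'' \to 0$ with $M', M'' \in \Tors{A}$, fix a homogeneous element $m \in M$. Its image $\bar m \in M''$ is torsion, so $A_{\geq n} \bar m = 0$ for some $n$, which means $A_{\geq n} m \subseteq M'$. My plan is to exploit the finite-generation hypothesis in two quantitative ways. First, each graded piece $A_j$ is a finite-dimensional $k$-vector space, since $A_j$ is spanned by monomials of total degree $j$ in finitely many generators of positive degree. Second, $A_{\geq n}$ is finitely generated as a one-sided $A$-module; indeed, given a monomial $a_{i_1} \cdots a_{i_s}$ of total degree $\geq n$, selecting the shortest suffix of degree $\geq n$ produces a factorization with right factor of degree in the bounded window $[n, n + D - 1]$, where $D$ is the maximum degree of a generator.

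Combining these two observations, $A_{\geq n} m$ is an $A$-submodule of $M'$ generated by the finite list $\{e_i m\}$, where the $e_i$ form a $k$-basis of $A_n \oplus \cdots \oplus A_{n + D - 1}$. Each $e_i m$ lies in the torsion module $M'$, so is annihilated by some $A_{\geq k_i}$; set $k = \max_i k_i$. Because $A_{\geq k}$ is a two-sided ideal, we obtain $A_{\geq k}(A_{\geq n} m) = 0$. To finish, I would show via the same prefix-decomposition trick that $A_{\geq K} \subseteq A_{\geq k} \cdot A_{\geq n}$ for $K$ sufficiently large (specifically $K \geq k + n + D$): any monomial of degree $\geq K$ admits a prefix of degree in $[k, k + D - 1]$ whose complementary suffix has degree $\geq n$. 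This yields $A_{\geq K} m = 0$, so $m$ is torsion and hence $M \in \Tors{A}$.

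The principal obstacle is the extension step, and within it the careful accounting of degrees. The finite-generation hypothesis is indispensable and is invoked twice --- once to reduce $A_{\geq n} m$ to a finitely generated submodule of $M'$, and once to absorb the product $A_{\geq k} \cdot A_{\geq n}$ back into an ideal of the form $A_{\geq K}$. In the absence of the hypothesis, with infinitely many generators, even the first reduction fails: one cannot guarantee that a single $k$ uniformly annihilates all generators of $A_{\geq n} m$.
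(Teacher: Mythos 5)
Your proof is correct and takes essentially the same approach as the paper's: reduce the uniformity problem for a fixed element \(m\) to finitely many torsion conditions (one for each element of a finite generating set of \(A_{\geq n}m\)), then split any homogeneous element of large enough weight into a suffix landing \(m\) in \(M'\) and a prefix that annihilates the result. Your bookkeeping is marginally cleaner --- via finite generation of \(A_{\geq n}\) as a one-sided module by a \(k\)-basis of the bounded window \(A_n \oplus \cdots \oplus A_{n+D-1}\), together with the ideal containment \(A_{\geq K} \subseteq A_{\geq k}\, A_{\geq n}\) for \(K\) large --- whereas the paper indexes over all \(n\)-fold products of generators and performs the same prefix/suffix splitting directly on monomials.
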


\begin{proof}
  Let \(S = \{x_i\}_{i=1}^r\) be a set of generators for \(A\) as a \(k\)-algebra and let \(d_i = \deg(x_i)\).
  Consider a short exact sequence
  \[0 \to M^\prime \to M \overset{p}\to M^{\prime\prime} \to 0.\]
  It's clear that if \(M\) is an object of \(\Tors{A}\), then so are \(M^\prime\) and \(M^{\prime\prime}\).
  Hence it suffices to show that if \(M^\prime\) and \(M^{\prime\prime}\) are both objects of \(\Tors{A}\), then so is \(M\).
  
  First assume that there exists some \(N\) such that for any \((X_1, X_2, \ldots, X_N) \in \prod_{i = 1}^N S\) we have \((X_1 \cdots X_N)m = 0\).
  Let \(d = \max(\{d_i\}_{i = 1}^r)\) and take any \(a \in A_{\geq dN}\).
  By assumption we can write \(a = \sum_{i=1}^n \alpha_i a_i\) with \(\alpha_i \in k\), each \(a_i\) of the form
  \[a_i = X_{i,1} X_{i,2}\cdots X_{i, s_i},\, X_{i,j} \in S\]
  and, for each \(i\),
  \[dN \leq \sum_{j = 1}^{s_i} \deg(X_{i,j}) = \deg(a) \leq ds_i.\]
  It follows that \(N \leq s_i\) and hence \(am = 0\).
  Thus it suffices to find such an \(N\).

  Fix an element \(m \in M\).
  Since \(M^{\prime\prime}\) is an object of \(\Tors{A}\), there exists some \(n\) such that \(A_{\geq n} p(m) = 0\) and hence \(A_{\geq n}m \in M^\prime\).
  In particular, if we let \(T = \prod_{i = 1}^n S\), then for any element \(t = (X_1, X_2, \ldots, X_n) \in T\) we have an element \(a_t = X_1 X_2 \cdots X_n \in A_{\geq n}\) and so \(a_t m \in M^\prime\).
  Let \(n_t\) be such that \(A_{\geq n_t} (a_t m) = 0\) and take \(N = 2\max(\{n_t\}_{t \in T} \cup \{n\}) + 1\).
  If we take any element \((X_1, X_2, \ldots, X_N) \in \prod_{i = 1}^N S\), then we can form an element \(a_t = X_{N - n} X_{N - n + 1} \cdots X_N \in A_{\geq n}\).
  By construction, \(a_t m \in M^\prime\) and \(a_t^\prime = X_1 X_2 \cdots X_{N - n - 1} \in A_{\geq n_t}\) since \(n_t \leq N - n - 1\).
  Therefore we have
  \[0 = a_t^\prime (a_t m) = (X_1 X_2 \cdots X_N) m,\]
  as desired.
\end{proof}

As such, we can form the quotient.

\begin{definition}
  Let \(A\) be connected graded and finitely generated as a \(k\)-algebra. Then denote the quotient of the category of graded \(A\)-modules by torsion as
  \begin{displaymath}
    \QGr{A} := \Gr{A} / \Tors{A}
  \end{displaymath}
  Let
  \begin{displaymath}
    \pi : \Gr{A} \to \QGr{A}
  \end{displaymath}
  denote the quotient functor. By Proposition~\ref{proposition: f.g. torsion} and \cite[Cor. 1, III.3]{DCA62}, \(\pi\) admits a fully faithful right adjoint which we denote by 
  \begin{displaymath}
    \omega : \QGr{A} \to \Gr{A}.
  \end{displaymath}
  Finally, we denote the composition \(Q := \omega \pi\). 
  
  The category \(\QGr{A}\) is defined to be the quasi-coherent sheaves on the \textbf{noncommutative projective scheme} \(X\). 
\end{definition}

\begin{remark}
  Note that, traditionally speaking, \(X\) is not a space, in general. In the case \(A\) is commutative and finitely-generated by elements of degree \(1\), then a famous result of Serre says that \(X\) is \(\op{Proj} A\). 
\end{remark}

One can give more explicit descriptions of \(Q\) and \(\tau\). 

\begin{proposition}
  Let \(A\) be a finitely generated connected graded \(k\)-algebra and let \(M\) be a graded \(A\)-module. Then 
  \begin{align*}
    \tau M & = \op{colim}_n \GR{A}(A/A_{\geq n}, M) \\
    Q M & = \op{colim}_n \GR{A}(A_{\geq n}, M).
  \end{align*}
\end{proposition}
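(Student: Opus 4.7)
The plan is to analyze the short exact sequence $0 \to A_{\geq n} \to A \to A/A_{\geq n} \to 0$ and pass to filtered colimits, invoking the Serre-quotient adjunction $\pi \dashv \omega$ for the second identity.

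For the torsion formula, I would note that $A/A_{\geq n}$ is cyclic as a graded $A$-module, generated by $\bar 1$, so evaluation at $\bar 1$ gives an isomorphism of graded $k$-modules $\GR{A}(A/A_{\geq n}, M) \cong \tau_n M$, where $\tau_n M := \{m \in M : A_{\geq n}\cdot m = 0\}$ is the $n$-th torsion layer. Under the surjections $A/A_{\geq n+1} \twoheadrightarrow A/A_{\geq n}$, the transition maps correspond to the inclusions $\tau_n M \hookrightarrow \tau_{n+1} M$, so the filtered colimit recovers $\bigcup_n \tau_n M = \tau M$ directly from the definition of torsion.

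For the $Q$ formula, I would apply $\GR{A}(-, M)$ to the short exact sequence and use the identification $\GR{A}(A, M) \cong M$ to produce the long exact sequence
\[ 0 \to \tau_n M \to M \to \GR{A}(A_{\geq n}, M) \to \op{Ext}^1_{\Gr{A}}(A/A_{\geq n}, M) \to \cdots \]
Since $\Gr{A}$ is a Grothendieck category (hence AB5), filtered colimits are exact, and passing to $\op{colim}_n$ yields
\[ 0 \to \tau M \to M \to \op{colim}_n \GR{A}(A_{\geq n}, M) \to R^1\tau M \to 0. \]
To identify the third term with $QM$, I would invoke the adjunction $\pi \dashv \omega$ together with Gabriel's description of Hom in a Serre quotient: taking $N = A$,
\[ \GR{A}(A, QM) = \QGr{A}(\pi A, \pi M) = \op{colim}_{A', M''} \GR{A}(A', M/M''), \]
where $A' \subseteq A$ has torsion quotient and $M'' \subseteq M$ is torsion. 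The ideals $A_{\geq n}$ are cofinal among such $A'$: any such $A'$ makes $A/A'$ cyclic and torsion, so $A_{\geq n} \subseteq A'$ for some $n$. A parallel cofinality argument eliminates the $M''$ by observing that a morphism $A_{\geq n} \to M/M''$, after further restriction to some $A_{\geq n'}$ with $n' \gg n$, lifts through $M$ modulo maps into $M''$ which are eventually killed in the colimit. Carrying this out in each graded degree via shifts yields the claimed identity.

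The principal obstacle is the elimination of the torsion submodules $M''$ in the Serre-quotient colimit. Cofinality of $\{A_{\geq n}\}$ among the $A'$ is immediate, but pinning down $\op{colim}_n \GR{A}(A_{\geq n}, M)$ as $QM$ — rather than some other extension of $M/\tau M$ by $R^1\tau M$ — requires checking that the natural map $M \to \op{colim}_n \GR{A}(A_{\geq n}, M)$ coincides with the unit $M \to QM$ of the adjunction. Once this compatibility is verified, a five-lemma comparison with the standard sequence $0 \to \tau M \to M \to QM \to R^1 \tau M \to 0$ completes the argument.
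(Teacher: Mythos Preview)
Your proposal is correct. The paper itself offers no argument here---it simply writes ``This is standard localization theory, see \cite{Stenstrom}''---so your sketch is strictly more than what appears in the text. What you have outlined \emph{is} the standard localization-theory proof: the identification of $\tau$ via $\GR{A}(A/A_{\geq n},-)$ is immediate, and the identification of $Q$ proceeds exactly by Gabriel's description of morphisms in the Serre quotient together with cofinality of the $A_{\geq n}$ in the Gabriel filter. Your flagged obstacle (eliminating the torsion $M''$ in the double colimit) is handled cleanly once you note that each $A_{\geq n}$ is a finitely generated left ideal (since $A$ is finitely generated as a $k$-algebra and each graded piece is finite dimensional), so any map $A_{\geq n}\to M/M''$ has image generated by finitely many torsion-lifted elements and hence restricts to zero on some $A_{\geq n'}$ after comparison with a lift to $M$; equivalently, $\op{colim}_n\GR{A}(A_{\geq n},\tau M)=0$, which collapses the $M''$-direction. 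The five-lemma comparison you propose then goes through with the natural map $\op{colim}_n\GR{A}(A_{\geq n},M)\to QM$ induced by applying $\pi$ and using $\pi A_{\geq n}\cong \pi A$.
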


\begin{proof}
  This is standard localization theory, see \cite{Stenstrom}.
\end{proof}

In studying questions of kernels and bimodules, we will have to move outside the realm of \(\Z\)-gradings.
While one can generally treat \(N\)-graded \(k\)-algebras in our analysis, we limit the scope a bit and only consider \(\Z^2\)-gradings of the following form.

\begin{definition}
  Let \(A\) and \(B\) be connected graded \(k\)-algebras. The tensor product \(A \otimes_k B\) will be equipped with its natural bi-grading 
  \begin{displaymath}
    (A \otimes_k B)_{n_1,n_2} = A_{n_1} \otimes_k B_{n_2}. 
  \end{displaymath}
  A \textbf{bi-bi module} for the pair \((A,B)\) is a \(\Z^2\)-graded \(A \otimes_k B\) module. 
\end{definition}

\begin{remark}
  As noted in the remarks above \cite[Lemma 4.1]{VdB}, the notion of \(A\)-torsion and \(B\)-torsion bi-bi modules is well-defined provided that \(A\) and \(B\) are finitely generated as \(k\)-algebras.
  From this point on, all of our \(k\)-algebras will be assumed to be finitely generated.
\end{remark}

There are a couple notions of torsion for a bi-bi module that one can dream up. We use the following.

\begin{definition}
  Let \(A\) and \(B\) be finitely generated, connected graded \(k\)-algebras, and let \(M\) be a bi-bi \(A\)-\(B\) module. We say that \(M\) is \textbf{torsion} if it lies in the smallest Serre subcategory containing \(A\)-torsion bi-bi modules and \(B\)-torsion bi-bi modules.
\end{definition}

In the case that \(A=B\), there is a particular bi-bi module of interest.

\begin{definition}
  For \(A\) a finitely generated, connected graded \(k\)-algebra, we define \(\Delta_A\) to be the \(A\)-\(A\) bi-bi module with 
  \begin{displaymath}
    (\Delta_A)_{i,j} = A_{i+j}
  \end{displaymath}
  and the natural left and right \(A\) actions. If the context is clear, we will often simply write \(\Delta\). 
\end{definition}

\begin{lemma} \label{lemma: alternate char of bibi torsion}
  Let \(A\) and \(B\) be finitely generated, connected graded \(k\)-algebras.
  A bi-bi module \(M\) is torsion if and only if there exists \(n_1,n_2\) such that 
  \begin{displaymath}
    (A \otimes B)_{\geq n_1, \geq n_2} m = 0
  \end{displaymath}
  for all \(m \in M\).
\end{lemma}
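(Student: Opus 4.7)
The plan is to prove both directions by producing a two-step short exact sequence whose outer pieces are uniformly annihilated by $A_{\geq n_1}$ and by $B_{\geq n_2}$ respectively, thereby realizing $M$ as an extension of an $A$-torsion bi-bi module by a $B$-torsion bi-bi module.

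For the ``if'' direction, suppose $(A\otimes B)_{\geq n_1,\geq n_2}\cdot m=0$ for all $m\in M$. Set $N:=B_{\geq n_2}\cdot M$, the sub-bi-bi-module generated by products $b\cdot m$ with $b\in B_{\geq n_2}$. The quotient $M/N$ is uniformly annihilated by $B_{\geq n_2}$, making it a $B$-torsion bi-bi module. For $N$, using that the images of $A$ and $B$ in $A\otimes B$ commute, for $a\in A_{\geq n_1}$, $b\in B_{\geq n_2}$, and $m\in M$ I have
$$
a\cdot(b\cdot m)=(a\otimes b)\cdot m=0.
$$
Since such elements generate $N$, the ideal $A_{\geq n_1}$ uniformly annihilates $N$, making $N$ an $A$-torsion bi-bi module. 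The short exact sequence $0\to N\to M\to M/N\to 0$ then exhibits $M$ as an extension of generators of the torsion Serre subcategory.

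For the ``only if'' direction, I would verify that the class $\mathcal{T}$ of bi-bi modules satisfying the uniform annihilation condition is itself a Serre subcategory containing the $A$- and $B$-torsion bi-bi modules; minimality then forces every torsion $M$ into $\mathcal{T}$. Closure under sub-bi-bi-modules and quotients is immediate. For an extension $0\to M'\to M\to M''\to 0$ with $M',M''\in\mathcal{T}$ bounded by $(n_1',n_2')$ and $(n_1'',n_2'')$ respectively, any $m\in M$ has image in $M''$ killed by $(A\otimes B)_{\geq n_1'',\geq n_2''}$, so $(A\otimes B)_{\geq n_1'',\geq n_2''}\cdot m\subset M'$, and applying $(A\otimes B)_{\geq n_1',\geq n_2'}$ again gives zero. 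Combined with the containment
$$
(A\otimes B)_{\geq n_1',\geq n_2'}\cdot (A\otimes B)_{\geq n_1'',\geq n_2''}\subset (A\otimes B)_{\geq n_1'+n_1'',\geq n_2'+n_2''},
$$
this yields $M\in\mathcal{T}$ with uniform parameters $(n_1'+n_1'',n_2'+n_2'')$. An $A$- or $B$-torsion generator lies in $\mathcal{T}$ by taking $n_2=0$ or $n_1=0$ respectively.

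The main subtlety I anticipate is the precise convention for ``$A$-torsion bi-bi module'' inherited from the remarks preceding \cite[Lemma~4.1]{VdB}. The uniform conclusion of the lemma requires that the generators be uniformly annihilated by some $A_{\geq n}$ rather than merely elementwise as in the single-variable $\Tors{A}$ of Proposition~\ref{proposition: f.g. torsion}. Confirming that the bi-bi setup over finitely generated connected graded $k$-algebras really does package torsion uniformly -- so that the Serre subcategory generators sit inside $\mathcal{T}$ -- is the key point I would pin down before finalizing the proof.
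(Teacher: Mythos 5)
Your instinct that something hinges on the quantifier convention is exactly right, and it is not a small worry: taken literally, the statement asserts a \emph{uniform} bound $n_1,n_2$ independent of $m$, but the paper's definition of torsion (for both $\Gr{A}$ and, implicitly, for bi-bi modules) is \emph{elementwise} -- for each $m$ there is some $n$ with $A_{\geq n}m=0$. Under the literal uniform reading the lemma would be false: a direct sum such as $\bigoplus_n A/A_{\geq n}$, with trivial $B$-action, is $A$-torsion and hence lies in the Serre subcategory, but admits no uniform annihilating ideal. So the ``only if'' gap you flagged (the generators not lying in your uniform class $\mathcal{T}$) is real and cannot be patched; it resolves by reading the condition as ``for each $m$ there exist $n_1,n_2$,'' which is how the paper's proof actually proceeds.

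Once you adopt the elementwise reading, your ``if''-direction decomposition also breaks: $N := B_{\geq n_2}\cdot M$ needs a single $n_2$, which you no longer have. The paper's replacement is to take $N := \tau_B M$, the $B$-torsion submodule; then for each $m$ with $(A\otimes B)_{\geq n_1,\geq n_2}m=0$ one has $B_{\geq n_2}(A_{\geq n_1}m)=0$, so $A_{\geq n_1}m\subset\tau_B M$, so the image of $m$ in $M/\tau_B M$ is killed by $A_{\geq n_1}$, making the quotient $A$-torsion. This avoids any need for a global bound. (Note also that you place the $A$-torsion piece as the sub and $B$-torsion as the quotient, while the paper does the reverse; either works once the elementwise issue is fixed.) Your Serre-subcategory verification for the ``only if'' direction is a useful expansion of what the paper only gestures at, but under the elementwise reading the closure-under-extension step is no longer free: $(A\otimes B)_{\geq n_1'',\geq n_2''}\cdot m$ need not be finitely generated, so you would have to rerun the finite-generation argument of Proposition~\ref{proposition: f.g. torsion} adapted to the $\Z^2$-graded setting rather than simply composing uniform bounds.
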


\begin{proof}
  Note that if \(M\) is \(A\)-torsion then \((A \otimes B)_{\geq n, \geq 0} m = 0\) for some \(n\) for each \(m \in M\). Similarly if \(M\) is \(B\)-torsion then \((A \otimes B)_{\geq 0,\geq n} M = 0\) for some \(n\). So it suffices to show that if
  \begin{displaymath}
    (A \otimes B)_{\geq n_1, \geq n_2} m = 0 , \forall m \in M
  \end{displaymath}
  then it lies in the Serre category generated by \(A\) and \(B\) torsion. Let \(\tau_B M\) be the \(B\)-torsion submodule of \(M\) and consider the quotient \(M/\tau_B M\). For \(m \in M\), we have \(A_{\geq n_1}m\) is \(B\)-torsion, so its image in the quotient \(M/\tau_B M\) is \(A\)-torsion. Consequently, \(M / \tau_B M\) is \(A\)-torsion itself and \(M\) is an extension of \(B\)-torsion and \(A\)-torsion. 
\end{proof}

One can form the quotient category
\begin{displaymath}
  \QGr{A \otimes_k B} := \Gr{A \otimes_k B} / \Tors{A \otimes_k B}.
\end{displaymath}

\begin{lemma} \label{lemma: biQ and bQGr}
  The quotient functor 
  \begin{displaymath}
    \pi : \Gr{A \otimes_k B} \to \QGr{A \otimes_k B}
  \end{displaymath}
  has a fully faithful right adjoint 
  \begin{displaymath}
    \omega : \QGr{A \otimes_k B} \to \Gr{A \otimes_k B}
  \end{displaymath}
  with 
  \begin{displaymath}
    QM := \omega \pi M = \op{colim}_{n_1,n_2} \GR{(A \otimes_k B)}( A_{\geq n_1} \otimes_k B_{\geq n_2} , M)
  \end{displaymath}
\end{lemma}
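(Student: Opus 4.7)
The plan is to follow the same two-step template used earlier for \(\QGr{A}\). First, I would establish that \(\Tors{A \otimes_k B}\) is a \emph{localizing} Serre subcategory of the Grothendieck category \(\Gr{A \otimes_k B}\); that is, Serre and closed under filtered colimits. The Serre property is built into the definition, so only colimit-closure must be checked. Lemma~\ref{lemma: alternate char of bibi torsion} reduces this to an elementwise computation: if \(M = \op{colim}_i M_i\) with each \(M_i\) torsion, any element \(m \in M\) is the image of some \(m_i \in M_i\), which is annihilated by some \(A_{\geq n_1} \otimes_k B_{\geq n_2}\), so \(m\) is as well.

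Given this, the existence of the fully faithful right adjoint \(\omega\) follows verbatim from the single-graded argument by applying \cite[Cor.~1, III.3]{DCA62} (or Stenstr\"om's general Gabriel localization machinery \cite{Stenstrom}) to \(\Gr{A \otimes_k B}\) localized at \(\Tors{A \otimes_k B}\). The only new point is that each \(A_{\geq n_1} \otimes_k B_{\geq n_2}\) is a genuine two-sided bi-graded ideal of \(A \otimes_k B\), which is immediate from homogeneity of multiplication.

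For the explicit formula, I would mimic the single-graded derivation. The key step is establishing cofinality of \(\{A_{\geq n_1} \otimes_k B_{\geq n_2}\}_{n_1, n_2}\) within the Gabriel filter of graded left ideals \(L \subseteq A \otimes_k B\) whose quotient \((A \otimes_k B)/L\) is bi-bi torsion. For any such \(L\), Lemma~\ref{lemma: alternate char of bibi torsion} applied to the cyclic generator \(\overline{1 \otimes 1}\) yields \(n_1, n_2\) with \(A_{\geq n_1} \otimes_k B_{\geq n_2} \subseteq L\). With this cofinality in hand, the standard Gabriel formula specialises to
\[
Q M \;=\; \op{colim}_{n_1, n_2}\, \GR{A \otimes_k B}\bigl(A_{\geq n_1} \otimes_k B_{\geq n_2},\, M\bigr),
\]
paralleling \(QM = \op{colim}_n \GR{A}(A_{\geq n}, M)\) from the \(\Z\)-graded case. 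The main obstacle is precisely this cofinality assertion; once it is settled, the rest of the computation is a formal consequence of Gabriel localization and no step requires genuinely new input beyond what was needed for \(\QGr{A}\).
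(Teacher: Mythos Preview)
Your proposal is correct and follows the same route as the paper, which simply invokes \cite[Cor.~1, III.3]{DCA62} in a single line; you have merely unpacked the details (localizing property, cofinality of the ideals \(A_{\geq n_1}\otimes_k B_{\geq n_2}\) in the Gabriel filter) that the paper leaves implicit. There is nothing to add.
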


\begin{proof}
  This is just an application of \cite[Cor. 1, III.3]{DCA62}.
\end{proof}

\begin{corollary}\label{corollary: relation on Qs}
  We have an isomorphism 
  \begin{displaymath}
    Q_{A \otimes_k B} \cong Q_A \circ Q_B \cong Q_B \circ Q_A
  \end{displaymath}
\end{corollary}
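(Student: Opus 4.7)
The plan is to prove the corollary directly from the explicit formula for $Q_{A \otimes_k B}$ given in Lemma~\ref{lemma: biQ and bQGr}, combined with the Hom-tensor adjunction for the external tensor product $A \otimes_k B$. First, for any bi-bi $A$-$B$ module $M$ and any $n_1, n_2 \geq 0$, the identification of $A \otimes_k B$-modules with $A$-modules carrying a commuting $B$-action gives a natural isomorphism
\[
\GR{(A \otimes_k B)}(A_{\geq n_1} \otimes_k B_{\geq n_2}, M) \;\cong\; \GR{A}\bigl(A_{\geq n_1}, \GR{B}(B_{\geq n_2}, M)\bigr),
\]
where the inner $\GR{B}(B_{\geq n_2}, M)$ inherits its $A$-module structure from the $A$-action on $M$ (which commutes with the $B$-action).

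Next, since the indexing poset $\Z_{\geq 0} \times \Z_{\geq 0}$ is filtered and colimits over a product of filtered posets can be computed iteratively, the formula from Lemma~\ref{lemma: biQ and bQGr} gives
\[
Q_{A \otimes_k B}(M) \;=\; \colim_{n_1}\colim_{n_2} \GR{A}\bigl(A_{\geq n_1}, \GR{B}(B_{\geq n_2}, M)\bigr).
\]
I would then commute the inner colimit past $\GR{A}(A_{\geq n_1}, -)$, using that $A_{\geq n_1}$ is a finitely generated graded $A$-module (since $A$ is finitely generated over $k$) and that the transition maps in the system $\{\GR{B}(B_{\geq n_2}, M)\}_{n_2}$ are inclusions, so that the filtered colimit is computed degreewise and every graded homomorphism out of $A_{\geq n_1}$ factors through some stage. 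After this interchange the inner colimit becomes $Q_B M$, and the outer colimit applied to $Q_B M$ is by definition $Q_A(Q_B M)$, yielding $Q_{A \otimes_k B} \cong Q_A \circ Q_B$. Swapping the roles of $A$ and $B$ throughout gives the second isomorphism $Q_{A \otimes_k B} \cong Q_B \circ Q_A$.

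The main obstacle is the justification of the interchange of filtered colimit and $\GR{A}(A_{\geq n_1},-)$. The naive ``compactness'' argument fails in general since $A_{\geq n_1}$ need not be finitely presented as a graded $A$-module, so I would argue directly using the bigraded structure: a graded homomorphism $A_{\geq n_1} \to \colim_{n_2} \GR{B}(B_{\geq n_2}, M)$ is determined by its values on a finite generating set, and each such value lands in some finite stage of the colimit; compatibility with the $A$-action on finitely many generators likewise forces factorization through a single stage. This degreewise finiteness, which is exactly what finite generation of $A$ provides, is the essential ingredient that makes the commutation hold in this setting.
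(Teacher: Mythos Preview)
Your approach is exactly the one the paper sketches in its one-line proof: start from the colimit formula of Lemma~\ref{lemma: biQ and bQGr}, apply tensor--Hom adjunction to rewrite $\GR{(A \otimes_k B)}(A_{\geq n_1} \otimes_k B_{\geq n_2}, M)$ as an iterated Hom, and then identify the iterated colimit with $Q_A \circ Q_B$. You are also right that the only nontrivial step is commuting $\op{colim}_{n_2}$ past $\GR{A}(A_{\geq n_1}, -)$, which the paper does not spell out.

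However, your justification of that interchange has a gap. The transition maps in the system $\{\GR{B}(B_{\geq n_2}, M)\}_{n_2}$ are \emph{restriction} maps induced by the inclusions $B_{\geq n_2'} \hookrightarrow B_{\geq n_2}$, and these restrictions are not injective in general: a homomorphism $B_{\geq n_2} \to M$ can vanish on $B_{\geq n_2'}$ without being zero (indeed, the kernels of these transition maps are precisely what detect $B$-torsion in $M$). Once that claim falls, finite generation of $A_{\geq n_1}$ is no longer enough. Lifting the images of a finite generating set to a common stage does not by itself produce an $A$-linear map into that stage, because the possibly infinitely many relations among the generators are only guaranteed to hold in the colimit; this is exactly the finitely generated versus finitely presented distinction that you flag and then try to sidestep. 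The clean fix is to assume $A$ left Noetherian so that $A_{\geq n_1}$ is finitely presented, after which $\GR{A}(A_{\geq n_1},-)$ commutes with filtered colimits by the standard argument. Every subsequent use of the corollary in the paper is under Noetherian hypotheses anyway.
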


\begin{proof}
  This follows from Lemma~\ref{lemma: biQ and bQGr} using tensor-Hom adjunction. 
\end{proof}

We also have the following standard triangles of derived functors. 

\begin{lemma} \label{lemma: exact triangles}
  Let \(A\) and \(B\) be finitely generated connected graded algebras. Then, we have natural transformations 
  \begin{displaymath}
    \mathbf{R} \tau \to \op{Id} \to \mathbf{R} Q 
  \end{displaymath}
  which when applied to any graded module \(M\) gives an exact triangle 
  \begin{displaymath}
    \mathbf{R} \tau M \to M \to \mathbf{R} Q M.
  \end{displaymath}
\end{lemma}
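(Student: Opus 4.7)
The plan is to derive everything from the concrete description of $\tau$ and $Q$ given in the earlier proposition, together with the tautological short exact sequence
\[
0 \to A_{\geq n} \to A \to A/A_{\geq n} \to 0
\]
in $\Gr{A}$, one for each $n \geq 0$. These sequences form a direct system indexed by $n$: the maps $A_{\geq n+1} \hookrightarrow A_{\geq n}$ and the induced quotient maps $A/A_{\geq n+1} \twoheadrightarrow A/A_{\geq n}$ are compatible with the identity on $A$, so after applying the contravariant functor $\GR{A}(-,M)$ and passing to the colimit we obtain a sequence of natural transformations
\[
\tau M \to M \to QM
\]
whose composition is zero, simply because it comes from the composition $A_{\geq n} \to A \to A/A_{\geq n}$.

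To promote this to an exact triangle of derived functors, the idea is to resolve $M$ by a K-injective complex $I^\bullet$ of graded $A$-modules (which exists since $\Gr{A}$ is a Grothendieck abelian category). Applying $\GR{A}(-, I^\bullet)$ to the short exact sequence above is exact on each term in cohomological degree by K-injectivity, and $\GR{A}(A, I^\bullet) = I^\bullet$. Thus for each $n$ we obtain a short exact sequence of complexes
\[
0 \to \GR{A}(A/A_{\geq n}, I^\bullet) \to I^\bullet \to \GR{A}(A_{\geq n}, I^\bullet) \to 0,
\]
and the outer terms compute $\mathbf{R}\GR{A}(A/A_{\geq n}, M)$ and $\mathbf{R}\GR{A}(A_{\geq n}, M)$ respectively. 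Now I take the filtered colimit over $n$. Since filtered colimits are exact in $\Gr{A}$, and since $\tau$ and $Q$ are themselves filtered colimits of $\op{Hom}$ functors, the colimit complexes represent $\mathbf{R}\tau M$ and $\mathbf{R}QM$, and the resulting sequence
\[
0 \to \mathbf{R}\tau M \to M \to \mathbf{R}QM \to 0
\]
is still short exact, yielding the desired distinguished triangle after passing to the derived category.

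The main subtlety is the interchange of the derived functors with the filtered colimit, i.e., the identification of $\op{colim}_n \mathbf{R}\GR{A}(A_{\geq n}, M)$ with $\mathbf{R}(\op{colim}_n \GR{A}(A_{\geq n}, -))(M)$. This is where one uses that $\Gr{A}$ satisfies $\mathrm{AB5}$ and that K-injective resolutions behave well under filtered colimits in such categories; alternatively, one can note that each $A_{\geq n}$ is finitely generated so that $\GR{A}(A_{\geq n}, -)$ commutes with directed colimits along monomorphisms and invoke standard results from \cite{Stenstrom} on localization at a hereditary torsion theory, which is exactly the setup of the torsion functor $\tau$ for the two-sided ideal filtration $\{A_{\geq n}\}$.
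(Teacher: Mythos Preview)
Your approach is essentially the same as the paper's: both apply $\underline{\op{Hom}}(-,I)$ to the tautological short exact sequences coming from the ideal filtration, use injectivity to get short exact sequences, and pass to the filtered colimit. Two small points are worth flagging. First, the phrase ``exact on each term in cohomological degree by K-injectivity'' is not quite right: K-injectivity of $I^\bullet$ is a statement about acyclic complexes mapping in, not about termwise exactness of $\underline{\op{Hom}}(-,I^p)$; what you actually need (and what is available in a Grothendieck category) is a K-injective resolution whose terms $I^p$ are injective objects. Once you have that, your final paragraph's worry evaporates: $\mathbf{R}\tau M$ and $\mathbf{R}Q M$ are by definition $\tau I^\bullet$ and $Q I^\bullet$, so no interchange of colimit with derived functor is needed. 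Second, the lemma (as the paper's proof makes explicit) is meant to cover both graded $A$-modules and bi-bi $A\otimes_k B$-modules; for the latter one runs your argument verbatim with the system $0 \to A_{\geq n_1}\otimes_k B_{\geq n_2} \to A\otimes_k B \to (A\otimes_k B)/(A_{\geq n_1}\otimes_k B_{\geq n_2}) \to 0$ in place of the singly-graded one, invoking Lemma~\ref{lemma: alternate char of bibi torsion} to identify the colimit of the left-hand terms with $\tau_{A\otimes_k B}$.
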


\begin{proof}
  Before we begin the proof, we clarify the statement. The conclusions hold for graded \(A\) (or \(B\)) modules and for bi-bi modules. Due to the formal properties, it is economical to keep the wording of the theorem as so since any reasonable interpretation yields a true statement. 
  
  For the case of graded \(A\) modules, this is well-known, see \cite[Property 4.6]{BV}. For the case of bi-bi \(A \otimes_k B\) modules, the natural transformations are obvious. For each \(M\), the sequence 
  \begin{displaymath}
    0 \to \tau M \to M \to Q M
  \end{displaymath}
  is exact. It suffices to prove that if \(M = I\) is injective, then the whole sequence is actually exact. Here one can use the system of exact sequences
  \begin{displaymath}
    0 \to A_{\geq n_1} \otimes_k B_{\geq n_2} \to A \otimes_k B \to (A \otimes_k B) / A_{\geq n_1} \otimes_k B_{\geq n_2} \to 0
  \end{displaymath}
  and exactness of \(\op{Hom}(-,I)\) plus Lemma~\ref{lemma: alternate char of bibi torsion} to get exactness. 
\end{proof}

In general, good behavior of \(\QGr{A}\) occurs with some homological assumptions on the ring \(A\). We recall two common such ones. 

\begin{definition} \label{definition: Ext-finite}
  Let \(A\) be a connected graded \(k\)-algebra. Following van den Bergh \cite{VdB}, we say that \(A\) is \textbf{Ext-finite} if for each \(n \geq 0\) the ungraded Ext-groups are finite dimensional 
  \begin{displaymath}
    \op{dim}_k \op{Ext}_A^n (k,k) < \infty.
  \end{displaymath}
\end{definition}

\begin{remark}
  The Ext's are taken in the category of left \(A\)-modules, a priori.
  Moreover, as noted in the opening remarks of \cite[Section 4.1]{BV}, if \(A\) is Ext-finite, then \(A\) is finitely presented.
\end{remark}

\begin{definition} \label{definition: chi}
  Following Artin and Zhang \cite{AZ}, given a graded left module \(M\), we say \(A\) satisfies \(\chi^\circ(M)\) if \(\underline{\op{Ext}}^n_A(k,M)\) has right limited grading for each \(n \geq 0\). 

\end{definition}

\begin{remark}
  The equivalence of these two definitions is \cite[Proposition 3.8 (1)]{AZ}.
\end{remark}

We recall some basic results on Ext-finiteness, essentially from \cite[Section 4]{VdB}.

\begin{proposition} \label{proposition: tensor and op properties of ext-finite}
  Assume that \(A\) and \(B\) are Ext-finite. Then
  \begin{enumerate}
  \item the ring \(A \otimes_k B\) is Ext-finite. 
  \item the ring \(A^\opp\) is Ext-finite.
  \end{enumerate}
\end{proposition}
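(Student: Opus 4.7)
The pivot is a reformulation: for any connected graded \(k\)-algebra \(A\), Ext-finiteness is equivalent to the existence of a minimal graded free resolution \(P_\bullet \to k\) with \(\op{rank}(P_n) < \infty\) for every \(n\). Minimality (\(d(P_n) \subseteq A_{\geq 1} P_{n-1}\)) forces all differentials in \(\op{Hom}_A(P_\bullet, k)\) and in \(k \otimes_A P_\bullet\) to vanish, so
\[
  \dim_k \op{Ext}^n_A(k,k) \;=\; \op{rank}(P_n) \;=\; \dim_k \op{Tor}^A_n(k,k).
\]
Both parts of the proposition follow by producing such a resolution for the new algebra.

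For (1), I would choose minimal finite-rank free resolutions \(P_\bullet \to k\) over \(A\) and \(Q_\bullet \to k\) over \(B\). The total complex \((P \otimes_k Q)_n = \bigoplus_{i+j=n} P_i \otimes_k Q_j\) consists of finite-rank free \(A \otimes_k B\)-modules, since \(A(-d) \otimes_k B(-e)\) is a rank-one free \(A \otimes_k B\)-module. Because \(\otimes_k\) is exact over the field \(k\), Künneth shows this is a resolution of \(k \otimes_k k = k\), and the differential still lies in the augmentation ideal of \(A \otimes_k B\), so the resolution is again minimal. Applying \(\op{Hom}_{A \otimes_k B}(-, k)\) then yields a Künneth identification
\[
  \op{Ext}^n_{A \otimes_k B}(k,k) \;\cong\; \bigoplus_{i+j=n} \op{Ext}^i_A(k,k) \otimes_k \op{Ext}^j_B(k,k),
\]
a finite sum of tensor products of finite-dimensional spaces.

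For (2), I would pass through \(\op{Tor}\). The reduced bar complex \(\bar A^{\otimes \bullet}\) with standard bar differential computes \(\op{Tor}^A_\bullet(k,k)\), and the involution reversing the order of the tensor factors (with the appropriate Koszul signs) gives an isomorphism of chain complexes onto the reduced bar complex of \(A^\opp\), since the \(A^\opp\)-bar differential multiplies adjacent tensor factors in the opposite order. Hence \(\dim_k \op{Tor}^{A^\opp}_n(k,k) = \dim_k \op{Tor}^A_n(k,k) < \infty\), and the opening equivalence transfers Ext-finiteness to \(A^\opp\).

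\textbf{Main obstacle.} Nothing deep is involved; the only care is (a) checking that the tensor-product resolution in (1) is genuinely an \((A \otimes_k B)\)-free resolution (rather than merely \(k\)-free), which uses the exactness of \(\otimes_k\) over a field and the finite-rank input, and (b) keeping track of Koszul signs in the bar-complex involution for (2). Both are bookkeeping steps once the reformulation via minimal resolutions is in hand.
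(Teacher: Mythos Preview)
Your argument is correct. The paper itself does not give a proof of this proposition at all: it simply cites \cite[Lemma 4.2]{VdB} and the surrounding discussion. So there is nothing to compare at the level of strategy---you have supplied the details that the paper outsources.

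The approach you take is the standard one and matches what one finds in Van den Bergh's paper. The key reformulation (Ext-finiteness \(\Leftrightarrow\) the minimal graded free resolution of \(k\) has finite ranks in each degree \(\Leftrightarrow\) \(\dim_k \op{Tor}^A_n(k,k) < \infty\)) is exactly the right pivot, and both deductions from it are clean: the K\"unneth argument for \(A \otimes_k B\) and the bar-complex reversal for \(A^\opp\). One small remark on (1): strictly speaking \(A \otimes_k B\) carries the \(\Z^2\)-grading in this paper rather than a connected \(\Z\)-grading, but the same Nakayama-type construction of minimal resolutions goes through for algebras connected in the bigraded sense (supported in the first quadrant with \((A \otimes_k B)_{0,0} = k\)), and the definition of Ext-finiteness only concerns the ungraded \(\op{Ext}^n(k,k)\), so nothing changes.
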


\begin{proof}
  See \cite[Lemma 4.2]{VdB} and the discussion preceeding it. 
\end{proof}

\begin{proposition} \label{proposition: derived Q commutes with coproducts}
  Assume that \(A\) is Ext-finite. Then \(\mathbf{R}\tau_A\) and \(\mathbf{R}Q_A\) both commute with coproducts. 
\end{proposition}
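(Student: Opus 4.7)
The plan is to first handle $\mathbf{R}\tau_A$ by reducing to the statement that $\underline{\op{Ext}}^i_A(A/A_{\geq n}, -)$ commutes with coproducts for every $n, i \geq 0$, and then to transfer the conclusion to $\mathbf{R}Q_A$ via the exact triangle $\mathbf{R}\tau_A \to \op{id} \to \mathbf{R}Q_A$ of Lemma~\ref{lemma: exact triangles}.

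First I would identify, via computing $\tau_A$ on an injective resolution of $M$ and recognizing each underived $\underline{\op{Hom}}$ as a derived one, the isomorphism
\[\mathbf{R}\tau_A M \;\cong\; \op{colim}_n \mathbf{R}\underline{\op{Hom}}_A(A/A_{\geq n}, M)\]
in the derived category. Since filtered colimits commute with arbitrary coproducts, it then suffices to show that $\mathbf{R}\underline{\op{Hom}}_A(A/A_{\geq n}, -)$ commutes with coproducts for each $n$.

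The heart of the argument is the construction of a resolution of $A/A_{\geq n}$ by degree-wise finitely generated graded projectives. Since $A$ is finitely generated connected graded, each $A_j$ is finite dimensional over $k$, so $A/A_{\geq n}$ is finite dimensional. The minimal graded projective resolution $P^\bullet \to k$ has terms $P^i = \bigoplus_j A(-j)^{b_{i,j}}$ with $\sum_j b_{i,j} = \op{dim}_k \op{Ext}^i_A(k,k) < \infty$ by Ext-finiteness. Filtering $A/A_{\geq n}$ by the submodules $A_{\geq i}/A_{\geq n}$, whose subquotients are finite sums of shifts of $k$, and applying the horseshoe lemma finitely many times yields an analogous resolution $Q^\bullet \to A/A_{\geq n}$ with each $Q^i$ a finite direct sum of shifts of $A$. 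For such a $Q^i$, the functor $\underline{\op{Hom}}_A(Q^i, -)$ is a finite sum of shift functors and commutes with coproducts on the nose, and since cohomology commutes with coproducts in the AB4 category $\Gr A$, each $\underline{\op{Ext}}^i_A(A/A_{\geq n}, -)$ commutes with coproducts. Upgrading this cohomology-level statement to a statement about the derived functor $\mathbf{R}\underline{\op{Hom}}_A(A/A_{\geq n}, -)$ on the unbounded derived category is done via the hypercohomology spectral sequence, which is compatible with direct sums termwise and converges uniformly in the family.

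Finally, applying the triangle $\mathbf{R}\tau_A \to \op{id} \to \mathbf{R}Q_A$ once to $\bigoplus_\alpha M_\alpha$ and once termwise to the family (then taking coproducts, using that a coproduct of distinguished triangles is distinguished) produces a morphism of distinguished triangles whose first two entries $\bigoplus_\alpha \mathbf{R}\tau_A M_\alpha \to \mathbf{R}\tau_A \bigoplus_\alpha M_\alpha$ and $\bigoplus_\alpha M_\alpha \to \bigoplus_\alpha M_\alpha$ are isomorphisms (the former by the argument above, the latter tautologically), so the triangulated five lemma forces the third to be an isomorphism as well. The step I expect to be the main obstacle is the passage from the cohomology-level coproduct compatibility to a statement in the unbounded derived category; this is where Ext-finiteness pulls its weight, because without the finite generation of each $Q^i$, $\underline{\op{Hom}}_A(Q^i, -)$ would not commute with coproducts term by term, and the spectral sequence comparison would fail.
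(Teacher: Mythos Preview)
Your proposal is correct and follows the same two-step outline as the paper: establish the claim for \(\mathbf{R}\tau_A\), then transfer it to \(\mathbf{R}Q_A\) via the triangle \(\mathbf{R}\tau_A \to \op{Id} \to \mathbf{R}Q_A\) (Lemma~\ref{lemma: exact triangles}) and the triangulated five lemma. The only difference is that the paper outsources the first step entirely to \cite[Lemma~4.3]{VdB}, whereas you have unpacked that citation---building a termwise finitely generated projective resolution of \(A/A_{\geq n}\) from Ext-finiteness of \(k\), so that \(\underline{\op{Hom}}_A(Q^i,-)\) is a finite sum of shift functors and visibly commutes with coproducts. One minor remark: the spectral sequence packaging you invoke for the passage to unbounded complexes is not really needed, since once you have a bounded-above projective resolution \(Q^\bullet\) of \(A/A_{\geq n}\) with each \(Q^i\) finitely generated, \(\mathbf{R}\underline{\op{Hom}}_A(A/A_{\geq n}, M)\) is computed by the honest Hom complex \(\underline{\op{Hom}}_A(Q^\bullet, M)\) for arbitrary \(M\), and this complex commutes with coproducts termwise.
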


\begin{proof}
  See \cite[Lemma 4.3]{VdB} for \(\mathbf{R}\tau_A\). Since coproducts are exact, using the triangle
  \begin{displaymath}
    \mathbf{R}\tau_A M \to M \to \mathbf{R}Q_A M 
  \end{displaymath}
  we see that \(\mathbf{R}\tau_A\) commutes with coproducts if and only if \(\mathbf{R}Q_A\) commutes with coproducts. 
\end{proof}

\begin{corollary} \label{corollary: Q preserves bimodules}
  Let \(A\) and \(B\) be finitely generated, connected graded \(k\)-algebras, and let \(P\) be a chain complex of bi-bi \(A \otimes_k B\) modules. Assume \(\mathbf{R}Q_A\) commutes with coproducts. Then, \(\mathbf{R}Q_A P\) is naturally also a chain complex of bi-bi modules. In particular, if \(A\) is Ext-finite, \(\mathbf{R}Q_A P\) has a natural bi-bi structure. 
\end{corollary}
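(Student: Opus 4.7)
The plan is to transport the \(B\)-action on \(P\) through \(\mathbf{R}Q_A\) by functoriality, using the coproduct-preservation hypothesis to ensure that tensoring with \(B\) commutes with \(\mathbf{R}Q_A\). Because \(B\) is finitely generated and connected graded, each graded piece \(B_n\) is a finite-dimensional \(k\)-vector space, so for any chain complex \(M\) of graded \(A\)-modules the tensor product \(B \otimes_k M\) decomposes as a coproduct of grading shifts of \(M\). Since \(\mathbf{R}Q_A\) commutes with grading shifts and, by hypothesis, with coproducts, I obtain a natural isomorphism \(\mathbf{R}Q_A(B \otimes_k P) \cong B \otimes_k \mathbf{R}Q_A(P)\), and likewise for iterated tensor powers \(B^{\otimes n} \otimes_k P\).

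Next, the bi-bi structure on \(P\) is encoded by an \(A\)-linear morphism \(\mu \colon B \otimes_k P \to P\)—commutativity of the \(A\)- and \(B\)-actions is exactly the \(A\)-linearity of \(\mu\)—together with the standard associativity and unit diagrams. Applying \(\mathbf{R}Q_A\) to \(\mu\) and composing with the identification above produces a map \(B \otimes_k \mathbf{R}Q_A(P) \to \mathbf{R}Q_A(P)\); applying \(\mathbf{R}Q_A\) to the associativity and unit diagrams shows that this map satisfies the same axioms, giving the desired bi-bi structure on \(\mathbf{R}Q_A(P)\). The ``in particular'' clause then follows from Proposition~\ref{proposition: derived Q commutes with coproducts}, which guarantees that \(\mathbf{R}Q_A\) commutes with coproducts whenever \(A\) is Ext-finite.

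The main obstacle I anticipate is exhibiting this bi-bi structure on an actual chain complex rather than only on an object of the derived category. To address this I would take an h-injective resolution \(P \to I\) in chain complexes of bi-bi modules and set \(\mathbf{R}Q_A(P) := Q_A(I)\), applied termwise. This is manifestly a chain complex of bi-bi modules, since the underived formula \(Q_A(N) = \op{colim}_n \op{Hom}_A(A_{\geq n}, N)\) inherits a \(B\)-action from \(N\): the complex \(A_{\geq n}\) carries no \(B\)-structure, so the \(B\)-action on \(N\) passes through each \(\op{Hom}_A(A_{\geq n}, N)\) and commutes with the filtered colimit. What remains is to verify that \(Q_A(I)\) computes \(\mathbf{R}Q_A(P)\) when viewed forgetfully in graded \(A\)-modules, and here one reduces to showing bi-bi injectives are \(Q_A\)-acyclic as graded \(A\)-modules, which is again underwritten by the identification \(\mathbf{R}Q_A(B \otimes_k -) \cong B \otimes_k \mathbf{R}Q_A(-)\).
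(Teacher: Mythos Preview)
Your third paragraph is the part that actually delivers a chain-level bi-bi structure, and it is essentially what the paper does in the proof of the \emph{next} corollary (natural maps between $Q$'s). For the present statement, however, the paper gives a much shorter direct construction: write $P = \bigoplus_{v} P_{\ast,v}$ as a coproduct of graded $A$-modules along the $B$-weight, declare $(\mathbf{R}Q_A P)_{u,v} := (\mathbf{R}Q_A P_{\ast,v})_u$, and let $B$ act by precomposition in the formula $Q_A(-) = \op{colim}_n \GR{A}(A_{\geq n},-)$. The only nontrivial check is that $\mathbf{R}Q_A\bigl(\bigoplus_v P_{\ast,v}\bigr) \cong \bigoplus_v \mathbf{R}Q_A(P_{\ast,v})$, which is exactly the coproduct hypothesis. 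Your route through the action map $\mu\colon B\otimes_k P \to P$ recovers the $B$-action but does not by itself produce the second $\Z$-grading, and that grading is where the coproduct hypothesis is actually spent; the phrase ``coproduct of grading shifts of $M$'' is also off, since tensoring with $B$ introduces no shift in the $A$-weight. One more correction to your third paragraph: the reason a bi-bi K-injective resolution computes $\mathbf{R}Q_A$ after forgetting to $\Gr{A}$ is not the identification $\mathbf{R}Q_A(B\otimes_k -)\cong B\otimes_k \mathbf{R}Q_A(-)$ you cite, but simply that the forgetful functor $\Gr{(A\otimes_k B)}\to\Gr{A}$ has an exact left adjoint (extension along the flat map $A\to A\otimes_k B$) and therefore preserves K-injectives.
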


\begin{proof}
  Note we already have an \(A\)-module structure so we only need to provide a \(\Z^2\) grading and a \(B\)-action. If we write 
  \begin{displaymath}
    P = \bigoplus_{v \in \Z} P_{\ast,v}
  \end{displaymath}
  as a direct sum of left graded \(A\)-modules, then we set 
  \begin{displaymath}
    (\mathbf{R}Q_A P)_{u,v} : = ( \mathbf{R} Q_A (P_{\ast,v}) )_u.
  \end{displaymath}
  The \(B\) module structure is precomposition with the \(B\)-action on \(P\). The only non-obvious condition of the bi-bi structure is that 
  \begin{displaymath}
    \mathbf{R}Q_A P = \bigoplus_{u,v} (\mathbf{R}Q_A P)_{u,v}
  \end{displaymath}
  which is equivalent to pulling the coproduct outside of \(\mathbf{R}Q_A\). We can do this for Ext-finite \(A\) thanks to Proposition~\ref{proposition: derived Q commutes with coproducts}. 
\end{proof}

\begin{corollary} \label{corollary: natural maps between Qs}
  Assume that \(A\) and \(B\) are left Noetherian, and that \(\mathbf{R}\tau_A\) and \(\mathbf{R}\tau_B\) both commute with coproducts. There exist natural morphisms of bimodules 
  \begin{align*}
    \beta^l_P & : \mathbf{R}Q_{A} P \to \mathbf{R}Q_{A \otimes_k B} P \\
    \beta^r_P & : \mathbf{R}Q_{B} P \to \mathbf{R}Q_{A \otimes_k B} P.
  \end{align*}
\end{corollary}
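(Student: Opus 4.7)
The plan is to construct $\beta^l_P$ (and symmetrically $\beta^r_P$) by promoting a natural inclusion of torsion subfunctors to a natural transformation of the corresponding derived functors, then using the exact triangles of Lemma~\ref{lemma: exact triangles} to pass to the quotient.

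First, for any bi-bi module $M$ one has an inclusion $\tau_A M \hookrightarrow \tau_{A\otimes_k B} M$ of bi-bi submodules of $M$. Indeed, if $m \in \tau_A M$ is annihilated by $A_{\geq n}$, then
\[
(A\otimes_k B)_{\geq n,\geq 0}\, m \;=\; (A_{\geq n} \otimes_k B)\, m \;=\; 0,
\]
so by Lemma~\ref{lemma: alternate char of bibi torsion} the element $m$ lies in $\tau_{A\otimes_k B} M$. This yields a natural transformation $\tau_A \to \tau_{A\otimes_k B}$ of functors on the category of bi-bi modules, and symmetrically an inclusion $\tau_B \to \tau_{A\otimes_k B}$.

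Next, I would lift this to the derived level. Choose an injective resolution $P \to I$ in $\Gr{A\otimes_k B}$, which exists because left Noetherianness of both $A$ and $B$ provides enough injectives in the bi-bi category. By Corollary~\ref{corollary: Q preserves bimodules} together with the hypothesis that $\mathbf{R}\tau_A$ commutes with coproducts, the complex $\mathbf{R}\tau_A P$ may be computed as $\tau_A I$ in the category of complexes of bi-bi modules (and similarly $\mathbf{R}\tau_{A\otimes_k B} P$ is computed as $\tau_{A\otimes_k B} I$). The inclusion of subcomplexes $\tau_A I \hookrightarrow \tau_{A\otimes_k B} I$ therefore produces a morphism $\mathbf{R}\tau_A P \to \mathbf{R}\tau_{A\otimes_k B} P$ in the derived category of bi-bi modules, natural in $P$.

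Finally, applying Lemma~\ref{lemma: exact triangles} to the pairs $(\tau_A, Q_A)$ and $(\tau_{A\otimes_k B}, Q_{A\otimes_k B})$ acting on $P$, the natural transformation just constructed, together with the identity on $P$, extends to a morphism of distinguished triangles whose induced map on third terms is the desired bimodule morphism $\beta^l_P \colon \mathbf{R}Q_A P \to \mathbf{R}Q_{A\otimes_k B} P$; the construction of $\beta^r_P$ is entirely symmetric, interchanging the roles of $A$ and $B$. The principal obstacle, and the reason for the Noetherian and coproduct-commutation hypotheses, lies in the second step: one must ensure that $\mathbf{R}\tau_A$ on the derived category of bi-bi modules is computed by applying $\tau_A$ to a bi-bi injective resolution (as opposed to an injective resolution of the underlying graded $A$-module), so that the set-theoretic inclusion of torsion submodules genuinely upgrades to a morphism of bimodules.
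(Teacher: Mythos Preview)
Your strategy coincides with the paper's: both arguments promote the inclusion $\tau_A \hookrightarrow \tau_{A\otimes_k B}$ to the derived level via a bi-bi injective resolution and then pass to the quotient side. The paper justifies computing $\mathbf{R}\tau_A$ on a bi-bi injective resolution by observing that $A \to A\otimes_k B$ is flat, so the forgetful/extension adjunction is Quillen and bi-bi injectives restrict to $A$-injectives; your citation of Corollary~\ref{corollary: Q preserves bimodules} for this point is not quite on target, since that corollary only guarantees a bi-bi structure on the output, not that the bi-bi resolution computes the $A$-side derived functor.

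There is one genuine gap in your final step. Invoking TR3 to extend the commuting square to a morphism of distinguished triangles produces \emph{some} map $\mathbf{R}Q_A P \to \mathbf{R}Q_{A\otimes_k B} P$, but TR3 gives neither uniqueness nor naturality of that filler, so you have not yet produced a \emph{natural} transformation $\beta^l$. The paper avoids this by staying at the chain level: it first records (Lemma~\ref{lemma: Gr injectives}) that for an injective $I$ the sequence $0 \to \tau_A I \to I \to Q_A I \to 0$ is short exact, so on the resolution one has genuine short exact sequences of bi-bi modules in each degree, and $(\beta^l_P)^n$ is defined as the unique map on cokernels. Uniqueness via the universal property then yields both that the $(\beta^l_P)^n$ assemble into a chain map and that the resulting $\beta^l$ is natural in $P$. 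Your argument becomes correct once you make this chain-level move explicit rather than appealing to the triangulated axiom.
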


We first record a more explicit version of \cite[Prop 7.1 (5)]{AZ}, which states that every injective object of \(\Gr{A}\) is of the form \(I_1 \oplus I_2\), with \(I_1\) a torsion-free injective and \(I_2\) an injective torsion module.

\begin{lemma} \label{lemma: Gr injectives}
  Every injective \(I\) of \(\Gr{A}\) is isomorphic to \(\tau_A I \oplus Q_A I\).

\end{lemma}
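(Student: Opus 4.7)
The plan is to invoke \cite[Prop 7.1 (5)]{AZ}, which provides a decomposition $I \cong I_1 \oplus I_2$ in which $I_1$ is torsion-free and injective and $I_2$ is torsion and injective. It then remains only to identify $I_2$ with $\tau_A I$ and $I_1$ with $Q_A I$.

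The identification $\tau_A I \cong I_2$ is immediate: the functor $\tau_A$ is additive, is the identity on torsion modules, and vanishes on torsion-free modules, so
\[\tau_A I \cong \tau_A I_1 \oplus \tau_A I_2 = 0 \oplus I_2 = I_2.\]
For the identification $Q_A I \cong I_1$, first note that $\pi(I_2) = 0$ since $I_2 \in \Tors{A}$, and therefore $Q_A I = \omega \pi I \cong \omega \pi I_1 = Q_A I_1$. To see that $Q_A I_1 \cong I_1$, I would apply Lemma~\ref{lemma: exact triangles} to $I_1$ to obtain the triangle
\[\mathbf{R}\tau_A I_1 \to I_1 \to \mathbf{R}Q_A I_1 \to \mathbf{R}\tau_A I_1[1].\]
Because $I_1$ is injective, the higher right derived functors of the left-exact functor $\tau_A$ vanish on it, so $\mathbf{R}\tau_A I_1 \simeq \tau_A I_1$; and because $I_1$ is torsion-free, $\tau_A I_1 = 0$. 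Hence the middle arrow is an isomorphism, and taking $H^0$ yields $I_1 \cong Q_A I_1$.

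Combining these computations gives $I \cong I_1 \oplus I_2 \cong Q_A I \oplus \tau_A I$, as claimed. There is no real obstacle here: the only non-trivial input is the Artin--Zhang structural result cited at the outset, and everything else is a direct consequence of the exact triangle together with the injectivity and torsion-freeness of the two summands.
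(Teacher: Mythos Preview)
Your proof is correct, but it proceeds differently from the paper's argument. The paper does \emph{not} invoke \cite[Prop.\ 7.1(5)]{AZ} as an input; rather, it essentially reproves that statement in the specific form needed. Concretely, the paper shows directly that $\tau_A I$ is itself injective: if $\tau_A I \hookrightarrow E$ is an injective envelope, then $E$ is torsion (essential extensions of torsion modules are torsion, \cite[Prop.\ 2.2]{AZ}), so the induced monomorphism $E \hookrightarrow I$ forces $E = \tau_A I$ by maximality of the torsion submodule. Once $\tau_A I$ is known to be injective, the short exact sequence $0 \to \tau_A I \to I \to Q_A I \to 0$ splits, and one is done.

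By contrast, you take the Artin--Zhang decomposition $I \cong I_1 \oplus I_2$ as a black box and then identify the summands, using additivity of $\tau_A$ for $I_2$ and the derived triangle of Lemma~\ref{lemma: exact triangles} for $I_1$. This is entirely valid---there is no circularity, since Lemma~\ref{lemma: exact triangles} for graded $A$-modules is imported from \cite{BV}---though the derived machinery is heavier than necessary: once $I_1$ is torsion-free and injective it is automatically $Q_A$-closed, so $I_1 \to Q_A I_1$ is an isomorphism on the nose. The paper's route has the virtue of making the lemma genuinely self-contained (and clarifying exactly why $\tau_A I$ is a direct summand), whereas your route makes the dependence on \cite{AZ} explicit and is quicker if one is willing to cite that result.
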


\begin{proof}
  By \cite[Prop 2.2]{AZ} any essential extension of a torsion module is torsion, so if \(\tau_{A} I \to E\) is an injective envelope, then we have a monic extension over the inclusion of the torsion submodule
  \[\begin{tikzcd}
  0 \arrow{r} & \tau_{A} I \arrow{r}\arrow{d} & E \arrow[dashed]{ld}{\exists}\\
  & I
  \end{tikzcd}\]  since the injective envelope is an essential monomorphism.
  By maximality of \(\tau_{A} I\) amongst torsion subobjects of \(I\), it follows that \(\tau_{A} I = E\) is injective.
  Denoting by \(\varepsilon_A\) the unit of the adjunction
  \(\begin{tikzcd}
  \pi_A \dashv \omega_A \colon \Gr{A} \arrow[shift left=.5ex]{r} & \QGr{A} \arrow[shift left=.5ex]{l}
  \end{tikzcd}\)
  the exact sequence
  \[\begin{tikzcd}
  0 \arrow{r} & \tau_{A} I \arrow{r} & I \arrow{r}{\varepsilon_A(I)} & Q_{A} I \arrow{r} & 0
  \end{tikzcd}\]  splits, as desired.
\end{proof}

\begin{proof}[Proof of {\ref{corollary: natural maps between Qs}}]
  Thanks to Corollary~\ref{corollary: Q preserves bimodules}, we see that the question is well-posed. We handle the case of \(\beta^l_P\) and note that case of \(\beta^r_P\) is the same argument, mutatis mutandis.

  First we make some observations about objects of \(\Gr{\left(A \otimes_k B\right)}\).
  If we regard such an object, \(E\), as an \(A\)-module, the \(A\)-action is
  \[a \cdot e = (a \otimes 1) \cdot e\]
  and we can view \(\tau_A E\) as the elements \(e\) of \(E\) for which
  \[a \cdot e = (a \otimes 1) \cdot e = 0\]
  whenever \(a \in A_{\geq m}\) for some \(m \in \Z\).
  As such, \(\tau_A E\) inherits a bimodule structure from \(E\) and \(\Z^2\)-grading \((\tau_A E)_{u,v} = (\tau_A E_{*,v})_u\) coming from the decomposition
  \[\tau_A E = \tau_A \bigoplus_v E_{\ast,v} \cong \bigoplus_v \tau_A E_{\ast,v}.\]
  Thanks to Lemma~\ref{lemma: alternate char of bibi torsion}, we can view \(\tau_{A \otimes_k B} E\) as the elements \(e\) of \(E\) for which there exists integers \(m\) and \(n\) such that \(a \otimes b \cdot e = 0\) for all \(a \in A_{\geq m}\) and \(b \in B_{\geq n}\).
  From this viewpoint it's clear that
  \[a \otimes b \cdot e = (1 \otimes b) \cdot (a \otimes 1 \cdot e)\]
  implies \(\tau_A E\) includes into \(\tau_{A \otimes_k B} E\).

  We equip \(\Ch{\Gr{A}}\) with the injective model structure and use the methods of model categories to compute the derived functors (see \cite{Hov01} for more details).
  Since we can always replace \(P\) by a quasi-isomorphic fibrant object, we can assume that each \(P^n\) is an injective graded \(A \otimes_k B\)-module.
  Moreover, the fact that the canonical morphisms \(A \to A \otimes_k B\) is flat implies that the associated adjunction is Quillen, and hence \(P\) is fibrant when regarded as an object of  \(\Ch{\Gr{A}}\).
  Since \(Q_A\) preserves injectives, it follows that each \(Q_A P^n\) is an injective object of \(\Gr{A}\).
  It's clear from the fact that \(\tau_A P^n\) is an \(A \otimes_k B\)-module that   \[0 \to \tau_A P^n \to P^n \to P^n/\tau_A P^n \to 0\]
  is an exact sequence of \(\Gr{(A \otimes_k B)}\) for each \(n\).
  Moreover, by Lemma~\ref{lemma: Gr injectives} we have \(P^n/\tau_A P^n \cong Q_A P^n\).
  We thus define \((\beta^l_P)^n\) to be the epimorphism induced by the unversal property for cokerenels as in the commutative diagram
  \[\begin{tikzcd}
  0 \arrow{r} & \tau_{A} P^n \arrow{d}\arrow{r} & P^n \arrow{d}{\id_{P^n}}\arrow{rr}{\varepsilon_{A}(P^n)} && Q_{A} P^n \arrow{r}\arrow[dashed]{d}{\exists ! (\beta^l_P)^n} & 0\\
  0 \arrow{r} & \tau_{A \otimes_k B} P^n \arrow{r} & P^n\arrow{rr}{\varepsilon_{A \otimes_k B}(P^n)} && Q_{A \otimes_k B} P^n \arrow{r} & 0 
  \end{tikzcd}\]  We observe here that by the Snake Lemma, \((\beta^l_P)^n\) is an isomorphism if and only if \(\tau_{A \otimes_k B} P^n \cong \tau_A P^n\), which is equivalent by the remarks above to the condition that \(\tau_B \tau_A P^n = \tau_A P^n\).

  To see that \(\beta\) actually defines a morphism of complexes, we have by naturality of \(\varepsilon_A\), \(\varepsilon_{A \otimes_k B}\), and the commutative diagram defining \((\beta^l_P)^n\) above 
  \begin{eqnarray*}
    (\beta^l_P)^{n+1} \circ Q_{A}(d^n_{P}) \circ \varepsilon_{A}(P^n)
    &=& (\beta^l_P)^{n+1} \circ \varepsilon_{A}(P^{n+1}) \circ d^n_{P}\\
    &=& \varepsilon_{A \otimes_k B}(P^{n+1}) \circ d^n_{P}\\
    &=& Q_{A \otimes_k B}(d_{P}^n) \circ \varepsilon_{A \otimes_k B}(P^n)\\
    &=& Q_{A \otimes_k B}(d_{P}^n) \circ (\beta^l_P)^n \circ \varepsilon_{A}(P^n)
  \end{eqnarray*}
  implies
  \[(\beta^l_P)^{n+1} \circ Q_{A}(d^n_{P}) = Q_{A \otimes_k B}(d^n_{P}) \circ (\beta^l_P)^n\]
  because \(\varepsilon_{A \otimes_k B}(P^n)\) is epic. Hence we have a morphism
  \[\beta^l_P \colon \mathbf{R}Q_{A} P = Q_{A} P \to Q_{A \otimes_k B} P = \mathbf{R}Q_{A \otimes_k B} P.\]

  For naturality, we note that as the fibrant replacement is functorial if we have a morphism of bi-bi modules, then there is an induced morphism of complexes \(\varphi \colon P_1 \to P_2\) between the replacements and for each \(n\) a commutative diagram 
  \[\begin{tikzcd}[column sep=large]
  P_1^n \arrow{d}{\varphi^n}\arrow{r}{\varepsilon_{A}(P_1^n)} & Q_{A} P_1^n\arrow{d}{Q_{A}(\varphi^n)} \arrow{r}{(\beta^l_{P_1})^n} & Q_{A \otimes_k B}P_1^n \arrow{d}{Q_{A \otimes_k B}(\varphi^n)}\\
  P_2^n \arrow{r}{\varepsilon_{A}(P_2^n)} & Q_{A} P_2^n \arrow{r}{(\beta^l_{P_2})^n} & Q_{A \otimes_k B} P_2^n
  \end{tikzcd}\]  The left square commutes by naturality of \(\varepsilon_{A}\) and the right square commutes because
  \begin{eqnarray*}
    (\beta^l_{P_2})^n \circ Q_{A}(\varphi^n) \circ \varepsilon_{A}(P_1^n)
    &=& (\beta^l_{P_2})^n \circ \varepsilon_{A}(P_2^n) \circ \varphi^n\\
    &=&  \varepsilon_{A \otimes_k B}(P_2^n) \circ \varphi^n\\
    &=& Q_{A \otimes_k B}(\varphi^n) \circ \varepsilon_{A \otimes_k B}(P_1^n)\\
    &=& Q_{A \otimes_k B}(\varphi^n) \circ (\beta^l_{P_1})^n \circ \varepsilon_{A}(P_1^n)
  \end{eqnarray*}
  and \(\varepsilon_{A}(P_1^n)\) is epic.
\end{proof}

\begin{proposition} \label{proposition: bi-torsion is a composition}
  Assume that \(A\) and \(B\) are left Noetherian and Ext-finite. Then, we have natural quasi-isomorphisms 
  \begin{align*}
    \mathbf{R}Q_B(\beta^l_P) & : \mathbf{R}Q_B(\mathbf{R}Q_A P) \to \mathbf{R}Q_{A \otimes_k B} P \\
    \mathbf{R}Q_A(\beta^r_P) & : \mathbf{R}Q_A(\mathbf{R}Q_B P) \to \mathbf{R}Q_{A \otimes_k B} P.
  \end{align*}
  Consequently, \(\beta^l_P\) (respectively \(\beta^r_P\)) is an isomorphism if and only if \(\mathbf{R}Q_A P\) (respectively \(\mathbf{R}Q_B P\)) is \(Q_B\) (respectively \(Q_A\)) torsion-free.
\end{proposition}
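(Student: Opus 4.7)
\emph{Plan.} The strategy is to first establish a derived version of Corollary~\ref{corollary: relation on Qs}, namely a natural quasi-isomorphism $\mathbf{R}Q_{A \otimes_k B} \cong \mathbf{R}Q_B \circ \mathbf{R}Q_A$ (and symmetrically with the roles of $A$ and $B$ interchanged), to identify it with $\mathbf{R}Q_B(\beta^l_P)$, and then to deduce the ``consequently'' clause from the exact triangle of Lemma~\ref{lemma: exact triangles}. By symmetry it suffices to treat $\beta^l_P$.

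\emph{The main step (derived composition).} The identity $Q_{A \otimes_k B} = Q_B \circ Q_A$ lifts to right-derived functors by Grothendieck's theorem on composition, provided that $Q_A$ sends injective bimodules to $Q_B$-acyclic objects. Given an injective $I$ in $\Gr{(A \otimes_k B)}$, flatness of $A \hookrightarrow A \otimes_k B$ over the field $k$ makes $I$ injective as a graded $A$-module, and Lemma~\ref{lemma: Gr injectives} then splits $I \cong \tau_A I \oplus Q_A I$ as graded $A$-modules. Using the observation from the proof of Corollary~\ref{corollary: natural maps between Qs} that $\tau_A$ applied to a bimodule remains a sub-bimodule, together with the commutation of the $A$- and $B$-actions inside $A \otimes_k B$, this decomposition refines to a splitting in $\Gr{(A \otimes_k B)}$, so $Q_A I$ is a direct summand of the injective $I$ and hence an injective bimodule. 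Flat restriction along $B \hookrightarrow A \otimes_k B$ then makes $Q_A I$ injective as a graded $B$-module, and in particular $Q_B$-acyclic. Tracing the resulting natural isomorphism through a fibrant resolution $I$ of $P$ and comparing with the degreewise construction of $\beta^l_I$ in Corollary~\ref{corollary: natural maps between Qs} identifies it, after composing with the quasi-iso $\mathbf{R}Q_B \mathbf{R}Q_{A \otimes_k B} P \cong \mathbf{R}Q_{A \otimes_k B} P$ coming from idempotence of $\mathbf{R}Q_B$ (itself an instance of the same reasoning: by Lemma~\ref{lemma: Gr injectives}, $Q_B$ of an injective is an injective direct summand, hence $Q_B$-acyclic), with $\mathbf{R}Q_B(\beta^l_P)$.

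\emph{The ``consequently'' clause.} Consider the naturality square for the unit $\eta \colon \mathrm{Id} \to \mathbf{R}Q_B$ applied to $\beta^l_P$,
\[\begin{tikzcd}
\mathbf{R}Q_A P \arrow{r}{\beta^l_P}\arrow{d}{\eta_{\mathbf{R}Q_A P}} & \mathbf{R}Q_{A \otimes_k B} P \arrow{d}{\eta_{\mathbf{R}Q_{A \otimes_k B} P}}\\
\mathbf{R}Q_B \mathbf{R}Q_A P \arrow{r}{\mathbf{R}Q_B(\beta^l_P)} & \mathbf{R}Q_B \mathbf{R}Q_{A \otimes_k B} P.
\end{tikzcd}\]
Its bottom row is a quasi-iso by the first half of the proposition, and its right column is a quasi-iso by idempotence of $\mathbf{R}Q_B$ applied to the already-$Q_B$-local object $\mathbf{R}Q_{A \otimes_k B} P \cong \mathbf{R}Q_B \mathbf{R}Q_A P$. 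Commutativity therefore forces $\beta^l_P$ to be a quasi-iso if and only if $\eta_{\mathbf{R}Q_A P}$ is, which by the triangle $\mathbf{R}\tau_B N \to N \to \mathbf{R}Q_B N$ of Lemma~\ref{lemma: exact triangles} is in turn equivalent to $\mathbf{R}\tau_B(\mathbf{R}Q_A P) = 0$, i.e., to $\mathbf{R}Q_A P$ being $Q_B$-torsion-free.

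\emph{Main obstacle.} The subtlety is in promoting the $A$-module splitting $I \cong \tau_A I \oplus Q_A I$ of an injective bimodule to a splitting in $\Gr{(A \otimes_k B)}$. Splittings of short exact sequences are not canonical in general, so one must carefully exploit the commutation of the $A$- and $B$-actions and the specific structure of the $A$-torsion functor to produce a $B$-equivariant section. As a fallback, one could prove the derived composition directly from the colimit-of-Hom descriptions of $\mathbf{R}Q_\bullet$, leveraging tensor--Hom adjunction and the Noetherian hypothesis on $A$ and $B$ to commute $\mathbf{R}\mathrm{Hom}(B_{\geq n_2}, -)$ past the filtered hocolim in $n_1$.
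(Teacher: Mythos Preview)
Your approach is essentially the same as the paper's: replace $P$ by a fibrant object, use the underived identity $Q_{A\otimes_k B}=Q_B\circ Q_A$ from Corollary~\ref{corollary: relation on Qs}, and then invoke the comparison $\mathbf{R}Q_B\circ\mathbf{R}Q_A\to\mathbf{R}(Q_B\circ Q_A)$. The paper is much terser here---it simply cites \cite[Theorem~1.3.7]{Hov98} for this comparison being an isomorphism and does not unpack the acyclicity hypothesis---whereas you attempt to verify directly that $Q_A$ sends injective bimodules to $Q_B$-acyclics. The technical point you flag (promoting the $A$-module splitting $I\cong\tau_A I\oplus Q_A I$ to a bimodule splitting) is real and is exactly what the paper's citation is packaging; your fallback via the colimit-of-Hom description is a legitimate alternative route. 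Your identification of the resulting quasi-isomorphism with $\mathbf{R}Q_B(\beta^l_P)$, and your naturality-square derivation of the ``consequently'' clause, are both more explicit than what the paper writes down, which simply asserts the consequence without further argument.
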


\begin{proof}
  As above, we can replace \(P\) with a quasi-isomorphic fibrant object, so it suffices to assume that \(P\) is fibrant.
  We see from Corollary~\ref{corollary: relation on Qs} that
  \[\mathbf{R}Q_{A \otimes_k B} P \cong Q_{A \otimes_k B} P \cong Q_B \circ Q_A P \cong \mathbf{R}(Q_B \circ Q_A) P\]
  The result now follows from the natural isomorphism (see, e.g., \cite[Theorem 1.3.7]{Hov98})
  \[\mathbf{R}Q_B \circ \mathbf{R}Q_A \to \mathbf{R}(Q_B \circ Q_A)\]
\end{proof}

Using the standard homological assumptions above, one has better statements for \(P = \Delta\). 

\begin{proposition} \label{proposition: when beta is an isomorphism}
  Let \(A\) be left (respectively, right) Noetherian and assume that the condition \(\chi^\circ(A)\) holds (respectively, as an \(A^\opp\)-module).
  Then the morphism \(\beta^l_{\Delta}\) (respectively, \(\beta^r_{\Delta}\)) of Corollary~\ref{corollary: natural maps between Qs} is a quasi-isomorphism.
  
\end{proposition}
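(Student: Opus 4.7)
My plan is to invoke Proposition~\ref{proposition: bi-torsion is a composition}: the morphism $\beta^l_\Delta$ is a quasi-isomorphism if and only if $\mathbf{R}Q_A \Delta$ is $Q_B$-torsion-free, i.e.\ $\mathbf{R}\tau_B \mathbf{R}Q_A \Delta \simeq 0$ in the derived category. I will reduce this vanishing to a statement about the bimodule $A$ itself and then analyze it via the defining triangle.

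First, I decompose $\Delta \cong \bigoplus_{j \in \Z} A(j)$ as a graded left $A$-module, where $A(j)$ is the shift with $(A(j))_i = A_{i+j}$. Since $\chi^\circ(A)$ together with finite generation implies $A$ is Ext-finite (by \cite[Proposition 3.8]{AZ}), Proposition~\ref{proposition: derived Q commutes with coproducts} lets me commute both $\mathbf{R}Q_A$ and $\mathbf{R}\tau_{A^\opp}$ through the coproduct, yielding
\[
\mathbf{R}\tau_B \mathbf{R}Q_A \Delta \;\simeq\; \bigoplus_{j \in \Z} \bigl(\mathbf{R}\tau_{A^\opp} \mathbf{R}Q_A A\bigr)(j).
\]
So it suffices to show $\mathbf{R}\tau_{A^\opp} \mathbf{R}Q_A A \simeq 0$, where now $A$ is viewed as an $A$-bimodule and the two local cohomology functors act on the two sides.

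Next, I apply $\mathbf{R}\tau_{A^\opp}$ to the defining triangle $\mathbf{R}\tau_A A \to A \to \mathbf{R}Q_A A$ of $A$-bimodules, obtaining
\[
\mathbf{R}\tau_{A^\opp} \mathbf{R}\tau_A A \longrightarrow \mathbf{R}\tau_{A^\opp} A \longrightarrow \mathbf{R}\tau_{A^\opp} \mathbf{R}Q_A A,
\]
so the target vanishing is equivalent to the first map being a quasi-isomorphism. Under $\chi^\circ(A)$, each $\mathbf{R}^n \tau_A A$ has right-limited grading, so every homogeneous element is annihilated by $A_{\geq N}$ on the right once $N$ exceeds the grading bound minus the element's degree; combined with the splitting for injectives in Lemma~\ref{lemma: Gr injectives} and a standard hypercohomology spectral sequence argument, this shows $\mathbf{R}\tau_{A^\opp}$ acts as the identity on $\mathbf{R}\tau_A A$, so the counit $\mathbf{R}\tau_{A^\opp} \mathbf{R}\tau_A A \xrightarrow{\sim} \mathbf{R}\tau_A A$ is a quasi-isomorphism.

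The technical heart of the argument is then to show that the resulting composite $\mathbf{R}\tau_A A \to \mathbf{R}\tau_{A^\opp} A$ is a quasi-isomorphism, equivalently that the cofiber $\mathbf{R}\tau_{A^\opp} \mathbf{R}Q_A A$ vanishes. At the underived level, $\chi^\circ(A)$ already forces the inclusion $\tau_A A \subseteq \tau_{A^\opp} A$: for $x \in \tau_A A$ and any $b \in A$, the element $xb$ still lies in $\tau_A A$, and the right-limited grading forces $xb = 0$ once $\deg(xb)$ exceeds the grading bound, whence $x \in \tau_{A^\opp} A$. Promoting this underived inclusion to a derived quasi-isomorphism is the main obstacle, since we have no corresponding hypothesis on $A^\opp$: one must track, via the hypercohomology spectral sequence for $\mathbf{R}\tau_{A^\opp}$ applied to the complex $\mathbf{R}Q_A A$, how the right-torsion contributions in higher cohomology of $\mathbf{R}Q_A A$ are precisely cancelled by the derived right-local cohomology of the degree-zero piece---a cancellation forced by the very triangle $\mathbf{R}\tau_A A \to A \to \mathbf{R}Q_A A$ with which we began.
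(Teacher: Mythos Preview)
Your overall strategy matches the paper's: reduce $\beta^l_\Delta$ being a quasi-isomorphism to the vanishing $\mathbf{R}\tau_{A^\opp}\mathbf{R}Q_A\Delta \simeq 0$, attack this via the triangle $\mathbf{R}\tau_A \to \op{Id} \to \mathbf{R}Q_A$, and use $\chi^\circ(A)$ to see that $\mathbf{R}\tau_{A^\opp}$ acts as the identity on $\mathbf{R}\tau_A\Delta$ because its cohomology is right-limited. Up to this point you and the paper agree.

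The gap is in your final step. After establishing $\mathbf{R}\tau_{A^\opp}\mathbf{R}\tau_A A \xrightarrow{\sim} \mathbf{R}\tau_A A$, you reformulate what remains as showing that the induced map $\mathbf{R}\tau_A A \to \mathbf{R}\tau_{A^\opp} A$ is a quasi-isomorphism, and then you do not prove it: you yourself call it ``the main obstacle'' and offer only a hand-wave about spectral-sequence cancellations. That is not a proof, and worse, the reformulation forces you to control $\mathbf{R}\tau_{A^\opp}A$ directly, which genuinely requires right-sided information you have not assumed. The paper sidesteps this entirely. It never compares $\mathbf{R}\tau_A$ with $\mathbf{R}\tau_{A^\opp}$; instead it compares $\mathbf{R}\tau_A\Delta$ with $\mathbf{R}\tau_{A\otimes_k A^\opp}\Delta \cong \mathbf{R}\tau_{A^\opp}(\mathbf{R}\tau_A\Delta)$. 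The quasi-isomorphism $\mathbf{R}\tau_{A^\opp}(\mathbf{R}\tau_A\Delta) \xrightarrow{\sim} \mathbf{R}\tau_A\Delta$ that you already proved \emph{is} the statement $\mathbf{R}\tau_A\Delta \cong \mathbf{R}\tau_{A\otimes_k A^\opp}\Delta$, and comparing the two fundamental triangles for $\Delta$ then gives $\mathbf{R}Q_A\Delta \cong \mathbf{R}Q_{A\otimes_k A^\opp}\Delta$ immediately. No separate analysis of $\mathbf{R}\tau_{A^\opp}\Delta$ or $\mathbf{R}\tau_{A^\opp}A$ is needed.

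A smaller issue: your reduction from $\Delta$ to the singly-graded bimodule $A$ by ``commuting $\mathbf{R}\tau_{A^\opp}$ through the coproduct $\Delta \cong \bigoplus_j A(j)$'' is not valid as written. That decomposition is as \emph{left} $A$-modules; the right $A$-action permutes the summands, so $\mathbf{R}\tau_{A^\opp}$ does not see it as a coproduct. The correct reduction goes via the identification $(\mathbf{R}Q_A\Delta)_{x,\ast} \cong (\mathbf{R}Q_A A)(x)$ of right $A$-modules, but the paper simply works with $\Delta$ throughout and avoids the detour.
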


\begin{proof}
  We have a triangle in \(\mathrm{D}(\Gr{A \otimes_k A^\opp})\)
  \[\mathbf{R}\tau_{A^\opp} (\mathbf{R}Q_A \Delta) \to \mathbf{R}Q_A \Delta \to \mathbf{R}Q_{A^\opp} (\mathbf{R}Q_A \Delta) \to \mathbf{R}\tau_{A^\opp} (\mathbf{R}Q_A \Delta)[1].\]
  By Proposition~\ref{proposition: bi-torsion is a composition}, \(\mathbf{R}Q_{A^\opp}(\mathbf{R}Q_A \Delta) \cong \mathbf{R}Q_{A \otimes_k A^\opp} \Delta\), so it suffices to show that we have \(\mathbf{R}\tau_{A^\opp}(\mathbf{R}Q_A \Delta) = 0\).
  Applying \(\mathbf{R}\tau_{A^\opp}\) to the triangle
  \[\mathbf{R}\tau_A \Delta \to \Delta \to \mathbf{R}Q_A \Delta \to \mathbf{R}\tau_A \Delta[1]\]
  we obtain the triangle
  \[\mathbf{R}\tau_{A^\opp} (\mathbf{R}\tau_A \Delta) \to \mathbf{R}\tau_A \Delta \to \mathbf{R}\tau_{A^\opp} (\mathbf{R}Q_A \Delta) \to \mathbf{R}\tau_{A^\opp} (\mathbf{R}\tau_A \Delta)[1]\]
  and so we are reduced to showing that
  \[\mathbf{R}\tau_A \Delta \cong \mathbf{R}\tau_{A \otimes_k A^\opp}\Delta \cong \mathbf{R}\tau_{A^\opp} (\mathbf{R}\tau_A \Delta)\]
  which then implies that \(\mathbf{R}\tau_A^\opp(\mathbf{R}Q_A \Delta) = 0\), as desired.

  First we note that for any bi-bi module, \(P\), the natural morphism
  \[\mathbf{R}\tau_{A^\opp} P \to P\]
  is a quasi-isomorphism if and only if the natural morphism
  \[\mathbf{R}\tau_{A^\opp} P_{x,\ast} \to P_{x,\ast}\]
  is a quasi-isomorphism.
  Moreover, for a right \(A\)-module, \(M\), if \(H^j(M)\) is right limited for each \(j\) then \(\mathbf{R} \tau_{A^\opp} M \to M\) is a quasi-isomorphism,
  so it suffices to show that \((\mathbf{R}^j \tau_A \Delta)_{x,\ast}\) has right limited grading for each \(x\) and \(j\).
  Now, by \cite[Cor. 3.6 (3)]{AZ}, for each \(j\)
  \[\mathbf{R}^j\tau_A(\Delta)_{x,y} = \mathbf{R}^j\tau_A(\Delta_{\ast,y})_x = \mathbf{R}^j\tau_A(A(y))_x = 0\]
  for fixed \(x\) and sufficiently large \(y\).
  This implies that the natural morphism
  \[\mathbf{R}\tau_{A^\opp}(\mathbf{R}\tau_A(\Delta)_{x,\ast}) \to \mathbf{R}\tau_A\Delta_{x,\ast}\]
  is a quasi-isomorphism, as desired.
\end{proof}

Similar hypotheses of Proposition~\ref{proposition: when beta is an isomorphism} will appear often so we attach a name. 

\begin{definition} \label{definition: delightful couple}
  Let \(A\) and \(B\) be connected graded \(k\)-algebras. If \(A\) is Ext-finite, left and right Noetherian, and satisfies \(\chi^\circ(A)\) and \(\chi^\circ(A^\opp)\) then we say that \(A\) is \textbf{delightful}. If \(A\) and \(B\) are both delightful, then we say that \(A\) and \(B\) form a \textbf{delightful couple}. 
\end{definition}

\subsection{Segre Products}
\begin{definition}\label{def: segre product}
  Let \(A\) and \(B\) be connected graded \(k\)-algebras.
  The \textbf{Segre product} of \(A\) and \(B\) is the graded \(k\)-algebra
  \[ A \times_k B = \bigoplus_{0 \leq i} A_i \otimes_k B_i.\]
\end{definition}

\begin{proposition}\label{proposition: segre product of generated in degree 1 is generated in degree 1}
  If \(A\) and \(B\) are connected graded \(k\)-algebras that are finitely generated in degree one, then \(A \times_k B\) is finitely generated in degree one.
\end{proposition}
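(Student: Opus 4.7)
The plan is to verify this directly from the explicit description of the Segre product and its multiplication. Unlike many results in the excerpt, there is no real obstacle here; the statement is essentially a bookkeeping exercise, and the main thing is to identify the right generating set and check that products in $A \times_k B$ indeed produce all homogeneous elements.

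First I would record the key fact about the multiplication in $A \times_k B$: since the graded piece in degree $i$ is $A_i \otimes_k B_i$ and multiplication is inherited from $A \otimes_k B$, the product of two homogeneous elements is given by
\[
(a \otimes b)(a' \otimes b') = (aa') \otimes (bb'),
\]
where $a \in A_i$, $b \in B_i$, $a' \in A_j$, $b' \in B_j$, so that the result lies in $A_{i+j} \otimes_k B_{i+j} = (A \times_k B)_{i+j}$.

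Next, I would use the hypothesis that $A$ and $B$ are generated in degree one to write, for any $i \geq 1$, a spanning set for $A_i$ consisting of $i$-fold products $a_1 a_2 \cdots a_i$ with each $a_r \in A_1$, and similarly for $B_i$. Then $A_i \otimes_k B_i$ is spanned by pure tensors of the form $(a_1 \cdots a_i) \otimes (b_1 \cdots b_i)$, and by the multiplication formula above these are precisely the products $(a_1 \otimes b_1)(a_2 \otimes b_2) \cdots (a_i \otimes b_i)$ in $A \times_k B$. Hence every homogeneous element of positive degree is a $k$-linear combination of products of elements of $(A \times_k B)_1 = A_1 \otimes_k B_1$, which shows generation in degree one.

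Finally, for finite generation, I would observe that if $A$ is finitely generated by elements of degree one, then $A_1$ is spanned as a $k$-vector space by those finitely many generators (since $A_0 = k$ and no product of positive-degree generators lands in degree one), so $A_1$ is finite-dimensional; the same holds for $B_1$. Therefore $A_1 \otimes_k B_1$ is finite-dimensional over $k$, and any $k$-basis provides a finite set of degree-one generators of $A \times_k B$ as a $k$-algebra.
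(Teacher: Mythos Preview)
Your proof is correct and follows essentially the same approach as the paper: both arguments reduce to the identity $(a_1\cdots a_d)\otimes(b_1\cdots b_d)=(a_1\otimes b_1)\cdots(a_d\otimes b_d)$ in the Segre product. The only cosmetic difference is that the paper works directly with fixed finite generating sets $\{x_i\}\subseteq A_1$ and $\{y_j\}\subseteq B_1$ and concludes with the explicit generating set $\{x_i\otimes y_j\}$, whereas you first establish generation by all of $A_1\otimes_k B_1$ and then argue finite-dimensionality separately; the content is the same.
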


\begin{proof}
  Let \(S = \{x_i\}_{i = 1}^r \subseteq A_1\) and \(T = \{y_i\}_{i = 1}^s \subseteq B_1\) be generators.
  Take a homogenous element \(a \otimes b \in A_d \otimes_k B_d\).
  We can write
  \[a = \sum_{i = 1}^m \alpha_i X_1^{(i)} \cdots X_d^{(i)}\ \text{and}\ b = \sum_{j = 1}^n \beta_j Y_1^{(j)} \cdots Y_d^{(j)}\]
  for \(\alpha_i,\beta_j \in k\), \((X_1^{(i)}, \ldots, X_d^{(i)}) \in \prod_{i = 1}^d S\), and \((Y_1^{(j)}, \ldots, Y_d^{(j)}) \in \prod_{i = 1}^d T\).
  Hence we have
  \begin{eqnarray*}
    a \otimes b &=& \left(\sum_{i=1}^m \alpha_i X_1^{(i)} \cdots X_d^{(i)}\right) \otimes \left(\sum_{j = 1}^n \beta_j Y_1^{(j)} \cdots Y_d^{(j)}\right)\\
    &=& \sum_{i = 1}^m\left(\alpha_i X_1^{(i)} \cdots X_d^{(i)} \otimes \left(\sum_{j = 1}^n \beta_j Y_1^{(j)} \cdots Y_d^{(j)}\right)\right)\\
    &=& \sum_{i = 1}^m\left(\sum_{j = 1}^n\left(\alpha_i  X_1^{(i)} \cdots X_d^{(i)} \otimes \beta_j Y_1^{(j)} \cdots Y_d^{(j)}\right)\right)\\
    &=& \sum_{i,j} \alpha_i\beta_j (X_1^{(i)} \otimes Y_1^{(j)}) \cdots (X_d^{(i)} \otimes Y_d^{(j)})
  \end{eqnarray*}
    Therefore \(A \times_k B\) is finitely generated in degree one by \(\{x_i \otimes y_j\}_{i,j}\).
\end{proof}

As a nice corollary, we can relax the conditions on \cite[Theorem 2.4]{VR96} to avoid the Noetherian conditions on the Segre and tensor products.

\begin{theorem}[{\cite[Theorem 2.4]{VR96}}]\label{theorem: Van Rompay}
    Let \(A\) and \(B\) be finitely generated, connected graded \(k\)-algebras, and let \(S = A \times_k B\), \(T = A \otimes_k B\).
    If \(A\) and \(B\) are both generated in degree one, then there is an equivalence of categories
    \[\begin{tikzcd}[row sep=tiny]
    \mathbb{V} \colon \QGr{S} \arrow{r} & \QGr{T}\\
    E \arrow[mapsto]{r} & \pi_{T}\left(T \otimes_S \omega_{S}E\right)
    \end{tikzcd}\]
\end{theorem}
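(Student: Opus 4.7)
The plan is to carry out Van Rompay's argument from \cite{VR96}, replacing the Noetherian hypothesis on \(S\) and \(T\) by finite generation in degree one, which is now available via Proposition~\ref{proposition: segre product of generated in degree 1 is generated in degree 1}. I would start by constructing the adjunction induced by the Segre inclusion \(\iota \colon S \hookrightarrow T\): restriction of scalars \(\iota^{\ast} \colon \Gr{T} \to \Gr{S}\) admits the left adjoint \(\iota_{!} = T \otimes_{S} -\). The candidate quasi-inverse of \(\mathbb{V}\) would then be the functor \(F \mapsto \pi_{S}(\iota^{\ast}\omega_{T} F)\), which is what the descent of \(\iota^{\ast}\) to the quotient categories should give.

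The main step is to show that both functors of the adjoint pair preserve torsion, so that the adjunction passes to the quotients \(\QGr{S}\) and \(\QGr{T}\). That \(\iota^{\ast}\) preserves torsion is immediate from Lemma~\ref{lemma: alternate char of bibi torsion}: if \((A \otimes_k B)_{\geq n_1, \geq n_2} m = 0\), then taking \(n \geq \max(n_1,n_2)\) yields \(S_{\geq n} m = 0\) in \(\iota^{\ast} M\). The interesting direction is preservation of torsion by \(\iota_{!}\), and this is where finite generation in degree one is essential. Since \(S\) is generated in degree one by Proposition~\ref{proposition: segre product of generated in degree 1 is generated in degree 1}, \(S\)-torsion for an element \(e\) reduces to \(S_{\geq n} e = 0\) for some \(n\). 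Using degree-one generation of \(A\) and \(B\), any pure tensor in \((A \otimes_k B)_{\geq N, \geq N}\) with \(N \geq n\) can be expressed, by the same monomial-rearrangement argument used in Proposition~\ref{proposition: segre product of generated in degree 1 is generated in degree 1}, as a sum of products \(s \cdot t\) with \(s \in S_{\geq n}\) and \(t \in T\); pushing this relation through \(T \otimes_{S} E\) gives \((A \otimes_k B)_{\geq N, \geq N}(1 \otimes e) = 0\), as needed.

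Once the adjunction descends, I would check that the induced unit and counit are isomorphisms in the quotient categories. Concretely, one verifies that the kernel and cokernel of the unit \(E \to \iota^{\ast}(T \otimes_{S} E)\) are \(S\)-torsion, and similarly for the counit \(T \otimes_{S} \iota^{\ast} F \to F\) on the \(T\)-side. The essential input is that, in every sufficiently high multi-degree \((i,j)\) with \(i,j \geq N\), the map \(S_{i-j} \otimes_k T_{(j,j)} \to T_{(i,j)}\) (if \(i \geq j\), say) becomes surjective after torsion is discarded; this again is the generated-in-degree-one hypothesis packaged for use in the graded Morita comparison that powers Van Rompay's argument.

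The principal obstacle is the step on preservation of torsion by \(\iota_{!}\): Van Rompay originally sidesteps this by invoking Noetherianity of \(S\) and \(T\), whereas here one must supply an explicit degree-rearrangement combinatorics, which is precisely what Proposition~\ref{proposition: segre product of generated in degree 1 is generated in degree 1} provides. After that, the remaining verifications of the unit/counit being isomorphisms in \(\QGr\) are essentially formal and track the classical Serre--Veronese comparison, with \(S\) and \(T\) playing the role of a ring and its Veronese-type subring relative to the Segre embedding.
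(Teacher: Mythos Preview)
Your proposal is more elaborate than the paper's proof, and it rests on a mislocation of the Noetherian hypothesis in Van Rompay's original argument. The paper's proof is two sentences: it records (citing Van Rompay's own remarks preceding \cite[Theorem~2.4]{VR96}) that the Noetherian assumptions on \(S\) and \(T\) were used \emph{only} to guarantee that \(\Tors{S}\) and \(\Tors{T}\) are Serre subcategories, so that \(\QGr{S}\) and \(\QGr{T}\) are defined; once Proposition~\ref{proposition: f.g. torsion} and Lemma~\ref{lemma: alternate char of bibi torsion} supply this under finite generation in degree one, Van Rompay's argument runs verbatim with no further modification. In particular, the torsion-preservation step for \(T \otimes_S -\) that you single out as ``the principal obstacle'' is not where Noetherianity entered in \cite{VR96}; your degree-rearrangement patch there is extra work rather than a genuine repair.

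A smaller point: your candidate quasi-inverse \(F \mapsto \pi_S(\iota^\ast \omega_T F)\) is not quite right as stated. Literal restriction of scalars along \(\iota \colon S \hookrightarrow T\) does not land in \(\Gr{S}\), since a \(\Z^2\)-graded \(T\)-module does not become \(\Z\)-graded by forgetting the \(T\)-action. The correct right adjoint to \(T \otimes_S -\) is the diagonal functor \(F \mapsto \bigoplus_i F_{i,i}\), and this is what Van Rompay uses. Your sketch would go through once this is corrected, but the resulting argument is then just a re-exposition of \cite{VR96}, which is exactly what the paper invokes by reference.
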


\begin{proof}
  As noted in Van Rompay's comments preceding the Theorem, the hypothesis is necessary only to ensure that \(\QGr{S}\) and \(\QGr{T}\) are well-defined.
  Thanks to Proposition~\ref{proposition: f.g. torsion} and Lemma~\ref{lemma: alternate char of bibi torsion}, the equivalence follows by running the same argument.
\end{proof}

\subsection{A Comparison with the Commutative Situation}

To provide a touchstone for the reader, we interpret the definitions and results when \(A\) and \(B\) are commutative and finitely-generated by elements of weight \(1\). Then, \(A = k[x_1,\ldots,x_n]/I_A\) and \(B = k[y_1,\ldots,y_m]/I_B\) for some homogenous ideals \(I_A,I_B\). So \(\op{Spec} A\) is a closed \(\mathbf{G}_m\)-stable subscheme of affine space \(\mathbf{A}^n\) and similarly for \(\op{Spec} B\). Let \(X\) and \(Y\) be the associated projective schemes. Then, 
\begin{displaymath}
  \op{Spec} A \otimes_k B \subset \mathbf{A}^{n+m}
\end{displaymath}
is \(\mathbf{G}_m^2\)-stable. The category \(\Gr{A \otimes_k B}\) is equivalent to the \(\mathbf{G}_m^2\)-equivariant quasi-coherent sheaves on \(\op{Spec} A \otimes_k B\) with \(\Tors{A \otimes_k B}\) being those modules supported on the subscheme corresponding to \((x_1,\ldots,x_n)(y_1,\ldots,y_m)\). Descent then gives that 
\begin{displaymath}
  \QGr{\left(A \otimes_k B\right)} \cong \op{Qcoh}(X \times Y). 
\end{displaymath}

\subsection{Graded Morita Theory} \label{subsection: graded Morita}

This section demonstrates how the tools of dg-categories yield a nice perspective on derived graded Morita. Compare with the well-known graded Morita statement in \cite{Zhang}. 

In order to utilize the machinery of dg-categories, we must first translate chain complexes of graded modules into dg-categories.
While one can na\"ively regard this category as a dg-category by way of an enriched Hom entirely analogous to the ungraded situation, the relevant statements of \cite{Toen} are better suited to the perspective of functor categories.
As such, we adapt the association of a ringoid with one object to a ring from Section~\ref{subsection: dg modules} to the graded situation, considering instead a ringoid with multiple objects.

Throughout this section, we will let \(G = (G, +)\) be an abelian group, and let \(A\) and \(B\) be not necessarily commutative \(G\)-graded algebras over \(k\).
We will generally be concerned with the groups \(\Z\) and \(\Z^2\).
In the sequel, there will be many instances where there are two simultaneous gradings on an object: homological degree and homogenous degree. 
We avoid the latter term, preferring weight, and use degree solely when referring to homological degree.

For clarity, consider the example of a complex of \(G\)-graded left \(A\)-modules, \(M\).
The degree \(n\) piece of \(M\) is the \(G\)-graded left \(A\)-module \(M^n\).
The weight \(g\) piece of the graded module \(M^n\) is the \(A_0\)-module of homogenous elements of (graded) degree \(g\), \(M^n_g\).
Note that in this terminology, the usual morphisms of graded modules are the weight zero morphisms.

\begin{definition}
  Denote by \(\ldgGrMod{A}\) the dg-category with objects chain complexes of \(G\)-graded left \(A\)-modules and morphisms defined as follows.
  
  We say that a morphism \(f \colon M \to N\) of degree \(p\) is a collection of morphisms \(f^n \colon M^n \to N^{n+p}\) of weight zero.
  We denote by \(\ldgGrMod{A}\left(M,N\right)^p\) the collection of all such morphisms, which we equip with the differential
  \[d(f) = d_N \circ f + (-1)^{p+1}f \circ d_M\]
  and define \(\ldgGrMod{A}(M,N)\) to be the resulting chain complex.
  Composition is the usual composition of graded morphisms.

  We denote by \(\rdgGrMod{A}\) the same construction with \(G\)-graded right \(A\)-modules, which are equivalently left modules over the opposite ring, \(A^\opp\).
\end{definition}

\begin{remark}
  One should note that the closed morphisms are precisely the morphisms of complexes \(M \to N[p]\) and, in particular, the closed degree zero morphisms are precisely the usual morphisms of complexes.
\end{remark}

\begin{definition}
  To each \(G\)-graded \(k\)-algebra, \(A\), associate the category \(\A\) with objects the group \(G\) and morphisms given by
  \[\A(g_1, g_2) = A_{g_2 - g_1}\]
  and composition defined by the multiplication \(A_{g_2 - g_1}A_{g_3 - g_2} \subseteq A_{g_3 - g_1}\).

  We regard \(\A\) as a dg-category by considering the \(k\)-module of morphisms, \(\A(g_1, g_2)\), as the complex with \(A_{g_2 - g_1}\) in degree 0 and zero differential.
\end{definition}

\begin{lemma}\label{lem:GrModAsMod}
  Let \(G\) be an abelian group.
  If \(A\) is a \(G\)-graded algebra over \(k\) and \(\A\) the associated dg-category, then there is an isomorphism of dg-categories
  \[\ldgGrMod{A} \cong \dgMod{\A}.\]

  \begin{proof}
    We first construct a dg-functor \(F \colon \ldgGrMod{A} \to \dgMod{\A}\).
    For each \(g \in G\), denote by \(A(g)[0]\) the complex with \(A(g)\) in degree zero and consider the full subcategory of \(\ldgGrMod{A}\) of all such complexes.
    We see that a morphism \[f \in \ldgGrMod{A}(A(g)[0],M)^n\] is just the data of a morphism \(f^0 \colon A(g) \to M^n\) which gives
    \[\ldgGrMod{A}(A(g)[0],M)^n \cong \Gr{A}(A(g), M^n) \cong M^n_{-g}\]
    and hence \(M_{-g} := \ldgGrMod{A}(A(g)[0], M)\) is the complex with \(M^n_{-g}\) in degree \(n\).
    In particular, when \(M = A(h)[0]\), we have
    \[\ldgGrMod{A}(A(g)[0],A(h)[0]) := A(h)[0]_{-g} = \A(g,h),\]
    which allows us to identify this subcategory with \(\A\) via the enriched Yoneda embedding, \(A(h)[0]\) corresponding to the representable functor \(\A(-,h)\).
    Using this identification, we can define the image of \(M\) in \(\dgMod{\A}\) to be the dg-functor that takes an object \(g \in G\) to
    \[M_{-g} = \ldgGrMod{A}(A(g)[0], M)\]
    with structure morphism
    \[\A(g,h) \cong \ldgGrMod{A}(A(g)[0], A(h)[0]) \to \CH{k}(M_{-h}, M_{-g})\]
    induced by the representable functor \(\ldgGrMod{A}(-, M)\).
    The image of a morphism \(f \in \ldgGrMod{A}(M,N)\) is defined to be the natural transformation given by the collection of morphisms
    \[h^{A(-g)[0]}(f) \colon \ldgGrMod{A}(A(-g)[0], M) \to \ldgGrMod{A}(A(-g)[0],N)\]
    indexed by \(G\).

    Conversely, we note that the data of a functor \(M \colon \A^\opp \to \CH{k}\) is a collection of chain complexes, \(M_g := M(g)\), indexed by \(G\) and morphisms of complexes
    \[\begin{tikzcd}
    \cdots \arrow{r} & A_{g - h} \arrow{r}\arrow{d} & 0 \arrow{r}\arrow{d} & \cdots\\
    \cdots \arrow{r} & \CH{k}(M_g,M_h)^0 \arrow{r} & \CH{k}(M_g, M_h)^1 \arrow{r} & \cdots
    \end{tikzcd}\]    The non-zero arrow factors through \(Z^0(\CH{k}(M_g, M_h))\), so the structure morphism is equivalent to giving a morphism
    \[A_{g - h} \to \CH{k}(M_g,M_h)\]
    and thus \(M\) determines a complex of graded \(A\)-modules
    \[\widetilde{M} = \bigoplus_{g \in G}M_{-g}.\]
    A morphism \(\eta \colon M \to N\) is simply a collection of natural transformations \(\eta^p\) such that for each \(g \in G\) we have \(\eta^p(g) \in \CH{k}(M_g,N_g)^p\) and the naturality implies that \(\eta^p(g)\) is \(A\)-linear.
    The natural transformation \(\eta^p\) thus determines a morphism
    \[\bigoplus_{g \in G} \eta^p(-g) \in \ldgGrMod{A}\left(\widetilde{M}, \widetilde{N}\right)^p,\]
    and hence \(\eta\) determines a morphism in \(\ldgGrMod{A}\left(\widetilde{M}, \widetilde{N}\right)\), which is the collection of all such homogenous components.
    This defines a dg-functor \(\dgMod{\A} \to \ldgGrMod{A}\) which is clearly the inverse of \(F\).
  \end{proof}
\end{lemma}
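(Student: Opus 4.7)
The plan is to construct an explicit dg-functor $F \colon \ldgGrMod{A} \to \dgMod{\A}$ whose inverse is given by direct summation over the grading group. The guiding principle is that the shifted rank-one free complexes $A(g)[0] \in \ldgGrMod{A}$ should correspond to the representable functors $\A(-,g) \in \dgMod{\A}$ under the enriched Yoneda embedding; this heuristic essentially dictates every formula.

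First I would unravel the data on each side. A complex $M$ of $G$-graded left $A$-modules gives, for every $g \in G$, a chain complex $M_{-g}$ whose degree $n$ part is the weight $-g$ piece $M^n_{-g}$; the key observation is the natural identification $M_{-g} \cong \ldgGrMod{A}(A(g)[0], M)$, which realizes $M_{-g}$ as a hom-complex. A dg $\A$-module is a dg-functor $N \colon \A^{\opp} \to \CH{k}$, which is precisely the data of chain complexes $\{N_g\}_{g \in G}$ together with chain maps $A_{g-h} = \A(g,h) \to \CH{k}(N_g, N_h)$ that are associative with respect to multiplication in $A$. So I define $F(M)$ to be the dg-functor $g \mapsto M_{-g}$ with structure maps induced by the contravariant hom functor $\ldgGrMod{A}(-,M)$ applied to the subcategory spanned by the $A(g)[0]$, which under the Yoneda identification is exactly $\A$. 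On morphisms, $F$ sends $f \in \ldgGrMod{A}(M,N)^p$ to the natural transformation with components $h^{A(-g)[0]}(f)$.

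For the inverse, given a dg-functor $N \colon \A^{\opp} \to \CH{k}$, I set $\widetilde{N} = \bigoplus_{g \in G} N_{-g}$, giving it the graded left $A$-module structure coming from the structure maps of $N$: an element $a \in A_{h-g} = \A(g,h)$ acts on $x \in N_{-g}$ by $N(a)(x) \in N_{-h}$. The structure maps are chain maps, so this is compatible with the differentials on each $N_g$; associativity of the $A$-action follows from functoriality of $N$. A morphism $\eta \colon N \to N'$ of degree $p$ consists of components $\eta(g) \in \CH{k}(N_g, N'_g)^p$; summing these yields a degree $p$ morphism $\widetilde{N} \to \widetilde{N'}$, and the naturality squares translate into $A$-linearity of each $\eta(g)$.

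The last step is to check these two assignments are mutually inverse and preserve the dg-structure on hom-complexes, i.e.\ that they commute with the differentials defined by $d(f) = d \circ f + (-1)^{p+1} f \circ d$ and with composition. Both checks are essentially formal: after the Yoneda identification, the two constructions simply reindex the same underlying data. The main obstacle, and the only place where one must really work, is keeping the sign and index conventions straight — in particular, the switch between $g$ and $-g$ arising from the variance of $\dgMod{\A}$ and the convention that $A(g)_h = A_{g+h}$; once this bookkeeping is fixed, the verification that the constructions are mutually inverse and compatible with differentials and composition is forced by the enriched Yoneda lemma.
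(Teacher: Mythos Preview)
Your proposal is correct and follows essentially the same approach as the paper's proof: both construct $F$ via the identification $M_{-g} \cong \ldgGrMod{A}(A(g)[0],M)$ and the Yoneda embedding of the full subcategory on the $A(g)[0]$ into $\A$, and both build the inverse by forming $\widetilde{N} = \bigoplus_{g \in G} N_{-g}$ with the $A$-action read off from the structure maps. Your write-up is slightly more explicit about the $A$-action on $\widetilde{N}$ and the final compatibility checks, but the argument is the same.
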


\begin{remark}
  It is worth noting that it is natural from the ringoid perspective to reverse the weighting on the opposite ring in that, formally,
  \[A^\opp_{g} = \A^\opp(0,g) = \A(g,0) = A_{-g}\]
  so that \(\A^\opp(-, h) = \A(h,-)\) is the representable functor corresponding to the left module \(A^\opp(h)\) by
  \[\bigoplus_{g \in G} \A^\opp(-g,h) = \bigoplus_{g \in G} \A(h,-g) = \bigoplus_{g \in G} A_{-(g + h)} = \bigoplus_{g \in G} A^\opp_{g + h} = A^\opp(h).\]
  With this convention, when considering right modules, one can dispense with the formality of the opposite ring by constructing from a complex, \(M\), the dg-functor \(\A \to \CH{k}\) mapping \(g\) to \(M_g := \rdgGrMod{A}(A(-g)[0], M)\).
\end{remark}
When \(G = \Z^2\), and \(A\), \(B\) are \(\Z\)-graded algebras over \(k\), we denote the dg-category of chain complexes of \(G\)-graded \(B\)-\(A\)-bimodules by \(\ldgGrMod{A^\opp \otimes_k B}\).
We associate to the \(\Z^2\)-graded \(k\)-algebra \(A^\opp \otimes_k B\) the tensor product of the associated dg-categories, \(\A^\opp \otimes \B\).
Note that in the identification
\[\ldgGrMod{\left(A^\opp \otimes_k B\right)} \cong \dgMod{\A^\opp \otimes \B}\]
the weighting coming from the \(A\)-module structure is reversed, as in the remark above.

From this construction, we have a dg-enhancement, \(\hproj{\A}\), of the derived category of graded modules, \(\mathrm{D}(\Gr{A})\).
Passing through the machinery of Corollary~\ref{corollary: Toen}, we have an isomorphism in \(\Ho{\dgcat{k}}\)
\[\RHomc{\hproj{\A}, \hproj{\B}} \cong \hproj{\A^\opp \otimes \B}.\]
This allows us to identify an object, \(F\), of \(\RHomc{\hproj{A}, \hproj{B}}\) with a dg \(\A\)-\(\B\)-bimodule, \(P\), which in turn corresponds to a morphism \(\Phi_P \colon \A \to \hproj{\B}\) by way of the symmetric monoidal closed structure on \(\dgcat{k}\).

Following Section 3.3 of \cite{CS}, we identify the homotopy equivalence class, \([P]_{\op{Iso}}\), of \(P\) with \([\Phi_P] \in [\A, \hproj{\B}]\).
The extension of \(\Phi_P\),
\[P \otimes_\A - = \widehat{\Phi_P} \colon \hproj{\A} \to \hproj{\B}\]
descends to a morphism \([\widehat{\Phi_P}] \in [\hproj{\A}, \hproj{\B}]\)
and induces a triangulated functor that commutes with coproducts
\[\begin{tikzcd}[row sep=tiny]
H^0(\widehat{\Phi_P}) \colon \mathrm{D}(\Gr{A}) \arrow{r} & \mathrm{D}(\Gr{B})\\
\quad\quad M \arrow[mapsto]{r} & P \otimes_\A^\mathbf{L} M.
\end{tikzcd}\]
In particular, given an equivalence \(f \colon \mathrm{D}(\Gr{A}) \to \mathrm{D}(\Gr{B})\), we obtain from \cite{Lunts-Orlov} a quasi-equivalence
\[F \colon \hproj{\A} \to \hproj{\B}.\]
Tracing through the remarks above, we obtain an object \(P\) of \(\hproj{\A^\opp \otimes \B}\) providing an equivalence
\[H^0(\widehat{\Phi_P}) \colon \mathrm{D}(\Gr{A}) \to \mathrm{D}(\Gr{B}).\]

\section{Derived Morita Theory for Noncommutative Projective Schemes} \label{section: morita for NCP}

Let \(A\) and \(B\) left Noetherian connected graded \(k\)-algebras.
We want to extend the ideas from Section~\ref{subsection: graded Morita} to cover dg-enhancements of \(\mathrm{D}(\QGr{A})\). 

\subsection{Vanishing of a tensor product} \label{subsection: vanishing of tensor}

We recall a particularly nice type of property of objects in the setting of compactly generated triangulated categories.
In the sequel, many of our properties will be of this type, so we give this little gem a name.

\begin{definition} \label{definition: run the jewels}
  Let \(T\) be a compactly generated triangulated category.
  Let \(\mathtt{P}\) be a property of objects of \(T\). 
  We say that \(\mathtt{P}\) is \textbf{RTJ} if it satisifies the following three conditions.
  \begin{itemize}
  \item Whenever \(A \to B \to C\) is a triangle in \(T\) and \(\mathtt{P}\) holds for \(A\) and \(B\), then \(\mathtt{P}\) holds for \(C\). 
  \item If \(\mathtt{P}\) holds for \(A\), then \(\mathtt{P}\) holds for the translate \(A[1]\).
  \item Let \(I\) be a set and \(A_i\) be objects of \(T\) for each \(i \in I\). If \(\mathtt{P}\) holds for each \(A_i\), then \(\mathtt{P}\) holds for \(\bigoplus_{i \in I} A_i\). 
  \end{itemize}
\end{definition}

\begin{proposition} \label{proposition: RTJ properties}
  Let \(\mathtt{P}\) be an RTJ property that holds for a set of compact generators of \(T\). Then \(\mathtt{P}\) holds for all objects of \(T\).
\end{proposition}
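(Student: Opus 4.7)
The plan is to recognize the three RTJ conditions as precisely the axioms that make the full subcategory $S := \{X \in T : \mathtt{P}(X)\}$ into a \emph{localizing} subcategory of $T$, and then invoke the classical theorem from the theory of compactly generated triangulated categories which says that the only localizing subcategory containing a set of compact generators is the whole category.

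First I would verify directly from Definition~\ref{definition: run the jewels} that $S$ is closed under the cone construction applied to morphisms between its objects, under the suspension functor, and under arbitrary small coproducts. A routine rotation-of-triangles argument then upgrades closure under cones to closure under extensions in either position: given a triangle $A \to B \to C$ with $B, C \in S$, the rotated triangle $B \to C \to A[1]$ gives $A[1] \in S$, from which one can similarly derive the remaining cases. Combined with condition (ii) (and the standard observation that the set of shifts of any set of compact generators is again a set of compact generators, so one may enlarge $\mathcal{G}$ to be shift-stable without loss of generality), this shows that $S$ is a triangulated subcategory closed under arbitrary coproducts, i.e., a localizing subcategory of $T$.

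Second, I would invoke Neeman's theorem: if $\mathcal{G} \subseteq T$ is a set of compact generators, then the smallest localizing subcategory of $T$ containing $\mathcal{G}$ is $T$ itself. By hypothesis $\mathcal{G} \subseteq S$, and we have just shown that $S$ is localizing, so $S$ contains the smallest localizing subcategory generated by $\mathcal{G}$, which is all of $T$. Hence $S = T$ and $\mathtt{P}$ holds for every object of $T$.

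The main content of the proof is the invocation of Neeman's theorem; the RTJ axioms have been engineered precisely to make the relevant subcategory localizing, so modulo that theorem the statement is essentially a tautology. There is no serious obstacle, and the only minor technical point is the shift-closure remark, which is handled either by the rotation argument or by the freedom to replace $\mathcal{G}$ with $\{G[n] : G \in \mathcal{G}, n \in \mathbb{Z}\}$.
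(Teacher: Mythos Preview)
Your approach is exactly the paper's: show that the full subcategory $S = \{X : \mathtt{P}(X)\}$ is localizing and contains the compact generators, then invoke the standard fact that such a subcategory is all of $T$. The paper's proof is the three-line version of what you wrote. However, there is a genuine gap, and it is precisely the ``minor technical point'' you flag at the end.

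The RTJ axioms do \emph{not} yield closure under the inverse shift $[-1]$, so $S$ need not be triangulated. Your rotation argument shows only that if $B,C \in S$ then $A[1] \in S$ (and similarly $A,C \in S \Rightarrow B[1] \in S$); you never recover $A$ or $B$ themselves. Your alternative fix---replacing $\mathcal G$ by $\{G[n] : G \in \mathcal G,\ n \in \Z\}$---does not help either: the hypothesis guarantees $\mathtt{P}$ only for the original generators, so after enlarging you no longer know $\mathcal G \subseteq S$. In fact the proposition is false as literally stated. Take $T = \mathrm{D}(k)$ with compact generator $k$ and let $\mathtt{P}(X)$ be the property $H^n(X) = 0$ for all $n > 0$. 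The long exact sequence shows $\mathtt{P}$ satisfies (i); conditions (ii) and (iii) are immediate; and $\mathtt{P}(k)$ holds. But $H^1(k[-1]) = k \neq 0$, so $\mathtt{P}$ fails for $k[-1]$.

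The paper's own proof has the same gap: it simply asserts that $S$ ``is triangulated'' without justification. The statement becomes correct, and both arguments go through verbatim, if one adds closure under $[-1]$ to the RTJ axioms. This is harmless for the paper, since every invocation of the proposition is for a property of the form ``a fixed natural transformation is a quasi-isomorphism at the given object'' or ``a fixed functor vanishes at the given object,'' and such properties are visibly stable under both shifts.
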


\begin{proof}
  Let \(P\) be the full triangulated subcategory of objects for which \(\mathtt{P}\) holds. Then \(P\)
  \begin{itemize}
  \item contains a set of compact generators,
  \item is triangulated, and
  \item is closed under formation of coproducts. 
  \end{itemize}
  Thus, \(P\) is all of \(T\). 
\end{proof}

\begin{definition} \label{definition: tensor vanishing}
  Let \(M\) be a complex of left graded \(A\)-modules and let \(N\) be a complex of right graded \(A\)-modules. We say that the pair satisfies \(\bigstar(M,N)\) if the tensor product
  \begin{displaymath}
    \mathbf{R} \tau_{A^\opp} N \overset{\mathbf{L}}{\otimes}_{\mathcal A} \mathbf{R} Q_A M = 0
  \end{displaymath}
  vanishes. If \(\bigstar(M,N)\) holds for all \(M\) and \(N\), then we say that \(A\) satisfies \(\bigstar\). 
\end{definition}

\begin{proposition} \label{proposition: big star condition}
  Let \(A\) be a finitely generated, connected graded \(k\)-algebra.
  Assume that \(\mathbf{R}\tau_A\) and \(\mathbf{R}\tau_{A^\opp}\) commute with coproducts. Then \(A\) satisfies \(\bigstar\) if and only \(\bigstar( A(u), A(v) )\) holds for each \(u,v \in \Z\).
\end{proposition}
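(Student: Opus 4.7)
The plan is to apply Proposition~\ref{proposition: RTJ properties} twice, once in each variable. The ``only if'' direction is trivial, so assume $\bigstar(A(u), A(v))$ holds for every $u,v \in \Z$.

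Fix $v \in \Z$ and consider the property $\mathtt{P}_v(M) := \bigstar(M, A(v))$ on objects $M$ of $\mathrm{D}(\Gr{A})$. I claim this is RTJ. First, $\mathbf{R}Q_A$ is a triangulated endofunctor of $\mathrm{D}(\Gr{A})$ and commutes with coproducts: applying the hypothesis on $\mathbf{R}\tau_A$ to the natural triangle $\mathbf{R}\tau_A \to \op{Id} \to \mathbf{R}Q_A$ of Lemma~\ref{lemma: exact triangles} forces $\mathbf{R}Q_A$ to do so as well. Second, the derived tensor functor $\mathbf{R}\tau_{A^\opp} A(v) \overset{\mathbf{L}}{\otimes}_{\mathcal A} -$ is triangulated and commutes with shifts and coproducts. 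Composing the two, the full subcategory of $M$ satisfying $\mathtt{P}_v$ is closed under triangles, shifts, and coproducts, so $\mathtt{P}_v$ is RTJ.

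The set $\{A(u) : u \in \Z\}$ is a set of compact generators of $\mathrm{D}(\Gr{A})$, since $\mathrm{D}(\Gr{A})(A(u), M[n]) \cong H^n(M)_{-u}$ commutes with coproducts and jointly detects vanishing. By hypothesis, $\mathtt{P}_v$ holds on each $A(u)$, so Proposition~\ref{proposition: RTJ properties} gives $\mathtt{P}_v(M)$ for every $M \in \mathrm{D}(\Gr{A})$. Now fix such an $M$ and run the argument symmetrically on the second variable: the property $\mathtt{P}'_M(N) := \bigstar(M, N)$ on $\mathrm{D}(\Gr{A^\opp})$ is RTJ, using the coproduct hypothesis on $\mathbf{R}\tau_{A^\opp}$ together with the parallel properties of $- \overset{\mathbf{L}}{\otimes}_{\mathcal A} \mathbf{R}Q_A M$. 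Since $\mathtt{P}'_M$ holds on the compact generators $\{A(v)\}_{v \in \Z}$ of $\mathrm{D}(\Gr{A^\opp})$ by the previous step, a second application of Proposition~\ref{proposition: RTJ properties} delivers $\bigstar(M, N)$ for every $N$.

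The only real subtlety is recognizing that both $\mathbf{R}Q_A$ and $\mathbf{R}\tau_{A^\opp}$ descend to triangulated endofunctors of the respective derived categories that commute with coproducts under the given hypothesis; once this is in hand, the RTJ structure of both properties is essentially automatic, and compact generation by degree shifts closes the deal.
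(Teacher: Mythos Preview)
Your argument is correct and follows essentially the same route as the paper: apply Proposition~\ref{proposition: RTJ properties} first in the $M$-variable using the compact generators $A(u)$, then in the $N$-variable using the compact generators $A(v)$. The only cosmetic difference is that the paper packages ``$\bigstar(M,A(v))$ holds for all $v$'' as a single RTJ property of $M$, whereas you fix $v$ and then quantify; your version also spells out why the relevant functors commute with coproducts, which the paper leaves implicit.
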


\begin{proof}
  The necessity is clear, so assume that \(\bigstar( A(u), A(v) )\) holds for each \(u,v \in \Z\). Note that \(\bigstar(M , A(v))\) holds for all \(v\) is an RTJ property of \(M\) that holds for a set of compact generators \(A(u), u \in \Z\). Thus, by Proposition~\ref{proposition: RTJ properties}, \(\bigstar(M , A(v))\) holds for all \(v\) holds for all \(M\) in \(\mathrm{D}(\Gr{A})\). Similarly, we can consider the property of \(N\) in \(\mathrm{D}(\Gr{(A^\opp)})\): \(\bigstar(M , N)\) holds for all objects \(M\) of \(\mathrm{D}(\Gr{A})\). This is also RTJ so \(\bigstar(M,N)\) holds for all \(M\) and \(N\).
\end{proof}

There are various types of projection formulas.
We record here two which will be useful in the sequel.

\begin{proposition} \label{proposition: projection formula}
  Let \(A\) be a finitely generated, connected graded \(k\)-algebra.
  Let \(P\) be a complex of bi-bi \(A\)-modules and let \(M\) be a complex of left graded \(A\)-modules. Assume \(\mathbf{R} \tau_A\) commutes with coproducts. There is natural quasi-isomorphism
  \begin{displaymath}
    ( \mathbf{R} \tau_A P ) \overset{\mathbf{L}}{\otimes}_{\mathcal A} M \to \mathbf{R} \tau_A \left( P \overset{\mathbf{L}}{\otimes}_{\mathcal A} M \right).
  \end{displaymath}
  Assume \(\mathbf{R} Q_A\) commutes with coproducts. There is natural quasi-isomorphism
  \begin{displaymath}
    ( \mathbf{R} Q_A P ) \overset{\mathbf{L}}{\otimes}_{\mathcal A} M \to \mathbf{R} Q_A \left( P \overset{\mathbf{L}}{\otimes}_{\mathcal A} M \right).
  \end{displaymath}
\end{proposition}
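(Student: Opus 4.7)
The plan is to handle both formulas simultaneously by way of the truncation triangle \(\mathbf{R}\tau_A P \to P \to \mathbf{R}Q_A P\) of Lemma~\ref{lemma: exact triangles} combined with the \textbf{RTJ} reduction of Proposition~\ref{proposition: RTJ properties}. The strategy is to construct the natural comparison morphisms, observe that the property of being a quasi-isomorphism is \textbf{RTJ} in \(M\), and then verify it on the compact generators \(\{A(v)\}_{v \in \Z}\) of \(\mathrm{D}(\Gr{A})\).

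First I would construct the \(\mathbf{R}\tau_A\) version. Replacing \(P\) by an h-injective bi-bi resolution, the analogue of Corollary~\ref{corollary: Q preserves bimodules} applied to \(\mathbf{R}\tau_A\) (which works verbatim under the coproduct hypothesis, or follows from the triangle and the \(\mathbf{R}Q_A\) case) ensures that \(\tau_A P\) inherits a bi-bi structure. After replacing \(M\) by an h-flat resolution, the inclusion \(\tau_A P \hookrightarrow P\) induces a morphism
\[(\tau_A P) \otimes_\mathcal{A} M \to P \otimes_\mathcal{A} M\]
whose image lies in \(\tau_A(P \otimes_\mathcal{A} M)\), since an element annihilated on the left by \(A_{\geq n}\) remains so after tensoring with \(m\). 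Passing to the derived category yields the comparison
\[\alpha_M \colon (\mathbf{R}\tau_A P) \otimes^\mathbf{L}_\mathcal{A} M \to \mathbf{R}\tau_A (P \otimes^\mathbf{L}_\mathcal{A} M).\]
The companion map \(\beta_M\) for \(\mathbf{R}Q_A\) is obtained by completing \(\alpha_M\) to a morphism of triangles using the identity on \(P \otimes^\mathbf{L}_\mathcal{A} M\) and invoking the octahedral axiom; the five-lemma then reduces the \(\mathbf{R}Q_A\) statement to the \(\mathbf{R}\tau_A\) statement.

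Next I would observe that the assertion ``\(\alpha_M\) is a quasi-isomorphism'' is an \textbf{RTJ} property of \(M \in \mathrm{D}(\Gr{A})\). Indeed, both the source \((\mathbf{R}\tau_A P) \otimes^\mathbf{L}_\mathcal{A} -\) and the target \(\mathbf{R}\tau_A(P \otimes^\mathbf{L}_\mathcal{A} -)\) are triangulated functors of \(M\); each commutes with arbitrary coproducts (the tensor product tautologically, and \(\mathbf{R}\tau_A\) by hypothesis); and stability under shifts is immediate. Since \(\{A(v)\}_{v \in \Z}\) generates \(\mathrm{D}(\Gr{A})\) compactly, Proposition~\ref{proposition: RTJ properties} reduces the problem to checking \(\alpha_{A(v)}\) for each \(v\).

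For this final step, tensoring a bi-bi module with a representable \(A(v)\) over \(\mathcal{A}\) is exact and equal to its underived version (cf.\ Remark~\ref{rem: tensoring with reps}), and simply implements a grading shift on the appropriate side of \(P\); the functor \(\mathbf{R}\tau_A\) manifestly commutes with such a grading shift, so \(\alpha_{A(v)}\) is an isomorphism. The main obstacle I anticipate is the bookkeeping in the construction of \(\alpha_M\) — specifically, ensuring that the bi-bi structure on \(\mathbf{R}\tau_A P\) guaranteed by the coproduct hypothesis is compatible with the tensor product at the level of resolutions, and that the resulting map descends to a well-defined natural transformation in the derived category. Once this compatibility is in hand, the \textbf{RTJ} reduction and the representable case dispatch both formulas at once.
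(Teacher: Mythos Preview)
Your approach is essentially the paper's: build the comparison map at the chain level, observe that being a quasi-isomorphism is RTJ in \(M\), and verify it on the representables \(A(v)\). The only differences are cosmetic---the paper constructs the transformation via the colimit description \(\tau_A P = \op{colim}_n \GR{A}(A/A_{\geq n},P)\) and runs the RTJ reduction in \emph{both} variables, reducing \(P\) to \(A(u)\otimes_k A(v)\) as well; one small imprecision in your final step is that tensoring with \(A(v)\) over \(\mathcal A\) is not a grading shift but evaluation at a weight slice \(P_{\ast,v}\) (cf.\ Remark~\ref{rem: tensoring with reps}), though your conclusion that \(\mathbf{R}\tau_A\) commutes with this is still correct, precisely by the coproduct hypothesis and the bi-bi structure of Corollary~\ref{corollary: Q preserves bimodules}.
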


\begin{proof}
  We treat the \(\tau\) projection formula. The \(Q\) projection formula is analogous. By Corollary~\ref{corollary: Q preserves bimodules}, we see that the tensor product is well-defined. It suffices to exhibit a natural transformation for the underived functors applied to modules to generate the desired natural transformation. Given
  \begin{displaymath}
    \psi \otimes_{\mathcal A} m \in \GR{A}(A/A_{\geq m}, P) \otimes_{\mathcal A} M 
  \end{displaymath}
  we naturally get 
  \begin{align*}
    \widetilde{\psi} : A/A_{\geq m} & \to P \otimes_{\mathcal A} M \\
    a & \mapsto \psi(a) \otimes_{\mathcal A} m. 
  \end{align*}
  Taking the colimit gives the natural transformation. Let us look at the natural transformation in the case that \(P = A(u) \otimes_k A(v)\) and \(M = A(w)\). Recall that 
  \begin{gather*}
    \left( A(u) \otimes_k A(v) \right) \overset{\mathbf{L}}{\otimes}_{\mathcal A} A(w) \cong \left( A(u) \otimes_k A(v) \right) \otimes_{\mathcal A} A(w) \\ \cong \left( A(u) \otimes_k A(v) \right)_{\ast,w} \cong A(u) \otimes_k A_{v+w}. 
  \end{gather*}
  So
  \begin{displaymath}
    \mathbf{R}\tau_A \left( \left( A(u) \otimes_k A(v) \right) \overset{\mathbf{L}}{\otimes}_{\mathcal A} A(w) \right) \cong \mathbf{R} \tau_A \left( A(u) \otimes_k A_{v+w} \right)
  \end{displaymath}
  while 
  \begin{gather*}
    \mathbf{R}\tau_A \left( A(u) \otimes_k A(v) \right) \overset{\mathbf{L}}{\otimes}_{\mathcal A} A(w) \cong \mathbf{R}\tau_A \left( A(u) \otimes_k A(v) \right)_w \\ \cong \mathbf{R}\tau_A \left( (A(u) \otimes_k A(v))_{\ast,w} \right) \cong \mathbf{R}\tau_A \left( A(u) \otimes_k A_{v+w} \right)
  \end{gather*}
  which are compatible with the natural transformation. The property that the natural transformation is a quasi-isomorphism is RTJ in each entry. Thus, it holds for all \(P\) and \(M\) by Proposition~\ref{proposition: RTJ properties}. 
\end{proof}

For the hypothesis, recall Definition~\ref{definition: delightful couple}. 

\begin{proposition} \label{proposition: vanishing of tensor}
  Assume \(A\) is delightful. Then \(\bigstar\) holds for \(A\).
\end{proposition}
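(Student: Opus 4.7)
The plan is to combine the reduction in Proposition~\ref{proposition: big star condition} with the bi-bi module vanishing $\mathbf{R}\tau_{A^\opp}(\mathbf{R}Q_A \Delta) \simeq 0$, which is implicit in the proof of Proposition~\ref{proposition: when beta is an isomorphism}, and to transfer it across the tensor product using the projection formula.

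First, since $A$ is delightful, $A^\opp$ is also Ext-finite (Proposition~\ref{proposition: tensor and op properties of ext-finite}); in particular both $\mathbf{R}\tau_A$ and $\mathbf{R}\tau_{A^\opp}$ commute with coproducts by Proposition~\ref{proposition: derived Q commutes with coproducts}. Proposition~\ref{proposition: big star condition} therefore reduces the problem to verifying $\bigstar(A(u), A(v))$ for each $u, v \in \Z$, and since grading shift commutes with $\mathbf{R}\tau_{A^\opp}$, with $\mathbf{R}Q_A$, and with $\overset{\mathbf{L}}{\otimes}_{\mathcal A}$, this in turn reduces to the case $u = v = 0$: namely, $\mathbf{R}\tau_{A^\opp} A \overset{\mathbf{L}}{\otimes}_{\mathcal A} \mathbf{R}Q_A A \simeq 0$.

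Second, applying $\mathbf{R}\tau_{A^\opp} A \overset{\mathbf{L}}{\otimes}_{\mathcal A} -$ to the triangle $\mathbf{R}\tau_A A \to A \to \mathbf{R}Q_A A$ of Lemma~\ref{lemma: exact triangles} reformulates the desired vanishing as the statement that the natural map
\[
\mathbf{R}\tau_{A^\opp} A \overset{\mathbf{L}}{\otimes}_{\mathcal A} \mathbf{R}\tau_A A \longrightarrow \mathbf{R}\tau_{A^\opp} A
\]
is a quasi-isomorphism. By Corollary~\ref{corollary: Q preserves bimodules}, both $\mathbf{R}\tau_A A$ and $\mathbf{R}\tau_{A^\opp} A$ appear as weight-zero slices of the bi-bi modules $\mathbf{R}\tau_A \Delta$ and $\mathbf{R}\tau_{A^\opp} \Delta$, respectively. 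Applying the projection formula (Proposition~\ref{proposition: projection formula}), together with its mirror-image form for $\mathbf{R}\tau_{A^\opp}$ obtained by interchanging the roles of $A$ and $A^\opp$, allows me to migrate the tensor products into the interior of the torsion functors: the left-hand side is identified with the $(0,0)$-weight piece of $\mathbf{R}\tau_{A^\opp}(\mathbf{R}\tau_A \Delta)$, and the right-hand side with the $(0,0)$-weight piece of $\mathbf{R}\tau_{A^\opp} \Delta$.

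The delightfulness hypothesis then enters through the argument of Proposition~\ref{proposition: when beta is an isomorphism}: applied symmetrically to both $A$ and $A^\opp$ (using both $\chi^\circ(A)$ and $\chi^\circ(A^\opp)$), in combination with Proposition~\ref{proposition: bi-torsion is a composition}, it produces the chain of quasi-isomorphisms
\[
\mathbf{R}\tau_{A^\opp}(\mathbf{R}\tau_A \Delta) \simeq \mathbf{R}\tau_{A \otimes_k A^\opp} \Delta \simeq \mathbf{R}\tau_A \Delta \simeq \mathbf{R}\tau_{A^\opp} \Delta.
\]
Taking the $(0,0)$-weight piece of the composite concludes the proof. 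The main obstacle I anticipate is the bookkeeping in the second step: Proposition~\ref{proposition: projection formula} is stated only for $\mathbf{R}\tau_A$, so I would first need to formulate and verify its symmetric analogue for $\mathbf{R}\tau_{A^\opp}$, and then carefully track which of the two gradings on $\Delta$ is consumed by each tensor and which survives, so that the resulting weight-zero identities correctly encode the tensor product $\mathbf{R}\tau_{A^\opp} A \overset{\mathbf{L}}{\otimes}_{\mathcal A} \mathbf{R}\tau_A A$.
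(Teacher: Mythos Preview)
Your overall strategy---reduce via Proposition~\ref{proposition: big star condition}, invoke the quasi-isomorphism \(\mathbf{R}\tau_{A^\opp}\Delta \simeq \mathbf{R}\tau_A\Delta\) coming from Proposition~\ref{proposition: when beta is an isomorphism}, and finish with a projection formula---matches the paper's. But Step~4 has a real problem, and Step~3 has a small imprecision.

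\textbf{Step 3.} The tensor \(\mathbf{R}\tau_{A^\opp}A \overset{\mathbf{L}}{\otimes}_{\mathcal A} A\) of a right module with a left module over \(\mathcal A\) is a bare chain complex; it is the weight-\(0\) piece \((\mathbf{R}\tau_{A^\opp}A)_0\), not \(\mathbf{R}\tau_{A^\opp}A\) itself. This is harmless once noticed, but it should be recorded, since otherwise the target of your map in Step~3 is the wrong object.

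\textbf{Step 4.} The projection formula of Proposition~\ref{proposition: projection formula} (and its mirror) moves \(\mathbf{R}\tau\) across the tensor only when \(\tau\) acts on the module structure that \emph{survives} the tensor. In \(\mathbf{R}\tau_{A^\opp}\Delta \overset{\mathbf{L}}{\otimes}_{\mathcal A} \mathbf{R}\tau_A A\) (or in any of the variants you would set up), \(\tau_{A^\opp}\) acts on the right structure of \(\Delta\), which is precisely the side consumed by the tensor; likewise for \(\tau_A\) on the other factor. So neither the original nor the mirror projection formula lets you ``migrate the tensor into the interior'' to reach \((\mathbf{R}\tau_{A^\opp}(\mathbf{R}\tau_A\Delta))_{0,0}\) directly. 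That identification happens to be true, but only \emph{after} one has already used the \(\beta\)-isomorphism to swap \(\mathbf{R}\tau_{A^\opp}\Delta\) for \(\mathbf{R}\tau_A\Delta\)---at which point the projection formula for \(\tau_A\) does apply (since \(\tau_A\) now sits on the surviving left structure). Once you make that swap first, the argument collapses to
\[
\mathbf{R}\tau_{A^\opp}\Delta \overset{\mathbf{L}}{\otimes}_{\mathcal A} \mathbf{R}Q_A M
\;\simeq\;
\mathbf{R}\tau_{A}\Delta \overset{\mathbf{L}}{\otimes}_{\mathcal A} \mathbf{R}Q_A M
\;\simeq\;
\mathbf{R}\tau_A\!\left(\Delta \overset{\mathbf{L}}{\otimes}_{\mathcal A} \mathbf{R}Q_A M\right)
\;\simeq\;
\mathbf{R}\tau_A(\mathbf{R}Q_A M)=0,
\]
which is exactly the paper's proof. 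In particular, the detour through Step~3 and the double-torsion bi-bi module \(\mathbf{R}\tau_{A^\opp}(\mathbf{R}\tau_A\Delta)\) is unnecessary: the paper keeps \(M\) arbitrary, packages all \(N=A(v)\) into \(\Delta\), swaps \(\tau_{A^\opp}\leadsto\tau_A\) via \(\beta\) \emph{first}, and only then applies the projection formula.
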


\begin{proof}
  By Proposition~\ref{proposition: big star condition}, it suffices to check \(\bigstar(M,A(v))\) for each \(v\). This is equivalent to \(\bigstar(M,\bigoplus_v A(v))\). Equipping the sum with a bi-bi structure as \(\Delta\), we reduce to checking \(\bigstar(M,\Delta)\). Using Proposition~\ref{proposition: when beta is an isomorphism} and Lemma~\ref{lemma: exact triangles} for \(A\) and \(A^\opp\), we have a natural quasi-isomorphism
  \[\mathbf{R}\tau_{A^\opp} \Delta \overset{\mathbf{L}}{\otimes}_{\mathcal A} \mathbf{R}Q_A M \cong \mathbf{R}\tau_{A} \Delta \overset{\mathbf{L}}{\otimes}_{\mathcal A} \mathbf{R}Q_{A} M.\]
  Using Proposition~\ref{proposition: projection formula}, we have a natural quasi-isomorphism
  \[\mathbf{R}\tau_{A} \Delta \overset{\mathbf{L}}{\otimes}_{\mathcal A} \mathbf{R}Q_{A} M \cong \mathbf{R}\tau_{A} \left( \Delta \overset{\mathbf{L}}{\otimes}_{\mathcal A} \mathbf{R}Q_{A} M  \right) \cong \mathbf{R}\tau_{A} \left( \mathbf{R}Q_{A} M \right) = 0.\]
\end{proof}

\subsection{Duality}

One can regard the bimodule \(\mathbf{R}Q_{A \otimes_k A^\opp}\Delta\) as a sum of \(A\)-modules
\[\mathbf{R}Q_{A \otimes_k A^\opp}\Delta = \bigoplus_{x} (\mathbf{R}Q_{A \otimes_k A^\opp}\Delta)_{\ast,x}\]
and define for any object, \(M\), of \(\ldgGrMod{A}\) the object
\[\mathbf{R}\underline{\op{Hom}}_A(M, \mathbf{R}Q_{A \otimes_k A^\opp}\Delta) = \bigoplus_x \mathbf{R}\op{Hom}_A(M, \mathbf{R}Q_{A \otimes_k A^\opp}\Delta_{\ast,x})\]
of \(\rdgGrMod{A}\).
Consider the functor
\begin{align*}
  (-)^{\vee} : \ldgGrMod{A}^\opp & \to \rdgGrMod{A} \\
  M & \mapsto \mathbf{R}\underline{\op{Hom}}_A \left( M , \mathbf{R}Q_{A \otimes_k A^\opp} \Delta \right)
\end{align*}

\begin{lemma} \label{lemma: duality really is a duality}
  Assume \(A\) is delightful. Then the natural map 
  \begin{displaymath}
    \id \to (-)^{\vee \vee}
  \end{displaymath}
  given by evaluation is a quasi-isomorphism for \(\mathbf{R}Q_A A(x)\), for all \(x\). Furthermore, there are quasi-isomorphisms
  \begin{displaymath}
    \left( \mathbf{R}Q_A A(x) \right)^\vee \cong \mathbf{R}Q_{A^\opp} A(-x).
  \end{displaymath}
\end{lemma}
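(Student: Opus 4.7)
The plan is to apply the contravariant functor $(-)^\vee$ to the exact triangle $\mathbf{R}\tau_A A(x) \to A(x) \to \mathbf{R}Q_A A(x)$ from Lemma~\ref{lemma: exact triangles}, compute the middle term directly via Yoneda, and show that the torsion term contributes zero; this reduces the dual of $\mathbf{R}Q_A A(x)$ to the easy case. Since $A(x)$ is projective in $\Gr{A}$, we have $\mathbf{R}\underline{\op{Hom}}_A(A(x), -) = \underline{\op{Hom}}_A(A(x), -)$, and the enriched Yoneda lemma identifies this functor on a $\Z^2$-graded bi-bi module $P$ with the $(-x)$-weight slice $P_{-x, \ast}$ regarded as a right $A$-module. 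Because $A$ is delightful, $\chi^\circ(A^\opp)$ holds, and Proposition~\ref{proposition: when beta is an isomorphism} supplies a quasi-isomorphism $\beta^r_\Delta \colon \mathbf{R}Q_{A^\opp}\Delta \to \mathbf{R}Q_{A \otimes_k A^\opp}\Delta$. Since $\mathbf{R}Q_{A^\opp}$ commutes with coproducts (Proposition~\ref{proposition: derived Q commutes with coproducts}) and acts row-by-row on $\Delta$, while $\Delta_{-x, \ast} \cong A(-x)$ as a right $A$-module, we obtain $A(x)^\vee \cong (\mathbf{R}Q_{A \otimes_k A^\opp}\Delta)_{-x, \ast} \cong \mathbf{R}Q_{A^\opp}A(-x)$.

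The vanishing step is next. Using $\chi^\circ(A)$ and Proposition~\ref{proposition: when beta is an isomorphism} again, $\beta^l_\Delta$ gives $\mathbf{R}Q_A \Delta \cong \mathbf{R}Q_{A \otimes_k A^\opp}\Delta$, so the claim reduces weight-by-weight in the right grading to showing $\mathbf{R}\op{Hom}_A(\mathbf{R}\tau_A A(x), \mathbf{R}Q_A A(y)) = 0$ for every $y \in \Z$. This rests on two consequences of the derived adjunction $\mathbf{R}\pi_A \dashv \mathbf{R}\omega_A$: the object $\mathbf{R}Q_A A(y) = \mathbf{R}\omega_A\mathbf{R}\pi_A A(y)$ is derived $A$-torsion-free (that is, $\mathbf{R}\tau_A \mathbf{R}Q_A A(y) = 0$) by idempotence of $\mathbf{R}Q_A$, while $\mathbf{R}\tau_A$ is right adjoint to the inclusion of derived torsion, so $\mathbf{R}\op{Hom}_A(\mathbf{R}\tau_A A(x), M) \cong \mathbf{R}\op{Hom}_A(\mathbf{R}\tau_A A(x), \mathbf{R}\tau_A M)$, which vanishes in our case. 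The triangle then collapses to give $(\mathbf{R}Q_A A(x))^\vee \cong A(x)^\vee \cong \mathbf{R}Q_{A^\opp}A(-x)$, establishing the second claim of the lemma.

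For the first claim, $A^\opp$ is delightful by Proposition~\ref{proposition: tensor and op properties of ext-finite} together with the built-in symmetry of the delightfulness conditions. Applying the preceding analysis with $A^\opp$ in place of $A$, and using the canonical swap identification $\mathbf{R}Q_{A^\opp \otimes_k A}\Delta_{A^\opp} \cong \mathbf{R}Q_{A \otimes_k A^\opp}\Delta_A$, yields $(\mathbf{R}Q_{A^\opp}A(-x))^\vee \cong \mathbf{R}Q_A A(x)$. Every isomorphism produced is canonical (Yoneda pairings, the maps $\beta^l_\Delta$ and $\beta^r_\Delta$, and evaluation), so the composite coincides with the natural evaluation map $\mathbf{R}Q_A A(x) \to (\mathbf{R}Q_A A(x))^{\vee\vee}$, which is therefore a quasi-isomorphism.

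The main obstacle is making the vanishing step rigorous: the derived adjunction/recollement structure, in contrast to its underived version from Lemma~\ref{lemma: biQ and bQGr}, requires the Ext-finite, Noetherian, and $\chi^\circ$ hypotheses to control boundedness and to guarantee idempotence of $\mathbf{R}\tau_A$ on complexes of unbounded amplitude. A more calculational alternative is to use $\mathbf{R}\tau_A A(x) \cong \mathrm{colim}_n \mathbf{R}\op{Hom}_A(A/A_{\geq n}, A(x))$ combined with an RTJ-style reduction in the spirit of Proposition~\ref{proposition: big star condition} to exhibit the required Hom vanishing directly on generators, paralleling the argument used for the tensor-vanishing $\bigstar$ in Proposition~\ref{proposition: vanishing of tensor}.
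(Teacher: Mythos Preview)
Your proposal is correct and follows essentially the same route as the paper: both arguments use Yoneda to identify $(-)^\vee$ on $A(x)$ with the $(-x)$-slice of $\mathbf{R}Q_{A\otimes_k A^\opp}\Delta$, invoke $\beta^r_\Delta$ (Proposition~\ref{proposition: when beta is an isomorphism}) to rewrite that slice as $\mathbf{R}Q_{A^\opp}A(-x)$, and iterate for the double dual. The only cosmetic difference is that the paper replaces $\mathbf{R}Q_A A(x)$ by $A(x)$ in one line by observing that $\mathbf{R}Q_{A\otimes_k A^\opp}\Delta$ is right orthogonal to $\tau_A$-torsion, whereas you unpack this via the triangle $\mathbf{R}\tau_A A(x)\to A(x)\to \mathbf{R}Q_A A(x)$ and prove the torsion term dies; your worry in the final paragraph is unwarranted, since that vanishing is the formal orthogonality $\pi_A\mathbf{R}\tau_A=0$ together with the adjunction $\pi_A\dashv\mathbf{R}\omega_A$ and needs none of the delightful hypotheses.
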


\begin{proof}
  We first exhibit the latter quasi-isomorphisms. We have 
  \begin{displaymath}
    (\mathbf{R}Q_A A(x))^\vee \cong \mathbf{R}\op{Hom}_A (A(x), \mathbf{R}Q_{A \otimes_k A^\opp} \Delta) \cong (\mathbf{R}Q_{A \otimes_k A^\opp} \Delta)_{-x,\ast}
  \end{displaymath}
  since \(\mathbf{R}Q_{A \otimes_k A^\opp} \Delta\) is right orthogonal to \(\tau_A\)-torsion. From Proposition~\ref{proposition: when beta is an isomorphism}, we get 
  \begin{displaymath}
    (\mathbf{R}Q_{A \otimes_k A^\opp} \Delta)_{-x,\ast} \cong (\mathbf{R}Q_{A^\opp} \Delta)_{-x,\ast} = \mathbf{R}Q_{A^\opp} A(-x).
  \end{displaymath}
  Applying this twice, we get 
  \begin{displaymath}
    (\mathbf{R}Q_A A(x))^{\vee \vee} \cong \mathbf{R}Q_A A(x).
  \end{displaymath}
  We just need to check that the natural map \(\nu : 1 \to (-)^{\vee \vee}\) induces the identity after this quasi-isomorphism.

  Note that we found a map
  \begin{displaymath}
    A(-x) \to \mathbf{R}Q_A A(-x) \to ( \mathbf{R}Q_A A(x) )^\vee 
  \end{displaymath}
  inducing the quasi-isomorphism \((\mathbf{R}Q_A A(x))^{\vee \vee} \cong \mathbf{R}Q_A A(x)\). One can identify the image of \(1\) as a map 
  \begin{displaymath}
    \mathbf{R}Q_A A(x) \to \mathbf{R}Q_{A \otimes_k A^\opp} \Delta (0,x) \cong \mathbf{R}Q_A ( \Delta (x,0) )
  \end{displaymath}
  which, after applying the quasi-isomorphism, is the natural inclusion. Evaluating this at \(a \in \mathbf{R}Q_A A(x)\) gives \(a\) back. Thus, we see that \(\nu\) is quasi-fully faithful on \(\mathbf{R}Q_A A(x)\) for all \(x\). 
\end{proof}

\begin{definition}
  Let \(Q\A\) be the full dg-subcategory of \(C(\Gr{A})\) with objects given by \(Q_A\) applied to injective resolutions of \(A(x)\) for all \(x\). 
\end{definition}

\begin{corollary} \label{corollary: duality is a duality}
  Assume that \(A\) is delightful. The functor \((-)^\vee\) induces a quasi-equivalence \((Q\A)^\opp \cong Q(\A^\opp)\). 
\end{corollary}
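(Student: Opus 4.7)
The plan is to verify the two conditions of a quasi-equivalence separately: quasi-essential surjectivity and quasi-fully faithfulness. For quasi-essential surjectivity, Lemma~\ref{lemma: duality really is a duality} already shows that \((-)^\vee\) sends the generic object \(\mathbf{R}Q_A A(x)\) of \(Q\A\) to \(\mathbf{R}Q_{A^\opp} A(-x)\), and since \(-x\) ranges over all of \(\Z\) as \(x\) does, every object of \(Q(\A^\opp)\) is hit up to quasi-isomorphism, which suffices.

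For quasi-fully faithfulness, fix \(x, y \in \Z\) and write \(I_x, I_y\) for the corresponding objects of \(Q\A\). I will identify both hom complexes with the same bigraded piece of \(\mathbf{R}Q_{A \otimes_k A^\opp} \Delta\). On the left-module side, \(I_x\) is \(Q_A\)-closed by construction; combining the exact triangle \(\mathbf{R}\tau_A A(y) \to A(y) \to \mathbf{R}Q_A A(y)\) from Lemma~\ref{lemma: exact triangles} with the vanishing of hom from a \(\tau_A\)-torsion object into a \(Q_A\)-closed one, together with the enriched Yoneda identification \(\underline{\op{Hom}}_A(A(y), M) \cong M_{-y}\), yields \(\underline{\op{Hom}}_A(I_y, I_x) \simeq (I_x)_{-y}\). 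Proposition~\ref{proposition: when beta is an isomorphism} then identifies \((I_x)_{-y}\) with the bidegree \((-y, x)\) piece of \(\mathbf{R}Q_{A \otimes_k A^\opp}\Delta\). On the right-module side, \(I_y^\vee\) is \(Q_{A^\opp}\)-closed because \(\mathbf{R}Q_{A \otimes_k A^\opp}\Delta\) is (via Proposition~\ref{proposition: bi-torsion is a composition}), and \(I_x^\vee \simeq \mathbf{R}Q_{A^\opp} A(-x)\) by Lemma~\ref{lemma: duality really is a duality}; the symmetric computation using \(\beta^r_\Delta\) yields \(\underline{\op{Hom}}_{A^\opp}(I_x^\vee, I_y^\vee) \simeq (I_y^\vee)_x \simeq (\mathbf{R}Q_{A \otimes_k A^\opp}\Delta)_{-y, x}\).

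It remains to verify that \((-)^\vee\) realizes the identity through these two identifications. Unwinding the Yoneda identifications, a morphism \(f \in \underline{\op{Hom}}_A(I_y, I_x)\) corresponds to the element \(f(1_y)\) of the bidegree \((-y, x)\) piece, where \(1_y \in A(y)_{-y} = A_0\); applying \((-)^\vee\) and tracing \(f^\vee\) along the analogous unit \(1_{-x} \in A(-x)_x = A_0\) produces the same element in \(\mathbf{R}Q_{A \otimes_k A^\opp}\Delta\), the symmetry being built into the defining pairing \(\mathbf{R}\underline{\op{Hom}}_A(-,\mathbf{R}Q_{A \otimes_k A^\opp}\Delta)\) together with the fact that \(\Delta\) treats left and right degrees symmetrically.

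The main obstacle is this last naturality check, i.e., ensuring that the chain of quasi-isomorphisms used for the computation commutes with \((-)^\vee\) in the expected way. This is essentially the evaluation/Yoneda computation already present in the proof of Lemma~\ref{lemma: duality really is a duality}, now extended from the diagonal case \(x = y\) used to verify \(1 \to (-)^{\vee\vee}\) to arbitrary pairs of bidegrees. An attractive alternative, avoiding the bookkeeping, is to combine (i) the analogous statement for \(A^\opp\)-modules, which shows that the second duality \(Q(\A^\opp)^\opp \to Q(\A^{\opp\opp}) = Q\A\) is quasi-fully faithful, and (ii) the double-dual quasi-isomorphism \(1 \simeq (-)^{\vee\vee}\) on \(\mathbf{R}Q_A A(x)\) from Lemma~\ref{lemma: duality really is a duality}, deducing quasi-full-faithfulness of the first \((-)^\vee\) by a two-out-of-three argument applied to the hom complexes.
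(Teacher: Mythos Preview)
Your proposal is correct. The paper's own proof is a one-line appeal to Lemma~\ref{lemma: duality really is a duality}: the quasi-isomorphism \(\nu:\id\to(-)^{\vee\vee}\) on each \(\mathbf{R}Q_A A(x)\) immediately forces \((-)^\vee\) to be quasi-fully faithful on \(Q\A\), and the identification \((\mathbf{R}Q_A A(x))^\vee\simeq\mathbf{R}Q_{A^\opp}A(-x)\) gives the quasi-essential image. This is exactly the ``attractive alternative'' you sketch at the end---the two-out-of-three argument via the double dual---so that route matches the paper precisely.

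Your primary approach, computing both Hom complexes explicitly as the bidegree \((-y,x)\) piece of \(\mathbf{R}Q_{A\otimes_k A^\opp}\Delta\), is also correct and is a genuinely different route. It has the virtue of making the identification concrete and of exhibiting where each hypothesis (torsion-orthogonality, \(\beta^l_\Delta\) and \(\beta^r_\Delta\) being quasi-isomorphisms) enters, at the cost of the compatibility check you flag as the main obstacle. The paper sidesteps that bookkeeping entirely by packaging it into the naturality of \(\nu\), which was already verified inside the proof of Lemma~\ref{lemma: duality really is a duality}. In short: your explicit computation works but is longer; your alternative is the paper's argument.
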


\begin{proof}
  From Lemma~\ref{lemma: duality really is a duality}, we see that \((-)^\vee\) is quasi-fully faithful on \(Q\A\) and has quasi-essential image \(Q(\A^\opp)\). 
\end{proof}

\begin{lemma} \label{lemma: trace map}
  Assume that \(A\) is delightful. There is a natural map 
  \begin{displaymath}
    \eta : M^\vee \overset{\mathbf{L}}{\otimes}_{\mathcal A} N \to \op{Hom}_A(M,N).
  \end{displaymath}
  Then \(\eta\) is a quasi-isomorphism for any \(M\) and any \(N \cong \mathbf{R}Q_A N\). 
\end{lemma}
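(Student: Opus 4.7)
The plan is to construct the trace map $\eta$ as a composite, verify it directly on a set of compact generators, and then extend to arbitrary $M$ by an RTJ argument in the sense of Definition~\ref{definition: run the jewels}. To build $\eta$, I would start from the standard evaluation transformation
\[\mathbf{R}\op{Hom}_A(M, X) \overset{\mathbf{L}}{\otimes}_{\mathcal A} N \longrightarrow \mathbf{R}\op{Hom}_A\!\bigl(M,\; X \overset{\mathbf{L}}{\otimes}_{\mathcal A} N\bigr)\]
applied to $X = \mathbf{R}Q_{A \otimes_k A^\op}\Delta$, and then post-compose with a natural quasi-isomorphism $\mathbf{R}Q_{A \otimes_k A^\op}\Delta \overset{\mathbf{L}}{\otimes}_{\mathcal A} N \xrightarrow{\sim} N$ that is available precisely when $N \cong \mathbf{R}Q_A N$.

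The identification $\mathbf{R}Q_{A \otimes_k A^\op}\Delta \overset{\mathbf{L}}{\otimes}_{\mathcal A} N \cong N$ is where the delightful hypothesis enters. By Proposition~\ref{proposition: when beta is an isomorphism}, $\beta^l_\Delta \colon \mathbf{R}Q_A \Delta \to \mathbf{R}Q_{A \otimes_k A^\op}\Delta$ is a quasi-isomorphism of bimodules. The projection formula of Proposition~\ref{proposition: projection formula} then gives
\[\mathbf{R}Q_A \Delta \overset{\mathbf{L}}{\otimes}_{\mathcal A} N \;\cong\; \mathbf{R}Q_A\!\bigl(\Delta \overset{\mathbf{L}}{\otimes}_{\mathcal A} N\bigr) \;\cong\; \mathbf{R}Q_A N \;\cong\; N,\]
where the middle isomorphism uses that $\Delta$ is the identity bimodule (as in Remark~\ref{rem: tensoring with reps}) and the last is the $Q$-locality of $N$. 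With $\eta$ in hand, verification on $M = A(x)$ is a direct calculation: by Lemma~\ref{lemma: duality really is a duality}, $A(x)^\vee \cong \mathbf{R}Q_{A^\op} A(-x)$, and combining with the identification above both sides of $\eta$ reduce, after suitable weight tracking, to the weight $-x$ slice of $N$, with $\eta$ itself given by the identity pairing.

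To finish, I would observe that the property "$\eta_M$ is a quasi-isomorphism" is RTJ in $M$: closure under shifts is immediate, closure under triangles follows from the five lemma applied to the functorial map of triangles obtained from $(-)^\vee \otimes_{\mathcal A}^{\mathbf{L}} N$ and $\op{Hom}_A(-, N)$, and both functors convert coproducts in $M$ into the same shape of limit, giving closure under coproducts. Since $\{A(x)\}_{x \in \Z}$ is a set of compact generators of $\mathrm{D}(\Gr{A})$ and the property holds there, Proposition~\ref{proposition: RTJ properties} concludes. The main obstacle I anticipate is the coproduct step of the RTJ verification: the duality $(-)^\vee$ converts coproducts into products, so one must check that the natural map $(\prod_i M_i^\vee) \overset{\mathbf{L}}{\otimes}_{\mathcal A} N \to \prod_i (M_i^\vee \overset{\mathbf{L}}{\otimes}_{\mathcal A} N)$ is compatible with $\prod_i \op{Hom}_A(M_i, N)$ on the target side, which is where the finiteness consequences of delightfulness (Ext-finiteness, Noetherianness, and $\chi^\circ$) will have to be carefully deployed.
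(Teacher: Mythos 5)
Your construction of $\eta$ and the reduction to compact generators via RTJ is the same strategy the paper uses, but you route the key intermediate quasi-isomorphism
\[\mathbf{R}Q_{A \otimes_k A^\opp}\Delta \overset{\mathbf{L}}{\otimes}_{\mathcal A} N \;\simeq\; N\]
slightly differently. You go through $\beta^l_\Delta$ (Proposition~\ref{proposition: when beta is an isomorphism}) together with the $Q$-projection formula of Proposition~\ref{proposition: projection formula} and $\Delta \otimes_{\mathcal A} N \cong N$, whereas the paper instead invokes $\bigstar$ (Proposition~\ref{proposition: vanishing of tensor}) to kill the $\mathbf{R}\tau$-term in the localization triangle for $\Delta$ after tensoring with $N$. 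Since the paper's proof of $\bigstar$ itself passes through the $\tau$-version of the projection formula and the $\beta$-isomorphisms, your version is essentially an unpacking that bypasses the $\bigstar$ packaging; it buys nothing extra but is a valid and arguably cleaner rephrasing. The check on $M = A(x)$ you sketch is fine: after the identifications $A(x)^\vee \cong \mathbf{R}Q_{A^\opp}A(-x)$ and the displayed quasi-isomorphism, both sides reduce to the weight $-x$ slice of $N$.

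Your concern about the coproduct axiom in the RTJ step is well-placed, and it is worth noting that the paper asserts RTJ-ness without comment. Since $(-)^\vee$ converts coproducts to products, one must justify that the comparison map
\[\Bigl(\prod_i M_i^\vee\Bigr) \overset{\mathbf{L}}{\otimes}_{\mathcal A} N \longrightarrow \prod_i \Bigl(M_i^\vee \overset{\mathbf{L}}{\otimes}_{\mathcal A} N\Bigr)\]
is a quasi-isomorphism in this setting; this is not formal, and the same issue arises for the evaluation map $\mathbf{R}\underline{\op{Hom}}_A(M, X) \overset{\mathbf{L}}{\otimes}_{\mathcal A} N \to \mathbf{R}\underline{\op{Hom}}_A(M, X \overset{\mathbf{L}}{\otimes}_{\mathcal A} N)$ viewed on its own. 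You have correctly identified where the finiteness hypotheses must intervene; a complete proof should actually supply that argument rather than flag it as an anticipated obstacle.
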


\begin{proof}
  First, note that we have the natural map 
  \begin{displaymath}
    M^\vee \overset{\mathbf{L}}{\otimes}_{\mathcal A} N \to \mathbf{R}\underline{\op{Hom}}_A(M, \mathbf{R}Q_{A \otimes_k A^\opp} \Delta \overset{\mathbf{L}}{\otimes}_{\mathcal A} N).
  \end{displaymath}
  For \(M = A(x)\), we see this map is a quasi-isomorphism using the fact that \(A\) satisfies \(\bigstar\) from Proposition~\ref{proposition: vanishing of tensor}. 
  Since \(A\) satisfies \(\bigstar\), the map \(N \cong \Delta \otimes_{\mathcal A} N \to \mathbf{R}Q_{A \otimes_k A^\opp} \Delta \overset{\mathbf{L}}{\otimes}_{\mathcal A} N\) is a quasi-isomorphism. So the map  
  \begin{displaymath}
    \mathbf{R}\underline{\op{Hom}}_A(M, N) \to \mathbf{R}\underline{\op{Hom}}_A(M, \mathbf{R}Q_{A \otimes_k A^\opp} \Delta \overset{\mathbf{L}}{\otimes}_{\mathcal A} N)
  \end{displaymath}
  is also a quasi-isomorphism. Combining the two gives the desired quasi-isomorphism for \(M = A(x)\). But the condition \(\eta\) is a quasi-isomorphism is RTJ in \(M\) so is true for all \(M\) by Proposition~\ref{proposition: RTJ properties}
\end{proof}

\subsection{Products} \label{section: products}

\begin{definition}
  For a finitely generated, connected graded \(k\)-algebra, \(A\), let \(\hinj{\Gr{A}}\) be the full dg-subcategory of \(C(\Gr{A})\) with objects the K-injective complexes of Spaltenstein \cite{Spaltenstein}. Similarly, we let \(\hinj{\QGr{A}}\) be the full dg-subcategory of \(C(\QGr{A})\) with objects the K-injective complexes.
\end{definition}

\begin{lemma} \label{lemma: omega of Kinj is Kinj}
  The functor 
  \begin{displaymath}
    \omega : \hinj{\QGr{A}} \to \hinj{\Gr{A}}
  \end{displaymath}
  is well-defined. Moreover, \(H^0(\omega)\) is an equivalence with its essential image. 
\end{lemma}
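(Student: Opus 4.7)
The plan is to verify two assertions: first that $\omega$ sends K-injective complexes of $\QGr{A}$ to K-injective complexes of $\Gr{A}$, and second that $H^0(\omega)$ is fully faithful, which is equivalent to being an equivalence onto its essential image.

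For the first, I would leverage the adjunction $\pi \dashv \omega$ extended termwise to chain complexes and then passed to the homotopy categories $K(\Gr{A})$ and $K(\QGr{A})$. Given an acyclic complex $M$ of $\Gr{A}$ and a K-injective complex $I$ of $\QGr{A}$, the exactness of the Serre quotient functor $\pi$ implies that $\pi M$ is acyclic, and K-injectivity of $I$ then gives
\[\op{Hom}_{K(\Gr{A})}(M, \omega I) \cong \op{Hom}_{K(\QGr{A})}(\pi M, I) = 0,\]
so $\omega I$ is K-injective.

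For the second, I would use that for K-injective $I, J$ in $C(\QGr{A})$, $H^0$ of the Hom complex in $\hinj{\QGr{A}}$ computes $\op{Hom}_{\mathrm{D}(\QGr{A})}(I,J)$, and similarly on the $\Gr{A}$ side thanks to the first step. Fully faithfulness of $H^0(\omega)$ then reduces to showing that the map
\[\op{Hom}_{\mathrm{D}(\QGr{A})}(I, J) \to \op{Hom}_{\mathrm{D}(\Gr{A})}(\omega I, \omega J)\]
induced by $\omega$ is a bijection. Since $\pi$ is exact, $\mathbf{L}\pi = \pi$, and by the first step $\mathbf{R}\omega J = \omega J$. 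The derived adjunction $\mathbf{L}\pi \dashv \mathbf{R}\omega$ then yields
\[\op{Hom}_{\mathrm{D}(\Gr{A})}(\omega I, \omega J) \cong \op{Hom}_{\mathrm{D}(\QGr{A})}(\pi \omega I, J) \cong \op{Hom}_{\mathrm{D}(\QGr{A})}(I, J),\]
where the last isomorphism uses $\pi \omega \cong \id$, the statement that $\omega$ is fully faithful at the abelian level.

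The main obstacle is verifying the composite identification actually agrees with the map induced by $H^0(\omega)$. This is a diagram chase using the triangle identities for $\pi \dashv \omega$ and the compatibility of the derived adjunction with the naive one on K-injectives; both facts reduce to the exactness of $\pi$ together with $\pi \omega \cong \id$, so nothing substantive remains beyond this bookkeeping.
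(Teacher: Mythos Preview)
Your proposal is correct and follows essentially the same approach as the paper: both use the adjunction $\pi \dashv \omega$ together with exactness of $\pi$ to verify K-injectivity is preserved, and then apply the adjunction again along with $\pi\omega \cong \id$ to obtain full faithfulness. The paper's argument is simply a terser version of yours, writing the adjunction isomorphism directly at the level of the dg-Hom complexes rather than passing through the derived categories.
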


\begin{proof}
  For the first statement, we just need to check that \(\omega\) takes K-injective complexes to K-injective complexes. 
  This is clear from the fact that \(\omega\) is right adjoint to \(\pi\), which is exact.
  
  To see this is fully faithful, we recall that \(\pi \omega \cong \op{Id}\) so
  \[\hinj{\Gr{A}}(\omega M, \omega N) \cong \hinj{\QGr{A}}(\pi\omega M, N) \cong \hinj{\QGr{A}}(M,N).\]
\end{proof}

\begin{remark} \label{remark: enhancement of DQGr}
  Using Lemma~\ref{lemma: omega of Kinj is Kinj}, we can either use \(\hinj{\QGr{A}}\) or its image under \(\omega\) in \(\hinj{\Gr{A}}\) as an enhancement of \(\mathrm{D}(\QGr{A})\). 
\end{remark}

Consider the full dg-subcategory of \(\hinj{\QGr{A \otimes_k B}}\) consisting of the objects
\[\pi_{A \otimes_k B} (A(u) \otimes_k B(v))\]
for all \(u,v\). Denote this subcategory by \(\mathcal E\).

\begin{lemma} \label{lemma: another model for QA otimes QB}
  If \(A\) and \(B\) are both Ext-finite, left Noetherian, and right Noetherian, then the dg-category \(\mathcal E\) is naturally quasi-equivalent to \(Q \A \otimes_k Q \B\).
\end{lemma}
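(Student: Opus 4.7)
The plan is to build an explicit dg-functor $F \colon Q\A \otimes_k Q\B \to \mathcal{E}$ via external tensor product and verify it is a quasi-equivalence. On objects, send the pair $(Q_A I_A(u), Q_B I_B(v))$ to a chosen K-injective model for $\pi_{A \otimes_k B}(A(u) \otimes_k B(v))$, for instance a K-injective resolution of $Q_A I_A(u) \otimes_k Q_B I_B(v)$ regarded as a complex of bi-bi modules. Quasi-essential surjectivity is then immediate since $\mathcal{E}$ was defined to consist precisely of such objects, parametrized by $(u,v) \in \Z^2$. On morphisms, set $F(f \otimes g)$ equal to the map induced by the external tensor product $f \otimes_k g$ on the chosen K-injective models; functoriality and compatibility with the differential follow formally from the monoidal structure on $\CH{k}$.

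Quasi-fully faithfulness reduces to comparing morphism complexes on both sides. On the source side, K-injectivity of the $Q_A I_A(\bullet)$'s (via Lemma~\ref{lemma: omega of Kinj is Kinj}), the adjunction $\pi_A \dashv \omega_A$, and the identification $\op{Hom}_{\Gr{A}}(A(x), M) \cong M_{-x}$ give
\[
(Q\A \otimes_k Q\B)((u_1,v_1),(u_2,v_2)) \cong (\mathbf{R}Q_A A(u_2))_{-u_1} \otimes_k (\mathbf{R}Q_B B(v_2))_{-v_1},
\]
using exactness of $\otimes_k$ over a field. The analogous calculation for $A \otimes_k B$ gives
\[
\mathcal{E}(F(u_1,v_1), F(u_2,v_2)) \cong \mathbf{R}Q_{A \otimes_k B}(A(u_2) \otimes_k B(v_2))_{-u_1,-v_1}.
\]
The key ingredients to match these are Corollary~\ref{corollary: relation on Qs} together with Proposition~\ref{proposition: bi-torsion is a composition} (applicable by left Noetherianity and Ext-finiteness) to rewrite $\mathbf{R}Q_{A \otimes_k B} \cong \mathbf{R}Q_A \circ \mathbf{R}Q_B$; Proposition~\ref{proposition: tensor and op properties of ext-finite} for Ext-finiteness of $A \otimes_k B$; and Proposition~\ref{proposition: derived Q commutes with coproducts} to push each $\mathbf{R}Q$ through the external tensor product weight-by-weight, yielding the K\"unneth-type identification
\[
\mathbf{R}Q_{A \otimes_k B}(A(u_2) \otimes_k B(v_2)) \cong \mathbf{R}Q_A A(u_2) \otimes_k \mathbf{R}Q_B B(v_2).
\]
Extracting the $(-u_1,-v_1)$ weight piece then matches the source side.

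The main obstacle is realizing these abstract identifications through the concrete external tensor product map defining $F$, rather than only as a zigzag of quasi-isomorphisms in the derived category. Concretely, one must verify that the canonical comparison $Q_A I_A(u) \otimes_k Q_B I_B(v) \to \mathbf{R}Q_{A \otimes_k B}(A(u) \otimes_k B(v))$ (after K-injective replacement on the right) is a quasi-isomorphism and is natural in $(u,v)$ with respect to the morphisms of $Q\A$ and $Q\B$. The left and right Noetherian and Ext-finiteness hypotheses enter precisely here, keeping the weight-by-weight decomposition of the external tensor product compatible with the derived torsion functors $\mathbf{R}Q_A$ and $\mathbf{R}Q_B$, which is what allows the K\"unneth identification to be realized at the cochain level by the external tensor product rather than only up to a zigzag.
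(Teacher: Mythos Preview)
Your proposal is correct and follows essentially the same strategy as the paper: both route through the external tensor product $\otimes_k \colon \ldgGrMod{A} \otimes_k \ldgGrMod{B} \to \ldgGrMod{A \otimes_k B}$ and establish the K\"unneth-type identification $\mathbf{R}Q_{A \otimes_k B}(A(u) \otimes_k B(v)) \simeq \mathbf{R}Q_A A(u) \otimes_k \mathbf{R}Q_B B(v)$ using Proposition~\ref{proposition: bi-torsion is a composition} and commutation of $\mathbf{R}Q$ with coproducts.

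The one execution difference worth noting is how the zigzag obstacle you flag is handled. You aim for a single dg-functor $F$ landing directly in $\mathcal{E}$, which forces you to lift $f \otimes_k g$ through a K-injective replacement and then argue that this lift realizes the abstract identifications on the nose. The paper instead lets the external tensor product land in $\ldgGrMod{A \otimes_k B}$ with image the objects $\mathbf{R}Q_A A(u) \otimes_k \mathbf{R}Q_B B(v)$, and then compares Hom spaces with those of $\mathcal{E}$ through a five-arrow commutative diagram passing through the intermediate node $\mathbf{R}Q_{A \otimes_k B}(\mathbf{R}Q_A A(u) \otimes_k \mathbf{R}Q_B B(v))$; each arrow is checked separately (using freeness of $A(u)\otimes_k B(v)$, orthogonality of $\mathbf{R}\tau$ and $\mathbf{R}Q$, and that the relevant cones are $\tau_{A\otimes_k B}$-torsion). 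This yields the quasi-equivalence as a zigzag rather than a single strict dg-functor, so the obstacle you identify simply does not arise.
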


\begin{proof}
  Recall that \(Q \A\) is the full dg-subcategory of \(\ldgGrMod{A}\) consisting of \(Q_{A}\) applied to injective resolutions of \(A(u)\), loosely denoted by \(\mathbf{R}Q_A A(u)\), and similarly for \(Q \B\). We have the exact functor
  \begin{displaymath}
    \otimes_k : \ldgGrMod{A} \otimes_k \ldgGrMod{B} \to \ldgGrMod{A \otimes_k B}
  \end{displaymath}
  which tensors a pair of modules over \(k\) to yield a bimodule. First consider the triangle
  \begin{gather*}
    \mathbf{R}\tau_{A \otimes B} (\mathbf{R} Q_A A(u) \otimes_k \mathbf{R} Q_B B(v) ) \to \mathbf{R} Q_A A(u) \otimes_k \mathbf{R} Q_B B(v) \\ \to \mathbf{R}Q_{A \otimes B} (\mathbf{R} Q_A A(u) \otimes_k \mathbf{R} Q_B B(v) ).
  \end{gather*}
  By Proposition~\ref{proposition: bi-torsion is a composition}, we have 
  \begin{displaymath}
    \mathbf{R}Q_{A \otimes B} (\mathbf{R} Q_A A(u) \otimes_k \mathbf{R} Q_B B(v) ) \cong \mathbf{R} Q_A \left( \mathbf{R}Q_B \left( \mathbf{R} Q_A A(u) \otimes_k \mathbf{R} Q_B B(v) \right) \right).
  \end{displaymath}
  Since \(\mathbf{R}\tau_B\) commutes with coproducts, we have a natural quasi-isomorphism
  \begin{gather*}
    \mathbf{R} Q_A \left( \mathbf{R}Q_B \left( \mathbf{R} Q_A A(u) \otimes_k \mathbf{R} Q_B B(v) \right) \right) \cong \mathbf{R}Q_A^2 A(u) \otimes_k \mathbf{R}Q_B^2 B(v) \\ \cong \mathbf{R}Q_A A(u) \otimes_k \mathbf{R}Q_B B(v). 
  \end{gather*}
  Thus, 
  \begin{displaymath}
    \mathbf{R} Q_A A(u) \otimes_k \mathbf{R} Q_B B(v) \to \mathbf{R}Q_{A \otimes B} (\mathbf{R} Q_A A(u) \otimes_k \mathbf{R} Q_B B(v) )
  \end{displaymath}
  is quasi-isomorphism for all \(u,v\) with \(\tau_{A \otimes_k B}\) torsion cone. The same consideration shows that the map 
  \begin{displaymath}
    A(u) \otimes_k A(v) \to \mathbf{R} Q_A A(u) \otimes_k \mathbf{R} Q_B B(v)
  \end{displaymath}
  induces a quasi-isomorphism
  \begin{displaymath}
    \mathbf{R}Q_{A \otimes B}( A(u) \otimes_k B(v) ) \to \mathbf{R}Q_{A \otimes B} (\mathbf{R} Q_A A(u) \otimes_k \mathbf{R} Q_B B(v) )
  \end{displaymath}
  with \(\tau_{A \otimes_k B}\) torsion kernel. Now we check that these morphisms induce quasi-isomorphisms on the morphism spaces giving our desired quasi-equivalence. We have a commutative diagram
  \begin{center}
    \scalebox{0.8}{
      \begin{tikzpicture}[scale=.6,level/.style={->,>=stealth,thick}]
	\node (a) at (-5,5) {\(\op{Hom}(\mathbf{R} Q_A A(u) \otimes_k \mathbf{R} Q_B B(v),\mathbf{R} Q_A A(x) \otimes_k \mathbf{R} Q_B B(y))\)};
	\node (b) at (6,2.5) {\(\op{Hom}(A(u) \otimes_k B(v),\mathbf{R} Q_A A(x) \otimes_k \mathbf{R} Q_B B(y))\)};
	\node (c) at (-7,0) {\(\op{Hom}(\mathbf{R} Q_A A(u) \otimes_k \mathbf{R} Q_B B(v),\mathbf{R}Q_{A \otimes_k B} (\mathbf{R} Q_A A(x) \otimes_k \mathbf{R} Q_B B(y)))\)};
	\node (d) at (5,-2.5) {\(\op{Hom}(A(u) \otimes_k B(v),\mathbf{R}Q_{A \otimes_k B} (\mathbf{R} Q_A A(x) \otimes_k \mathbf{R} Q_B B(y)))\)};
	\node (e) at (-5,-5) {\(\op{Hom}(\mathbf{R}Q_{A \otimes_k B} (\mathbf{R} Q_A A(u) \otimes_k \mathbf{R} Q_B B(v)),\mathbf{R}Q_{A \otimes_k B} (\mathbf{R} Q_A A(x) \otimes_k \mathbf{R} Q_B B(y)))\)};
	\draw[level] (a) -- node[left] {\(a\)} (c) ;
	\draw[level] (e) -- node[left] {\(b\)} (c) ;
	\draw[level] (a) -- node[above] {\(c\)} (b) ;
	\draw[level] (b) -- node[right] {\(d\)} (d) ;
	\draw[level] (c) -- node[below] {\(e\)} (d) ;
    \end{tikzpicture} }
  \end{center}
  and we want to know first that \(a\) and \(b\) are quasi-isomorphisms. We know that \(b\) is a quasi-isomorphism since \(\mathbf{R}\tau_{A \otimes_k B}\) is left orthogonal to \(\mathbf{R}Q_{A \otimes_k B}\) so we only need to check \(a\). Since \(A(u) \otimes_k B(v)\) is free and 
  \begin{displaymath}
    \mathbf{R} Q_A A(u) \otimes_k \mathbf{R} Q_B B(v) \to \mathbf{R}Q_{A \otimes B} (\mathbf{R} Q_A A(u) \otimes_k \mathbf{R} Q_B B(v) )
  \end{displaymath}
  is a quasi-isomorphism, \(d\) is a quasi-isomorphism. Since \(\mathbf{R}Q_A\) and \(\mathbf{R}Q_B\) commute with coproducts, using tensor-Hom adjunction shows that \(c\) is a quasi-isomorphism. Finally, since the cone over the map
  \begin{displaymath}
    A(u) \otimes_k A(v) \to \mathbf{R} Q_A A(u) \otimes_k \mathbf{R} Q_B B(v)
  \end{displaymath}
  is annihilated by \(\tau_{A \otimes_k B}\), we see that \(e\) is also a quasi-isomorphism. This implies that \(a\) is a quasi-isomorphism.
  By an analogous argument, the endomorphisms of \(\mathbf{R}Q_{A \otimes B} (A(u) \otimes_k B(v))\) and \(\mathbf{R}Q_{A \otimes B} (\mathbf{R} Q_A A(u) \otimes_k \mathbf{R} Q_B B(v) )\) are quasi-isomorphic.
\end{proof}

\subsection{The quasi-equivalence}

Now we turn to the main result. 

\begin{theorem} \label{theorem: derived morita for NCP}
  Let \(k\) be a field. Let \(A\) and \(B\) be connected graded \(k\)-algebras. If \(A\) and \(B\) form a delightful couple, then there is a natural quasi-equivalence 
  \begin{displaymath}
    F : \hinj{\QGr{A^\opp \otimes_k B}} \to \RHomc{ \hinj{\QGr{A}}, \hinj{\QGr{B}} }
  \end{displaymath}
  such that for an object \(P\) of \(\mathrm{D}(\QGr{A^\opp \otimes_k B})\), the exact functor \(H^0(F(P))\) is isomorphic to 
  \begin{displaymath}
    \Phi_P(M) :=  \pi_B \left( \mathbf{R}\omega_{A^\opp \otimes_k B} P \overset{\mathbf{L}}{\otimes}_{\mathcal A} \mathbf{R}\omega_A M \right).
  \end{displaymath}
\end{theorem}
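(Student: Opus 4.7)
The plan is to assemble the quasi-equivalence $F$ as a composition of several previously established quasi-equivalences, then to verify that the induced exact functor on $\mathrm{D}(\QGr{A})$ matches the stated integral-transform formula on generators and extends using the RTJ machinery.

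First I would establish that, under delightfulness, the twisting modules $\mathbf{R}Q_A A(x)$ compactly generate $\mathrm{D}(\QGr{A})$; combined with Lemma~\ref{lemma: omega of Kinj is Kinj} and the Lunts--Orlov uniqueness of enhancements this yields a quasi-equivalence $\hinj{\QGr{A}} \simeq \hproj{Q\A}$, and likewise for $B$. For the product, I would invoke Lemma~\ref{lemma: another model for QA otimes QB} applied to the couple $(A^\opp, B)$ to identify $Q(\A^\opp) \otimes Q\B$ with the full dg-subcategory $\mathcal E \subseteq \hinj{\QGr{A^\opp \otimes_k B}}$ spanned by $\pi_{A^\opp \otimes_k B}(A(u) \otimes_k B(v))$, and check that these same objects compactly generate $\mathrm{D}(\QGr{A^\opp \otimes_k B})$ (an analogous Artin--Zhang argument), giving $\hinj{\QGr{A^\opp \otimes_k B}} \simeq \hproj{Q(\A^\opp) \otimes Q\B}$.

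With these enhancements in place, the quasi-equivalence $F$ is produced by the chain
\begin{align*}
  \RHomc{\hinj{\QGr{A}}, \hinj{\QGr{B}}}
    &\simeq \RHomc{\hproj{Q\A}, \hproj{Q\B}}\\
    &\simeq \hproj{(Q\A)^\opp \otimes Q\B}\\
    &\simeq \hproj{Q(\A^\opp) \otimes Q\B}\\
    &\simeq \hinj{\QGr{A^\opp \otimes_k B}},
\end{align*}
where the first step is Corollary~\ref{corollary: Toen}, the second is Theorem~\ref{theorem: Toen}, and the third is the duality of Corollary~\ref{corollary: duality is a duality}. Reading the chain in reverse defines $F$.

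It remains to identify $H^0(F(P))$ with $\Phi_P$. I would test the formula on the generators $M = \mathbf{R}Q_A A(x)$. Unwinding Toën's construction, the action of $F(P)$ on the generator is given by tensoring $P$ (regarded as a bimodule via the chain above) with the dual $(\mathbf{R}Q_A A(x))^\vee$, and Lemma~\ref{lemma: trace map} replaces this with the tensor $P \otimes^{\mathbf{L}}_{\mathcal A} \mathbf{R}Q_A A(x)$. The projection formula of Proposition~\ref{proposition: projection formula} lets one pass $\mathbf{R}\omega$ and $\pi_B$ across the tensor, while the $\bigstar$-vanishing of Proposition~\ref{proposition: vanishing of tensor} kills the $A^\opp$-torsion contribution, matching this with $\pi_B(\mathbf{R}\omega_{A^\opp \otimes_k B} P \otimes^{\mathbf{L}}_{\mathcal A} \mathbf{R}\omega_A M)$. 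Since ``the natural comparison map is a quasi-isomorphism'' is an RTJ condition in $M$, Proposition~\ref{proposition: RTJ properties} extends the identification from the generators to all of $\hinj{\QGr{A}}$.

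The main obstacle will be Step 3: tracking through the several distinct module structures involved. In particular, one must match the tensor of $P$ over the enriched ringoid $Q(\A^\opp) \otimes Q\B$, which is what Toën's theorem really delivers, with the tensor of $\mathbf{R}\omega_{A^\opp \otimes_k B}P$ over the ringoid $\mathcal A$ appearing in the integral transform. Here delightfulness is used in full: Proposition~\ref{proposition: when beta is an isomorphism} and Proposition~\ref{proposition: bi-torsion is a composition} are what allow the global $Q_{A^\opp \otimes_k B}$ to be decoupled as an iteration of $Q_A$ and $Q_B$, and Proposition~\ref{proposition: vanishing of tensor} is what forces the torsion contamination to vanish so that $\pi_B$ really isolates the sheafification. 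Verifying that these reductions are natural in a way compatible with the dg-enrichments coming out of Toën's correspondence is the most delicate piece of bookkeeping.
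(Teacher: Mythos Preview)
Your proposal is correct and follows essentially the same route as the paper: the chain of quasi-equivalences you write down (Corollary~\ref{corollary: Toen}, then Corollary~\ref{corollary: duality is a duality}, then Lemma~\ref{lemma: another model for QA otimes QB} plus compact generation) is exactly the paper's argument, and the identification of $H^0(F(P))$ via Lemma~\ref{lemma: trace map} and Proposition~\ref{proposition: vanishing of tensor} on generators is likewise the same. One small remark: invoking Lunts--Orlov uniqueness to obtain $\hinj{\QGr{A}}\simeq\hproj{Q\A}$ is overkill---the paper gets the analogous quasi-equivalence directly from the quasi-fully-faithful inclusion of generators and a standard compact-generation argument (cf.\ \cite[Theorem~5.1]{Dyckerhoff}), and the same works for your first step.
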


\begin{proof}
  Applying Corollary~\ref{corollary: Toen}, it suffices to provide a quasi-equivalence
  \begin{displaymath}
    G : \hinj{\QGr{A^\opp \otimes_k B}} \to \hproj{ (Q \mathcal A)^\opp \otimes Q \mathcal B}
  \end{displaymath}
  Using Corollary~\ref{corollary: duality is a duality}, we have a quasi-equivalence
  \begin{displaymath}
    \hproj{ (Q \mathcal A)^\opp \otimes Q \mathcal B} \cong \hproj{ Q \mathcal A^\opp \otimes Q \mathcal B}. 
  \end{displaymath}
  From Lemma~\ref{lemma: another model for QA otimes QB} we have a quasi-fully faithful functor 
  \begin{displaymath}
    Q \mathcal A^\opp \otimes Q \mathcal B \to \hinj{\QGr{A^\opp \otimes_k B}}. 
  \end{displaymath}
  This gives a functor 
  \begin{displaymath}
    \hinj{\QGr{A^\opp \otimes_k B}} \to \hproj{Q \mathcal A^\opp \otimes Q \mathcal B}
  \end{displaymath}
  which is a quasi-equivalence by a standard argument, see e.g. \cite[Theorem 5.1]{Dyckerhoff}.
  
  Tracing out the quasi-equivalences, one just needs to manipulate 
  \begin{align*}
    \op{Hom} (\mathbf{R}Q_A A(x)^\vee \otimes_k \mathbf{R}Q_B B(y), P) & \cong \op{Hom} ( \mathbf{R}Q_B B(y) , \op{Hom}( \mathbf{R}Q_A A(x)^\vee, \mathbf{R}\omega_{A^\opp \otimes_k B} P)) \\
    & \cong \op{Hom} ( \mathbf{R}Q_B B(y) , \mathbf{R}\omega_{A^\opp \otimes_k B} P \overset{\mathbf{L}}{\otimes}_{\mathcal A} \mathbf{R}Q_A A(x) ) 
  \end{align*}
  using Propostion~\ref{proposition: vanishing of tensor} and Lemma~\ref{lemma: trace map}. This says that the induced continuous functor is
  \begin{displaymath}
    M \mapsto \pi_B \left( \mathbf{R}\omega_{A^\opp \otimes_k B} P \overset{\mathbf{L}}{\otimes}_{\mathcal A} \mathbf{R}\omega_A M \right). 
  \end{displaymath}
\end{proof}

The following statement is now a simple application of Theorem~\ref{theorem: derived morita for NCP} and results of \cite{Lunts-Orlov}. 

\begin{corollary} \label{corollary: NCP morita}
  Let \(A\) and \(B\) be a delightful couple of connected graded \(k\)-algebras with \(k\) a field. Assume that there exists an equivalence
  \begin{displaymath}
    f : \mathrm{D} (\QGr{A}) \to \mathrm{D} (\QGr{B}).
  \end{displaymath}
  Then there exists an object \(P \in D ( \QGr{A^\opp \otimes_k B} )\) such that 
  \begin{displaymath}
    \Phi_P : \mathrm{D} ( \QGr{A}) \to \mathrm{D} (\QGr{B} )
  \end{displaymath}
  is an equivalence.
\end{corollary}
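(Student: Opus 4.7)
The plan is to lift the triangulated equivalence $f$ to a quasi-equivalence of dg-enhancements, then transport it along the quasi-equivalence of Theorem~\ref{theorem: derived morita for NCP} to produce the kernel $P$.

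First, I would note that $\hinj{\QGr{A}}$ and $\hinj{\QGr{B}}$ are dg-enhancements of $\mathrm{D}(\QGr{A})$ and $\mathrm{D}(\QGr{B})$ respectively (using K-injective resolutions, cf.\ Remark~\ref{remark: enhancement of DQGr}). Since $A$ and $B$ are left Noetherian and Ext-finite, the categories $\QGr{A}$ and $\QGr{B}$ are Grothendieck abelian categories with a set of compact generators (namely the images of the twists $A(n)$ and $B(n)$), so their derived categories fall into the framework where Lunts--Orlov's uniqueness of dg-enhancements applies. Consequently, the equivalence $f$ lifts to an isomorphism
\[
\widetilde{f} \in [\hinj{\QGr{A}}, \hinj{\QGr{B}}]
\]
in $\Ho{\dgcat{k}}$ whose image under $H^0$ is naturally isomorphic to $f$.

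Next, by Theorem~\ref{theorem: Toen}, the class $\widetilde{f}$ is represented by an object $Q \in H^0(\RHomc{\hinj{\QGr{A}}, \hinj{\QGr{B}}})$. By Theorem~\ref{theorem: derived morita for NCP}, there is a natural quasi-equivalence
\[
F : \hinj{\QGr{A^\opp \otimes_k B}} \to \RHomc{\hinj{\QGr{A}}, \hinj{\QGr{B}}},
\]
and hence $Q$ comes from an object $P$ of $\mathrm{D}(\QGr{A^\opp \otimes_k B})$ with $F(P) \cong Q$. By the explicit formula in Theorem~\ref{theorem: derived morita for NCP}, the integral transform
\[
\Phi_P(M) = \pi_B\left(\mathbf{R}\omega_{A^\opp \otimes_k B} P \overset{\mathbf{L}}{\otimes}_{\mathcal{A}} \mathbf{R}\omega_A M\right)
\]
is isomorphic as a triangulated functor to $H^0(F(P)) \cong H^0(Q) \cong f$, and is therefore an equivalence.

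The main obstacle is the appeal to Lunts--Orlov uniqueness: one must verify that their theorem applies to $\mathrm{D}(\QGr{A})$. This requires knowing the category is well-generated or compactly generated in the appropriate sense, which is exactly where delightfulness enters---the Noetherian and $\chi^\circ$ hypotheses ensure that the twists $\pi_A A(n)$ form a set of compact generators and that the enhancement $\hinj{\QGr{A}}$ is sufficiently well-behaved for the uniqueness result to apply. The remaining steps are bookkeeping: identifying the object produced by T\"oen's correspondence with one in $\mathrm{D}(\QGr{A^\opp \otimes_k B})$ via the quasi-equivalence of the main theorem, and reading off the integral transform formula.
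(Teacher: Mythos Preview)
Your approach is essentially the paper's: invoke Lunts--Orlov to pass from the triangulated equivalence to a quasi-equivalence of enhancements, then apply Theorem~\ref{theorem: derived morita for NCP} to extract the kernel \(P\).

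One small overstatement worth flagging: you assert that Lunts--Orlov gives a lift \(\widetilde{f}\) with \(H^0(\widetilde{f}) \cong f\). Their uniqueness theorem guarantees only that the two enhancements are quasi-equivalent in \(\Ho{\dgcat{k}}\); it does not promise that a given triangulated equivalence lifts. The paper is careful here and only claims existence of \emph{some} \(F \in [\hinj{\QGr{A}}, \hinj{\QGr{B}}]\) with \(H^0(F)\) an equivalence, which is all the corollary requires. Since the statement asks only for the existence of a \(P\) with \(\Phi_P\) an equivalence (not \(\Phi_P \cong f\)), your conclusion is unaffected, but the intermediate claim should be weakened accordingly.
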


\begin{proof}
  Applying \cite[Theorem 1]{Lunts-Orlov} we know there is a quasi-equivalence between the unique enhancements, i.e. there is an \( F \in [ \hinj{ \QGr{A}}, \hinj{ \QGr{B}} ]\) giving an equivalence
  \begin{displaymath}
    H^0(F) : H^0(\hinj{ \QGr{A} }) = \mathrm{D}(\QGr{A}) \to H^0(\hinj{ \QGr{B} }) = \mathrm{D}(\QGr{B}).
  \end{displaymath}
  Then, by Theorem~\ref{theorem: derived morita for NCP}, there exists a \(P \in \mathrm{D}(\QGr{A^\opp \otimes_k B})\) such that \(\Phi_P = H^0(F)\). 
\end{proof}

We wish to identify the kernels as objects of the derived category of an honest noncommutative projective scheme.
In general, one can only hope that kernels obtained as above are objects of the derived category of a noncommutative (bi)projective scheme.
However, we have the following special case in which we can collapse the \(\Z^2\)-grading to a \(\Z\)-grading.

\begin{corollary} \label{corollary: NCP morita degree 1}
  Let \(A\) and \(B\) be a delightful couple of connected graded \(k\)-algebras with \(k\) a field that are both generated in degree one.
  Assume that there exists an equivalence
  \begin{displaymath}
    f : \mathrm{D} (\QGr{A}) \to \mathrm{D} (\QGr{B}).
  \end{displaymath}
  Then there exists an object \(P \in D ( \QGr{A^\opp \times_k B} )\) that induces an equivalence
  \[\begin{tikzcd}[row sep=tiny]
  \mathrm{D}(\QGr{A}) \arrow{r}& \mathrm{D}(\QGr{B})\\
  M \arrow[mapsto]{r} & \pi_B\left(\mathbb{V}(P) \otimes^\mathbf{L} \mathbf{R}\omega_A M\right)
  \end{tikzcd}\]
\end{corollary}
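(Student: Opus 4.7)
The plan is to combine Corollary~\ref{corollary: NCP morita}, which produces a kernel on $\QGr{A^\opp \otimes_k B}$, with Van Rompay's equivalence (Theorem~\ref{theorem: Van Rompay}), which transports such a kernel to the Segre product.

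First, apply Corollary~\ref{corollary: NCP morita} to the given equivalence $f$. Since $A$ and $B$ form a delightful couple, this produces an object $P' \in \mathrm{D}(\QGr{A^\opp \otimes_k B})$ whose integral transform
\[
\Phi_{P'}(M) \;\cong\; \pi_B\bigl(\mathbf{R}\omega_{A^\opp \otimes_k B} P' \otimes^{\mathbf{L}}_{\mathcal{A}} \mathbf{R}\omega_A M\bigr)
\]
is isomorphic to $f$, via the identification supplied by Theorem~\ref{theorem: derived morita for NCP}.

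Next, observe that the opposite of a connected graded algebra generated in degree one is again generated in degree one, so both $A^\opp$ and $B$ are generated in degree one. By Proposition~\ref{proposition: segre product of generated in degree 1 is generated in degree 1}, the Segre product $A^\opp \times_k B$ is then a finitely generated, connected graded $k$-algebra generated in degree one, and in particular $\QGr{A^\opp \times_k B}$ is well-defined. Theorem~\ref{theorem: Van Rompay}, applied to the pair $(A^\opp, B)$, yields an equivalence of abelian categories
\[
\mathbb{V} \colon \QGr{A^\opp \times_k B} \to \QGr{A^\opp \otimes_k B},
\]
which extends to an equivalence of derived categories. Define $P$ to be the essentially unique object of $\mathrm{D}(\QGr{A^\opp \times_k B})$ with $\mathbb{V}(P) \cong P'$.

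Finally, substituting $\mathbb{V}(P)$ for $P'$ in the formula above, and adopting the convention (as in Theorem~\ref{theorem: derived morita for NCP}) of suppressing the fully faithful $\mathbf{R}\omega_{A^\opp \otimes_k B}$ before tensoring a bimodule against a module, we obtain the equivalence
\[
M \mapsto \pi_B\bigl(\mathbb{V}(P) \otimes^{\mathbf{L}} \mathbf{R}\omega_A M\bigr),
\]
as claimed. The only real subtlety is that Van Rompay's equivalence is stated at the abelian level, and one must check it lifts to the dg-enhancement $\hinj{\QGr{-}}$ in a manner compatible with the tensor product over $\mathcal{A}$. This is essentially formal: $\mathbb{V}$ and its inverse are constructed from the adjunction $\pi \dashv \omega$ together with the honest tensor product $T \otimes_S -$, so the induced derived equivalence transports the bimodule structure, and the formula from the main theorem carries over without modification.
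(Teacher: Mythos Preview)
Your proposal is correct and follows essentially the same route as the paper: invoke Corollary~\ref{corollary: NCP morita} to obtain a kernel over the tensor product, then pull it back through Van~Rompay's equivalence $\mathbb{V}$, noting that $\mathbb{V}$ lifts to a quasi-equivalence of the dg-enhancements. One small inaccuracy: Corollary~\ref{corollary: NCP morita} only furnishes a $P'$ for which $\Phi_{P'}$ is \emph{some} equivalence, not necessarily one isomorphic to the original $f$ (the Lunts--Orlov input gives uniqueness of the enhancement, not a lift of $f$ itself), but this does not affect the argument since the statement only asks for an equivalence.
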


\begin{proof}
  The equivalence \(\mathbb{V}\) of Theorem~\ref{theorem: Van Rompay} extends naturally to a quasi-equivalence
  \[\mathbb{V} \colon \hinj{\QGr{S}} \to \hinj{\QGr{T}}.\]
  Now choose \(P\) such that \(\mathbb{V}(P)\) is homotopy equivalent to the kernel obtained by an application of Corollary~\ref{corollary: NCP morita}, so the desired equivalence is \(\Phi_{\mathbb{V}(P)}\).
\end{proof}

Coming back to Example~\ref{example: ncCY}. We ask the following question. 

\begin{question}
  Fix \(q_{ij} \in \mathbf{C}\). Then two noncommutative projective schemes \(A_q^\phi\) and \(A_q^{\phi^\prime}\) are isomorphic if and only if they are derived equivalent. 
\end{question}

In the commutative case, this is a derived Torelli statement which one can understand via matrix factorizations \cite{OrlovMF} and the Mather-Yau theorem \cite{MatherYau}.  

\bibliographystyle{amsalpha}
\bibliography{biblio}
\end{document}